\documentclass[oneside,11pt]{amsart}
\usepackage{amsmath}
\usepackage{amssymb}
\usepackage{graphicx}
\usepackage{caption}
\usepackage{subcaption}
\usepackage{pinlabel}
\usepackage{tikz}

\newtheorem{thm}{Theorem}[section]
\newtheorem{prop}[thm]{Proposition}
\newtheorem{lem}[thm]{Lemma}

\theoremstyle{definition}
\newtheorem{defn}[thm]{Definition}
\newtheorem{rem}[thm]{Remark}

\newtheorem{exmp}[thm]{Example}
\newtheorem{ques}[thm]{Question}

\newcommand{\abs}[1]{\lvert{#1}\rvert}

\renewcommand{\bar}[1]{\overline{#1}}

\newcommand{\set}[2]{\{\,{#1} \mid {#2} \,\}}
\newcommand{\bigset}[2]{ \bigl\{ \, {#1} \bigm| {#2} \, \bigr\} }
\renewcommand{\emptyset}{\varnothing}

\newcommand{\field}[1]{\mathbb{#1}}

\newcommand{\PP}{\field{P}}

\newcommand{\RR}{\field{R}}

\renewcommand{\implies}{\Rightarrow}

\DeclareMathOperator{\CAT}{CAT}

\newcommand{\ball}[2]{B ( {#1}, {#2} )}%Ball{center}{radius}

 % Closed ball

\newcommand{\nbd}[2]{\mathcal{N}_{#2}({#1})} % Neighborhood{center}{radius}

\newcommand{\Set}[1]{\mathcal{#1}}

\DeclareMathOperator{\Div}{Div}

\DeclareMathOperator{\Cayley}{Cayley}

 %Saturation

\begin{document}

\title[Divergence spectra and Morse boundaries]{Divergence spectra and Morse boundaries of relatively hyperbolic groups}

\author{Hung Cong Tran}
\address{Department of Mathematics\\
 The University of Georgia\\
1023 D. W. Brooks Drive\\
Athens, GA 30605\\
USA}
\email{hung.tran@uga.edu}

\date{\today}

\begin{abstract}
We introduce a new quasi-isometry invariant, called the divergence spectrum, to study finitely generated groups. We compare the concept of divergence spectrum with the other classical notions of divergence and we examine the divergence spectra of relatively hyperbolic groups. We show the existence of an infinite collection of right-angled Coxeter groups which all have exponential divergence but they all have different divergence spectra. We also study Morse boundaries of relatively hyperbolic groups and examine their connection with Bowditch boundaries.
\end{abstract}

\subjclass[2000]{%
20F67, % Hyperbolic groups and nonpositively curved groups
20F65} % Geometric group theory
\maketitle

\section{Introduction}
In \cite{MR1254309}, Gersten defined a quasi-isometry invariant of spaces, called divergence. He used divergence to classify certain 3--manifold groups up to quasi-isometry (see \cite{MR1302334}). The concept of divergence has also been studied by Macura \cite{MR3032700}, Behrstock-Charney \cite{MR2874959}, Duchin-Rafi \cite{MR2563768}, Dru{\c{t}}u-Mozes-Sapir \cite{MR2584607}, Sisto \cite{Sisto} and others. Since the concept of divergence is a quasi-isometry invariant, it is therefore a useful tool to study quasi-isometry classification of finitely generated groups. However, the concept of divergence fails to classify finitely generated groups in some certain cases. More precisely, we can not use this concept to decide whether two groups are quasi-isometric if they have the same divergence. For example, let $(G_1,H_1)$ and $(G_2,H_2)$ be two finitely presented one-ended relatively hyperbolic groups. Then, the divergence of $G_1$ and $G_2$ are both exponential (\cite{Sisto}). Therefore, we introduce a new quasi-isometry invariant, called the divergence spectrum, to deal with such circumstances. We remark that the idea of divergence spectra was originally suggested by Charney. 

We now sketch the idea of divergence spectra. Let $\gamma$ be a bi-infinite geodesic in a geodesic space $X$. The lower divergence of $\gamma$ is a function $g: (0,\infty)\to(0,\infty]$, where for each positive number $r$ the value $g(r)$ is the infimum on the lengths of all paths connecting some pair of points $x$, $y$ on $\gamma$ which lies outside the open ball with radius $r$ about the midpoint between $x$ and $y$ on $\gamma$. The divergence spectrum of $X$ is a family of functions that consists of all lower divergence functions of Morse bi-infinite geodesics in $X$. The divergence spectrum is a quasi-isometry invariant (see Section~ \ref{spectrum}) and therefore, we can use this concept to define the divergence spectrum of a finitely generated group via its Cayley graphs. If two spaces have different divergences, then they have different divergence spectra in most of known cases. Moreover, divergence spectra, in some certain cases, can tell us the difference between two finitely generated groups up to quasi-isometry although they have the same divergence. In this paper, we use the concept of divergence spectrum to classify a certain collection of relatively hyperbolic right-angled Coxeter groups up to quasi-isometry.

In Section \ref{sporh}, we study divergence spectra of relatively hyperbolic groups in relation to the geometric properties of their peripheral subgroups. In terms of divergence spectra, we show that there is a gap between the divergence of peripheral subgroups and the divergence of the whole relatively hyperbolic group (see the following theorem). 

\begin{thm}
\label{i1}
Let $(G,\PP)$ be a finitely generated relatively hyperbolic group. Suppose that there is a finite generating set $S$ such that for each subgroup $P$ in $\PP$ the set $S\cap P$ generates $P$ and the Cayley graph $\Gamma(P,S\cap P)$ of group $P$ with respect to the generating set $S\cap P$ is one-ended with the geodesic extension property. Let $f=\max\{Div_{\Gamma(P,S\cap P),e}\mid P\in\PP\}$. Then, the divergence spectrum of $G$ only contains functions that are at least exponential or dominated by $f$.
\end{thm}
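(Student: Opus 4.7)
The plan is to dichotomize Morse bi-infinite geodesics in $\Gamma(G,S)$ according to how they interact with peripheral cosets, and then treat the two cases separately. I would argue that every Morse bi-infinite geodesic $\gamma$ in $\Gamma(G,S)$ satisfies one of the following: (A) $\gamma$ lies in a bounded neighborhood of a single peripheral coset $gP$ with $P\in\PP$; or (B) the penetration of $\gamma$ into peripheral cosets is uniformly bounded along the entire geodesic. This dichotomy is natural in the relatively hyperbolic setting: unbounded penetration of $\gamma$ into a single peripheral coset combined with the bi-infinite Morse property forces the geodesic to be trapped in that coset (case (A)), while a sequence of ever deeper penetrations in distinct cosets would provide cheap detours incompatible with Morseness. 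I expect this to follow either directly from Sisto's characterization of Morse quasi-geodesics in relatively hyperbolic groups, or via a direct argument using deep transition points of $\gamma$.

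In case (A), after translating we may assume $\gamma$ stays in a uniform neighborhood of $P$ inside $\Gamma(G,S)$. Using that $\Gamma(P,S\cap P)$ is one-ended with the geodesic extension property, I would construct a bi-infinite geodesic $\gamma'$ inside $\Gamma(P,S\cap P)$ that shadows $\gamma$ within bounded Hausdorff error. For each $r>0$ the definition of $Div_{\Gamma(P,S\cap P),e}$ produces a detour in $\Gamma(P,S\cap P)$ around the midpoint of some $\gamma'$-segment, avoiding the $r$-ball and of controlled length; embedding this detour back into $\Gamma(G,S)$ yields a detour around the corresponding midpoint of $\gamma$ that still avoids a ball of radius comparable to $r$. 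This gives the desired upper bound, by $f$, on the lower divergence of $\gamma$.

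In case (B), I would show the lower divergence of $\gamma$ is at least exponential. The uniformly bounded peripheral penetration means that $\gamma$ maps to a uniform quasi-geodesic in the coned-off Cayley graph, which is $\delta$-hyperbolic. A detour path in $\Gamma(G,S)$ avoiding the $r$-ball around a midpoint of a $\gamma$-segment induces a detour in the coned-off graph avoiding a ball of radius comparable to $r$, and $\delta$-hyperbolicity forces that detour to have exponential combinatorial length in the coned-off graph. Using the standard comparison between length in $\Gamma(G,S)$ and length in the coned-off graph for relatively hyperbolic groups (as in the work of Sisto and of Dru{\c{t}}u--Mozes--Sapir) then converts this into an exponential lower bound on the length of the original detour in $\Gamma(G,S)$.

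The main obstacle I anticipate is in case (A): a detour in $\Gamma(G,S)$ around a midpoint of $\gamma$ can leave the bounded neighborhood of $P$, pass through other peripheral cosets, and return. To control the lower divergence of $\gamma$ by $f$, one must be able to replace such an excursion by a path that remains inside $P$ and still avoids the required ball, without inflating its length by more than a uniform multiplicative and additive constant. This replacement is exactly what the combined hypotheses on $S\cap P$, one-endedness, and geodesic extension are designed to enable, but making the passage between detours in $\Gamma(G,S)$ and detours in $\Gamma(P,S\cap P)$ quantitatively tight, so that one lands on the function $f$ and not merely some function with comparable growth type, is where the careful work lies.
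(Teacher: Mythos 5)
Your case (B) matches the paper's argument (bounded-depth geodesics stay close to transition points, which track a coned-off geodesic; Lemma~A.3 of Dru\c{t}u--Sapir then forces any detour to have length at least exponential in $r$), and your mechanism for producing a cheap detour from the divergence of $\Gamma(P,S\cap P)$ is also the right one. The genuine gap is in the dichotomy itself. A Morse bi-infinite geodesic can have $(\epsilon,R)$--deep components of unbounded length spread over infinitely many \emph{distinct} peripheral cosets without being trapped in any single one, and your reason for excluding this --- that ever deeper penetrations would ``provide cheap detours incompatible with Morseness'' --- is false. Cheap detours do not contradict the Morse property: Morseness constrains quasi-geodesics with endpoints \emph{on} $\gamma$, not the lower divergence, and indeed the paper's own examples are built on Morse geodesics in $G_{\Gamma_d}$ whose lower divergence is only polynomial. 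Concretely, in $G_{\Omega_d}$ one expects Morse geodesics whose successive excursions into distinct cosets of $G_{\Gamma_d}$ follow $N$--Morse directions of $G_{\Gamma_d}$ and have lengths tending to infinity; your trichotomy leaves such geodesics unhandled. The paper avoids this entirely by splitting on whether the lengths of deep components are uniformly bounded or not, and in the unbounded case it exploits that $ldiv_\gamma(r)=\inf_t\rho_\gamma(r,t)$ is an infimum over basepoints: for each $r$ one only needs a \emph{single} deep component of length $\gtrsim r$, in whatever coset it happens to lie, to manufacture a detour of length $\preceq f(r)$. No claim that $\gamma$ is trapped in one coset is needed, and none is available.

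A secondary, non-fatal point: the ``main obstacle'' you flag in case (A) --- converting an arbitrary detour of $\gamma$ in $\Gamma(G,S)$ that wanders through other cosets into one inside $P$ --- is not actually relevant to this theorem. To bound the lower divergence \emph{above} by $f$ you only have to exhibit one good detour, which you build inside the coset from the start; the conversion of arbitrary ambient detours into peripheral ones is the difficulty in the opposite inequality (Proposition~\ref{p2}, used for Theorem~\ref{i2}), not here. The only care needed in your case (A) is the bookkeeping between the intrinsic metric $d_{S\cap P}$ and the ambient metric $d_S$ when checking that the constructed path avoids the required $d_S$--ball, which the quasi-isometric embedding of $\Gamma(P,S\cap P)$ handles at the cost of uniform constants absorbed by the equivalence relation $\sim$.
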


We also show that the divergence spectrum of a peripheral subgroup of a relatively hyperbolic group is contained in the divergence spectrum of the whole group in most of cases (see the following theorem). 

\begin{thm}
\label{i2}
Let $(G,\PP)$ be a finitely generated relatively hyperbolic group. Let $P$ be a peripheral subgroup in $\PP$ such that the divergence spectrum of $P$ only contains subexponential functions. Then the divergence spectrum of $P$ is a subset of the divergence spectrum of $G$. 
\end{thm}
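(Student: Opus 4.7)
Given a Morse bi-infinite geodesic $\gamma$ in $\Gamma(P, S\cap P)$ with lower divergence function $g$ (subexponential by hypothesis), the strategy is to exhibit a corresponding Morse bi-infinite geodesic in $\Gamma(G, S)$ with lower divergence equivalent to $g$. The natural candidate is $\gamma$ itself, or a bi-infinite geodesic $\gamma'$ in $\Gamma(G, S)$ at bounded Hausdorff distance from $\gamma$. Such a $\gamma'$ exists because peripheral subgroups in relatively hyperbolic groups are undistorted, with $d_G$ and $d_P$ quasi-equivalent on $P$ and with $G$-geodesics between points of $P$ remaining in a uniform neighborhood of the peripheral coset.

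\textbf{Morse in $G$.} To verify that $\gamma'$ is Morse in $G$, consider a $(K, L)$-quasi-geodesic $\alpha$ in $G$ with endpoints on $\gamma'$. By the bounded-coset-penetration property of relatively hyperbolic groups, $\alpha$ remains in a neighborhood of the coset $P$ of radius depending only on $(K, L)$. Applying the coarse closest-point projection $\pi \colon G \to P$ yields a quasi-geodesic $\pi(\alpha)$ in $P$ with endpoints close to those of $\alpha$ on $\gamma$; the Morse property of $\gamma$ inside $P$ then forces $\pi(\alpha)$, and hence $\alpha$, to stay in a bounded neighborhood of $\gamma$.

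\textbf{Matching lower divergences and the main obstacle.} Finally, I would show that the lower divergence of $\gamma'$ computed in $\Gamma(G, S)$ is equivalent to $g$ in the sense used to define the spectrum. One inequality is essentially free: a $P$-path avoiding a $P$-ball about the midpoint also avoids a slightly smaller $G$-ball, since $d_P$ and $d_G$ are comparable on $P$. For the reverse, given a short $G$-path $\alpha$ between points $x, y$ on $\gamma'$ that avoids $B(m, r)$, one must argue that $\alpha$ cannot stray far from the coset $P$: any deep excursion would force $\alpha$ to have length at least exponential in the excursion depth, by the exponential divergence estimates for relatively hyperbolic groups (Sisto), contradicting the subexponential bound on $g$. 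Once $\alpha$ is confined near $P$, projecting it to $P$ produces a $P$-path of comparable length avoiding a comparable $P$-ball, giving the desired lower bound on $|\alpha|$. This last step is the main obstacle: the interplay among three scales --- the radius $r$, the path length, and the ambient exponential divergence of $G$ --- must be controlled carefully, and the subexponential hypothesis on $g$ is precisely what prevents short $G$-paths from exploiting the hyperbolic part of $G$ to undercut the lower divergence coming from $P$.
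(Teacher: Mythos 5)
Your proposal follows essentially the same route as the paper: the paper's Proposition \ref{p1} constructs the companion Morse bi-infinite geodesic in $\Gamma(G,S)$ exactly as you outline, using undistortion together with quasiconvexity of peripheral cosets (the fact you need is Lemma \ref{lemma2}, peripheral quasiconvexity, rather than BCP per se) and then projecting ambient quasi-geodesics back into $P$ to inherit the Morse property. The paper's Proposition \ref{p2} matches the lower divergences in the same way you describe, with the hard direction handled by showing that any excursion far from the coset with widely separated endpoints forces exponential length (Lemma \ref{b3}), which the subexponential hypothesis makes irrelevant, after which the remaining path is pushed into $\Gamma(P,T)$ via Lemma \ref{b4}.
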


%The above properties of the divergence spectra of relatively hyperbolic groups, in certain large cases, can tell us the difference up to quasi-isometry between two relatively hyperbolic groups when their peripheral subgroups have different geometric structure. We remark that this fact can also be seen by using Theorem 4.1 in \cite{MR2501302}.

In Section \ref{spora}, we use divergence spectra to study a certain class of right-angled Coxeter groups. In \cite{MR3314816}, Dani-Thomas showed that for a polynomial of any degree, there is a right-angled Coxeter group with divergence equivalent to that polynomial. Moreover, the divergence of a right-angled Coxeter group was proved to be either exponential or at most polynomial and its divergence is exponential if and only if the group is relatively hyperbolic (see Behrstock-Hagen-Sisto \cite{BHSC}, which has an appendix written jointly with Caprace). By using properties of divergence spectra of relatively hyperbolic groups, we show the existence of an infinite collection of right-angled Coxeter groups which all have exponential divergence but are not pairwise quasi-isometric (see the following theorem).

%Behrstock-Hagen-Sisto-Caprace proved that 

\begin{figure}
\begin{tikzpicture}[scale=0.7]

\draw (-14,1) node[circle,fill,inner sep=1pt, color=black](1){} -- (-13,2) node[circle,fill,inner sep=1pt, color=black](1){}-- (-12,1) node[circle,fill,inner sep=1pt, color=black](1){}-- (-13,0) node[circle,fill,inner sep=1pt, color=black](1){} -- (-14,1) node[circle,fill,inner sep=1pt, color=black](1){}; 

\draw (-14,1) node[circle,fill,inner sep=1pt, color=black](1){} -- (-14,3) node[circle,fill,inner sep=1pt, color=black](1){} -- (-12,3) node[circle,fill,inner sep=1pt, color=black](1){} --(-12,1) node[circle,fill,inner sep=1pt, color=black](1){};

\node at (-13,4) {$\Omega_1$};

\draw (-11,1) node[circle,fill,inner sep=1pt, color=black](1){} -- (-10,2) node[circle,fill,inner sep=1pt, color=black](1){}-- (-9,1) node[circle,fill,inner sep=1pt, color=black](1){}-- (-10,0) node[circle,fill,inner sep=1pt, color=black](1){} -- (-11,1) node[circle,fill,inner sep=1pt, color=black](1){}; 

\draw (-10,2) node[circle,fill,inner sep=1pt, color=black](1){} -- (-8,1) node[circle,fill,inner sep=1pt, color=black](1){}-- (-10,0) node[circle,fill,inner sep=1pt, color=black](1){};

\draw (-11,1) node[circle,fill,inner sep=1pt, color=black](1){} -- (-10,-1) node[circle,fill,inner sep=1pt, color=black](1){}-- (-9,1) node[circle,fill,inner sep=1pt, color=black](1){};

\draw (-11,1) node[circle,fill,inner sep=1pt, color=black](1){} -- (-11,3) node[circle,fill,inner sep=1pt, color=black](1){}-- (-8,3) node[circle,fill,inner sep=1pt, color=black](1){} -- (-8,1) node[circle,fill,inner sep=1pt, color=black](1){};

\node at (-9.5,4) {$\Omega_2$};

\draw (-7,1) node[circle,fill,inner sep=1pt, color=black](1){} -- (-6,2) node[circle,fill,inner sep=1pt, color=black](1){}-- (-5,1) node[circle,fill,inner sep=1pt, color=black](1){}-- (-6,0) node[circle,fill,inner sep=1pt, color=black](1){} -- (-7,1) node[circle,fill,inner sep=1pt, color=black](1){}; 

\draw (-6,2) node[circle,fill,inner sep=1pt, color=black](1){} -- (-4,1) node[circle,fill,inner sep=1pt, color=black](1){}-- (-6,0) node[circle,fill,inner sep=1pt, color=black](1){};

\draw (-6,2) node[circle,fill,inner sep=1pt, color=black](1){} -- (-3,1) node[circle,fill,inner sep=1pt, color=black](1){}-- (-6,0) node[circle,fill,inner sep=1pt, color=black](1){};

\draw (-4,1) node[circle,fill,inner sep=1pt, color=black](1){} -- (-5,-1) node[circle,fill,inner sep=1pt, color=black](1){}-- (-6,-1) node[circle,fill,inner sep=1pt, color=black](1){};

\draw (-7,1) node[circle,fill,inner sep=1pt, color=black](1){} -- (-6,-1) node[circle,fill,inner sep=1pt, color=black](1){}-- (-5,1) node[circle,fill,inner sep=1pt, color=black](1){};

\draw (-7,1) node[circle,fill,inner sep=1pt, color=black](1){} -- (-7,3) node[circle,fill,inner sep=1pt, color=black](1){}-- (-3,3) node[circle,fill,inner sep=1pt, color=black](1){} -- (-3,1) node[circle,fill,inner sep=1pt, color=black](1){};

\node at (-5,4) {$\Omega_3$};

\draw (-2,1) node[circle,fill,inner sep=1.5pt, color=black](1){};
\draw (-1.5,1) node[circle,fill,inner sep=1.5pt, color=black](1){};
\draw (-1,1) node[circle,fill,inner sep=1.5pt, color=black](1){};
\draw (-0.5,1) node[circle,fill,inner sep=1.5pt, color=black](1){};
\draw (0,1) node[circle,fill,inner sep=1.5pt, color=black](1){};

\draw (1,1) node[circle,fill,inner sep=1pt, color=black](1){} -- (2,2) node[circle,fill,inner sep=1pt, color=black](1){}-- (3,1) node[circle,fill,inner sep=1pt, color=black](1){}-- (2,0) node[circle,fill,inner sep=1pt, color=black](1){} -- (1,1) node[circle,fill,inner sep=1pt, color=black](1){}; 

\draw (2,2) node[circle,fill,inner sep=1pt, color=black](1){} -- (4,1) node[circle,fill,inner sep=1pt, color=black](1){}-- (2,0) node[circle,fill,inner sep=1pt, color=black](1){};

\draw (2,2) node[circle,fill,inner sep=1pt, color=black](1){} -- (5,1) node[circle,fill,inner sep=1pt, color=black](1){}-- (2,0) node[circle,fill,inner sep=1pt, color=black](1){};

\draw (2,2) node[circle,fill,inner sep=1pt, color=black](1){} -- (9,1) node[circle,fill,inner sep=1pt, color=black](1){}-- (2,0) node[circle,fill,inner sep=1pt, color=black](1){};

\draw (2,2) node[circle,fill,inner sep=1pt, color=black](1){} -- (8,1) node[circle,fill,inner sep=1pt, color=black](1){}-- (2,0) node[circle,fill,inner sep=1pt, color=black](1){};

\draw (4,1) node[circle,fill,inner sep=1pt, color=black](1){} -- (3,-1) node[circle,fill,inner sep=1pt, color=black](1){}-- (2,-1) node[circle,fill,inner sep=1pt, color=black](1){};

\draw (5,1) node[circle,fill,inner sep=1pt, color=black](1){} -- (4,-1) node[circle,fill,inner sep=1pt, color=black](1){}-- (3,-1) node[circle,fill,inner sep=1pt, color=black](1){};

\draw (8,1) node[circle,fill,inner sep=1pt, color=black](1){} -- (7,-1) node[circle,fill,inner sep=1pt, color=black](1){}-- (6,-1) node[circle,fill,inner sep=1pt, color=black](1){};

\draw[densely dotted] (7,1) -- (6.5,0);

\draw (6.5,0) -- (6,-1);

\draw[densely dotted] (4,-1) node[circle,fill,inner sep=1pt, color=black](1){}-- (6,-1) node[circle,fill,inner sep=1pt, color=black](1){};

\draw[densely dotted] (5.5,1) -- (7,1) node[circle,fill,inner sep=1pt, color=black](1){};

\draw (1,1) node[circle,fill,inner sep=1pt, color=black](1){} -- (2,-1) node[circle,fill,inner sep=1pt, color=black](1){}-- (3,1) node[circle,fill,inner sep=1pt, color=black](1){};

\node at (2,-0.25) {$b_0$};

\node at (2,2.25) {$a_0$};

\node at (0.75,1) {$b_1$};

\node at (3.3,1) {$a_1$};

\node at (4.3,1) {$a_2$};

\node at (5.3,1) {$a_3$};

\node at (8.4,0.6) {$a_{d-1}$};

\node at (9.4,1) {$a_d$};

\node at (2,-1.4) {$b_2$};

\node at (3,-1.4) {$b_3$};

\node at (4,-1.4) {$b_4$};

\node at (6,-1.4) {$b_{d-1}$};

\node at (7,-1.4) {$b_d$};

\node at (1,3.4) {$c_1$};

\node at (9,3.4) {$c_2$};

\draw (1,1) node[circle,fill,inner sep=1pt, color=black](1){} -- (1,3) node[circle,fill,inner sep=1pt, color=black](1){}-- (9,3) node[circle,fill,inner sep=1pt, color=black](1){} -- (9,1) node[circle,fill,inner sep=1pt, color=black](1){};

\node at (5,4) {$\Omega_d$};

\end{tikzpicture}

\caption{}
\label{azero}
\end{figure}

\begin{thm}
\label{i3}
For each $d\geq 2$, let $\Omega_d$ be a graph in Figure \ref{azero} and $G_{\Omega_d}$ the associated right-angled Coxeter group. If $d_1$ and $d_2$ are two different positive integers, then $G_{\Omega_{d_1}}$ and $G_{\Omega_{d_2}}$ have different divergence spectra. Therefore, they are not quasi-isometric.
\end{thm}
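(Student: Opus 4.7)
The plan is to identify the peripheral structure of $G_{\Omega_d}$ as a relatively hyperbolic group, compute the divergence of its peripheral subgroup, and then apply Theorems~\ref{i1} and~\ref{i2} to distinguish the divergence spectra for different values of $d$.

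First I would establish the relatively hyperbolic structure. Let $\Lambda_d$ be the induced subgraph of $\Omega_d$ on the vertex set $\{a_0,b_0,a_1,b_1,\dots,a_d,b_d\}$ and let $P_d$ be the right-angled Coxeter subgroup of $G_{\Omega_d}$ generated by this vertex set. Since $c_1$ is adjacent only to $b_1,c_2$, since $c_2$ is adjacent only to $c_1,a_d$, and since $b_1\not\sim a_d$, neither $c_1$ nor $c_2$ lies on any induced 4-cycle of $\Omega_d$, so every induced 4-cycle of $\Omega_d$ lies inside $\Lambda_d$. Combined with the characterization of relative hyperbolicity for right-angled Coxeter groups in Behrstock--Hagen--Sisto--Caprace~\cite{BHSC}, this implies $(G_{\Omega_d},\{P_d\})$ is relatively hyperbolic.

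Next I would analyse $P_d$. The graph $\Lambda_d$ is a Dani--Thomas graph in the sense of~\cite{MR3314816}, giving $P_d$ polynomial divergence of degree $d-1$. I would then verify the two hypotheses appearing in Theorems~\ref{i1} and~\ref{i2}: first, that $P_d$ is one-ended and that its Cayley graph with respect to the induced generating set has the geodesic extension property, both of which follow from $\Lambda_d$ being connected, having no cut vertex, and not being a join (so that the Davis complex of $P_d$ is a one-ended geodesically complete CAT(0) cube complex); and second, that there is a bi-infinite Morse geodesic $\gamma_d$ in this Cayley graph whose lower divergence function $f_d$ is equivalent to a polynomial of degree $d-1$. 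The geodesic $\gamma_d$ can be obtained by a \emph{staircase} spelling adapted to the nested four-cycle structure of $\Lambda_d$.

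With these pieces in hand, the theorems conclude the argument. Because $f_d$ is subexponential, Theorem~\ref{i2} places $f_d$ in the divergence spectrum of $G_{\Omega_d}$. Conversely, Theorem~\ref{i1} applied with $f$ equal to the divergence function of $P_d$ says that every element of the divergence spectrum of $G_{\Omega_d}$ is either at least exponential or dominated by $f$, a polynomial of degree $d-1$. Now let $d_1<d_2$. The function $f_{d_2}$ belongs to the divergence spectrum of $G_{\Omega_{d_2}}$; but $f_{d_2}$ is a polynomial of degree $d_2-1$, which is neither exponential nor dominated by the degree-$(d_1-1)$ polynomial bound for $G_{\Omega_{d_1}}$, so $f_{d_2}$ cannot lie in the divergence spectrum of $G_{\Omega_{d_1}}$. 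Hence the two divergence spectra differ, and by the quasi-isometry invariance of the divergence spectrum the groups $G_{\Omega_{d_1}}$ and $G_{\Omega_{d_2}}$ are not quasi-isometric.

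The main obstacle I anticipate is exhibiting the Morse bi-infinite geodesic $\gamma_d$ in $P_d$ whose \emph{lower} divergence realizes the polynomial of degree $d-1$. Dani--Thomas provide matching upper and lower bounds for the overall Gersten divergence of $P_d$, but refining these into a bi-infinite Morse geodesic whose midpoint-based lower divergence attains degree $d-1$ requires a careful choice of staircase spelling inside $\Lambda_d$, verification of the Morse (stability) property of the resulting geodesic, and a polynomial lower bound on the length of every detour around the midpoint ball. The relative hyperbolicity check and the verification of the Cayley-graph hypotheses of Theorem~\ref{i1} are more routine given the explicit structure of $\Omega_d$.
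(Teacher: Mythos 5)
Your overall strategy is exactly the paper's: isolate the special subgroup on the vertices $\{a_i,b_i\}$ (the Dani--Thomas graph $\Gamma_d$) as the sole peripheral subgroup, invoke relative hyperbolicity of the pair, and play Theorem~\ref{i1} against Theorem~\ref{i2} so that the polynomial realized in the spectrum of the larger-index group cannot occur in the spectrum of the smaller-index one. The endgame of your argument coincides with the paper's.

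The step you flag as the main obstacle is, however, a genuine gap as written. Dani--Thomas compute the Gersten divergence of $G_{\Gamma_d}$, but Theorem~\ref{i2} needs a specific Morse bi-infinite geodesic in $\Gamma(G_{\Gamma_d}, S\cap P)$ whose \emph{lower} divergence is equivalent to the polynomial; this does not follow formally from the Gersten divergence, since lower divergence is an infimum over all center points along the geodesic and the Morse property must be verified separately. The paper does not reconstruct this: it quotes Proposition~3.19 of \cite{MR3451473} (restated as Proposition~\ref{propo}), which asserts that the periodic geodesic labeled $a_db_da_db_d\cdots$ is Morse with lower divergence $\sim r^d$. Either cite that result or carry out the staircase analysis in full --- the latter is a substantial piece of work, not a routine refinement. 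Two smaller corrections: by \cite{MR3314816} the divergence of $G_{\Gamma_d}$ is $r^d$, not $r^{d-1}$ (this off-by-one does not affect the logic, since distinct $d$ still yield distinct degrees, but the exponents should be fixed); and your relative hyperbolicity check verifies only the induced-square condition, whereas the criterion of \cite{MR3450952} (Theorem A$'$) also imposes an almost-malnormality-type condition on the peripheral subset --- it holds here because each of $c_1$, $c_2$ commutes with only one vertex of $\Gamma_d$, but it should be checked explicitly.
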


We remark that each graph $\Omega_d$ is a modification of a graph $\Gamma_d$ in Figure 5.1 \cite{MR3314816}. Dani-Thomas \cite{MR3314816} built the graphs $\Gamma_d$ to study divergence of right-angled Coxeter groups. They proved that the divergence of $G_{\Gamma_d}$ is a polynomial $r^d$. Our infinite collection of right-angled Coxeter groups is a variation of the example created by them. Here we construct the graph $\Omega_d$ containing the subgraph $\Gamma_d$ in some certain way to create the relatively hyperbolic right-angled Coxeter group $G_{\Omega_d}$ with respect to the right-angled Coxeter subgroup $G_{\Gamma_d}$. We use the difference on divergence of peripheral subgroups $G_{\Gamma_d}$ proved by Dani-Thomas as one of the key points to show that all groups $G_{\Omega_d}$ have different divergence spectra. %We also remark that the idea of defining the quasi-isometry invariant divergence spectrum and use it to classify right-angled Coxeter groups up to quasi-isometry were suggested by Prof.~Charney. We developed her idea and use it to classify right-angled Coxeter groups $G_{\Omega_d}$ up to quasi-isometry as in Theorem \ref{i3}.

We also remark that there is an alternate (shorter) proof that does not use the divergence spectrum to differentiate groups $G_{\Omega_d}$ by using a combination of works in \cite{MR2501302}, \cite{MR3450952}, \cite{MR1254309}, \cite{Sisto}, and \cite{MR3314816} (see Remark \ref{aw}). However, Theorem \ref{i3} is a concrete illustration of how divergence spectra could be used to distinguish quasi-isometry classes of finitely generated groups. We hope that divergence spectra will prove useful for classifying more finitely generated groups up to quasi-isometry. 

In \cite{MC}, Cordes generalized the concept contracting boundary on a $\CAT(0)$ space (see \cite{MR3339446}) by defining a quasi-isometry invariant of proper geodesic metric spaces, called the Morse boundary. The Morse boundary of a space $X$, denoted $\partial_M X$, is the set of all Morse geodesic rays in $X$ where two geodesic rays $\alpha$, $\alpha'$ are equivalent if there exists a constant $K$ such that $d\bigl(\alpha(t), \alpha'(t)\bigr) < K$ for all $t>0$. Fix a basepoint $p$ and for each Morse gauge $N$, Cordes topologize the set $\partial_M^N X_p$ of all equivalent classes of all $N$--Morse geodesic rays as one does for the Gromov boundary of a hyperbolic space. He endows the Morse boundary with the topology of the direct limit over all Morse gauges and shows that this boundary is independent of basepoint. Moreover, he proves that Morse boundary is a quasi-isometry invariant and therefore gives a well-defined boundary for any finitely generated group.

When investigating the behaviors of Morse geodesics in Cayley graphs of finitely generated relatively hyperbolic groups, we obtain some results on their Morse boundaries. We first show the connection between the Morse boundary of each peripheral subgroup in a finitely generated relatively hyperbolic group with the Morse boundary of the whole group (see the following theorem).

\begin{thm}
\label{i4}
Let $(G,\PP)$ be a finitely generated relatively hyperbolic group. Then for each peripheral subgroup $P$ in $\PP$ the inclusion $i_P\!:P\hookrightarrow G$ induces a Morse preserving map. Therefore, $\partial_M i_P: \partial_M P \rightarrow \partial_M G$ is a topological embedding.
\end{thm}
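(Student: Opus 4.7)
My plan is to establish the stronger claim that $i_P\!:P\hookrightarrow G$ is a Morse-preserving quasi-isometric embedding, from which the topological embedding on Morse boundaries will follow from Cordes' general principle that such maps descend to continuous injections on the Morse boundary (via the Gromov-style compactification performed at each Morse gauge and then passed to the direct limit). So the work is to verify two things: that $i_P$ is a quasi-isometric embedding, and that $i_P$ sends every $N$-Morse geodesic in $P$ to an $N'$-Morse quasi-geodesic in $G$ for some $N'$ depending only on $N$ together with the ambient relative hyperbolicity constants.

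The first point is standard: in any finitely generated relatively hyperbolic group $(G,\PP)$ each peripheral subgroup is undistorted, which is essentially built into Osin's definition and also appears in the Dru\c{t}u--Sapir theory. Hence $i_P$ is a $(\lambda,c)$-quasi-isometric embedding for constants depending only on fixed generating sets of $G$ and $P$.

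The main step is Morse preservation. Let $\gamma$ be an $N$-Morse geodesic in $P$ and let $\alpha$ be an arbitrary $(K,L)$-quasi-geodesic in $G$ joining two points $x,y$ on $i_P\circ\gamma$. After a standard straightening I may assume $\alpha$ has no backtracking through peripheral cosets. The subsegment $[x,y]_\gamma\subset P$ is itself a $(\lambda,c)$-quasi-geodesic in $G$ with the same endpoints as $\alpha$, so I can invoke the bounded coset penetration property (BCP) of Farb and Osin to compare them: the peripheral cosets deeply penetrated by $\alpha$ coincide, up to bounded entry/exit discrepancies, with those penetrated by $[x,y]_\gamma$, which is just $\{P\}$. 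This will confine $\alpha$ to a bounded neighborhood of $P$ whose radius depends only on $K,L,\lambda,c$ and the relative hyperbolicity constants. Next I will project $\alpha$ into $P$ via a coarse closest-point projection to obtain a quasi-geodesic $\alpha'\subset P$ from $x$ to $y$ with controlled constants, and then apply the $N$-Morse property of $\gamma$ within $P$ to bound the Hausdorff distance from $\alpha'$ to $\gamma$. Adding the bounded distance from $\alpha$ to its projection yields the desired Morse bound in $G$, giving an explicit $N'=N'(N,\lambda,c,\text{BCP data})$; Cordes' functoriality then produces the topological embedding $\partial_M i_P$.

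The main obstacle I expect is controlling possible excursions of $\alpha$ into peripheral cosets other than $P$: these are exactly what could destroy both the projection argument and the Morse bound. This is precisely what BCP is designed to handle. A deep penetration of $\alpha$ into some $gQ\neq P$ would, by BCP, force $[x,y]_\gamma$ to penetrate $gQ$ at matching entry and exit points; but distinct peripheral cosets have coarsely bounded coarse intersection and $\gamma\subset P$, so any such penetration must in fact be shallow. Hence every detour of $\alpha$ outside $P$ has uniformly bounded length, which both justifies the coarse projection to $P$ and closes the argument.
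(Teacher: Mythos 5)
Your proposal is correct and is essentially the paper's own argument: the paper deduces this theorem from Proposition~\ref{p1} (every $M$--Morse geodesic of $\Gamma(P,S\cap P)$ lies at finite Hausdorff distance from an $N$--Morse geodesic of $\Gamma(G,S)$, proved by confining an arbitrary ambient quasi-geodesic with endpoints near $P$ to a uniformly bounded neighborhood of $P$ and then projecting it into $\Gamma(P,S\cap P)$ so as to exploit the Morseness of the original geodesic there) together with Cordes' Proposition~\ref{Corte}, which is exactly your two-step plan. The only difference is the source of the confinement step: you re-derive it from BCP, whereas the paper simply invokes the Dru\c{t}u--Sapir quasiconvexity lemma (Lemma~\ref{lemma2}), which states directly that a $(K,L)$--quasi-geodesic with endpoints in a bounded neighborhood of a peripheral left coset stays in a uniformly bounded neighborhood of that coset, thereby avoiding the de-backtracking and coset-penetration bookkeeping your sketch would still need to make rigorous.
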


We also show a connection between Morse boundaries and Bowditch boundaries defined in \cite{MR2922380}.

\begin{thm}
\label{i5}
Let $(G,\PP)$ be a finitely generated relatively hyperbolic group. Then there is a $G$--equivariant continuous map $f$ from the Morse boundary $\partial_M G$ to the Bowditch boundary $\partial (G,\PP)$ with the following properties:
\begin{enumerate}
\item The map $f$ maps the set of non-peripheral limit points of $\partial_M G$ injectively into the set of non-parabolic points of $\partial (G,\PP)$.
\item The map $f$ maps peripheral limit points of the same type in $\partial_M G$ to the same parabolic point in $\partial (G,\PP)$.
\end{enumerate}
In particular, if the Morse boundary of each peripheral subgroup is empty, then the maps $f$ maps the Morse boundary $\partial_M G$ injectively into the set of non-parabolic points of $\partial (G,\PP)$.
\end{thm}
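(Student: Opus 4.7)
The plan is to construct $f$ via the cusped space model of $(G,\PP)$. The Bowditch boundary $\partial(G,\PP)$ is identified with the Gromov boundary of the hyperbolic cusped space $X(G,\PP)$, obtained by gluing combinatorial horoballs to the Cayley graph $\Gamma(G,S)$ along the cosets of the peripheral subgroups. The first step is to establish a dichotomy for Morse rays: an $N$--Morse geodesic ray $\alpha$ based at $e$ either (a) lies in a uniformly bounded neighborhood of some peripheral coset $gP$, or (b) makes only uniformly bounded excursions into every horoball and therefore projects to a quasi-geodesic ray in $X(G,\PP)$ with constants depending only on $N$. Indeed, a sufficiently deep horoball excursion in case (b) would furnish an arbitrarily long horoball-detour that can be shortcut through the horoball, contradicting the Morse condition. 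I then define $f([\alpha])$ to be the parabolic fixed point of $gPg^{-1}$ in case (a) and the limit of $\alpha$ in $\partial X(G,\PP)$ in case (b).

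Well-definedness follows because equivalent Morse rays are at bounded Cayley distance, hence at bounded distance in $X(G,\PP)$, and therefore limit to the same boundary point; in the peripheral case they also track the same coset. The $G$--equivariance is immediate from the fact that the left $G$--action extends to $X(G,\PP)$ by isometries. Property (2) holds by construction once \emph{same type} is read as tracking the same conjugate $gPg^{-1}$, while property (1) will follow once injectivity is proved.

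For continuity, the topology on $\partial_M G$ is the direct limit of the topologies on the strata $\partial_M^N G_e$, so it suffices to show continuity on each stratum. For a fixed Morse gauge $N$, the dichotomy above supplies uniform quasi-geodesic constants in $X(G,\PP)$ in case (b) and a uniform peripheral tubular neighborhood in case (a). Standard Gromov-hyperbolic arguments then convert uniform fellow traveling in $\Gamma(G,S)$ out to every finite time into uniform fellow traveling in $X(G,\PP)$ out to every finite time, which is exactly the visual convergence defining the topology on $\partial X(G,\PP)$.

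For injectivity on the non-peripheral part, suppose $[\alpha_1]$ and $[\alpha_2]$ are non-peripheral with $f([\alpha_1])=f([\alpha_2])$. Their images are uniform quasi-geodesic rays in $X(G,\PP)$ limiting to the same point, hence they fellow travel there; since each has bounded horoball penetration, this fellow traveling lifts back to bounded fellow traveling in $\Gamma(G,S)$, giving $[\alpha_1]=[\alpha_2]$. The \textit{in particular} clause follows from Theorem \ref{i4}: if $\partial_M P=\emptyset$ for every $P\in\PP$, then no Morse ray in $G$ can stay within bounded distance of a peripheral coset $gP$, since the closest-point projection, combined with the Morse-preserving inclusion $i_P$, would otherwise produce a Morse ray in $P$. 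The main obstacle will be the quantitative translation between the Morse gauge $N$ and the horoball penetration bound, together with the converse passage from cusped-space fellow traveling back to Cayley fellow traveling; both rely on carefully combining the detour characterization of the Morse property with the exponential distortion of horoballs in $X(G,\PP)$.
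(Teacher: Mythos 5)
Your construction hinges on the dichotomy that an $N$--Morse geodesic ray $\alpha$ in $\Gamma(G,S)$ either stays in a bounded neighborhood of a single peripheral coset or has uniformly bounded horoball penetration, hence becomes a uniform quasi-geodesic ray in the cusped space. That dichotomy is false, and the justification you give for it does not apply: the shortcut through a combinatorial horoball is a path in the cusped space, not in $\Gamma(G,S)$, so it is not a competitor for the Morse condition on $\alpha$, which only constrains quasi-geodesics of the Cayley graph (indeed, a Morse ray lying inside a peripheral coset runs along a horosphere forever without violating Morseness). Concretely, let $A$ be a group admitting a Morse bi-infinite geodesic $\mu$ (for instance one of the groups $G_{\Gamma_d}$ of Section~\ref{spora}) and let $G=A*\langle b\mid b^2\rangle$, which is hyperbolic relative to $\{A\}$. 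The geodesic ray reading $\mu([0,\ell_1])\,b\,\mu([0,\ell_2])\,b\cdots$ with $\ell_n\to\infty$ is Morse in $G$ (each peripheral excursion is a uniformly Morse segment of a quasi-convex coset, and the $b$--edges are cut edges), it is not contained in $N_R(gA)$ for any single coset and any $R$, yet its deep excursions into the cosets $gA$ --- and therefore the horoball penetration of the corresponding cusped-space geodesics --- are unbounded. So such a ray is not a quasi-geodesic of the cusped space with constants depending only on its Morse gauge, and the steps that rely on this uniformity (the continuity argument on each stratum $\partial_M^N$, the existence of the limit point in case (b), and the claim that fellow traveling in the cusped space lifts back to bounded fellow traveling in $\Gamma(G,S)$ for injectivity) are left without support.

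By contrast, the paper works with the coned-off Cayley graph, where arbitrarily long peripheral excursions are collapsed to single cone points, so no penetration bound is ever needed: Lemma~\ref{l5} produces a coned-off geodesic tracking $\alpha$ regardless of the lengths of its deep components, and the Morse hypothesis is invoked only at the end, together with Lemma~\ref{l8}, to upgrade coarse agreement of transition points to finite Hausdorff distance between the rays in $\Gamma(G,S)$. If you wish to keep the cusped-space model you must abandon the penetration bound and control Morse rays through their transition points instead; moreover the continuity proof must treat convergence to parabolic points as a separate case, since the Bowditch topology at a parabolic point is not described by visual convergence of uniform quasi-geodesic rays. Your treatment of well-definedness, equivariance, and the final clause via Theorem~\ref{i4} is essentially sound, but the core quantitative mechanism of the argument needs to be replaced.
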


%We also remark that the proof of the fact $G_{\Omega_{d_1}}$ and $G_{\Omega_{d_2}}$ ($d_1\neq d_2$) are not quasi-isometric can be deduced from Theorem 4.1 \cite{MR2501302}, Theorem A' \cite{MR3450952}, Theorem 1.2 \cite{MR3314816}, Theorem 1.3 \cite{Sisto} and the fact that group divergence is a quasi-isometry invariant (see \cite{MR1254309}). We work on Theorem \ref{i3} here to mainly focus on the stronger fact that $G_{\Omega_{d_1}}$ and $G_{\Omega_{d_2}}$ ($d_1\neq d_2$) have different.

The outline of the paper is as follows. In Section~ \ref{prelim}, we prepare some preliminary knowledge for the main part of the paper. This knowledge will be used to define the divergence spectrum and compute divergence spectra of certain groups. In Section~ \ref{spectrum}, we give the precise definition of the divergence spectrum of a geodesic space and use this concept to define the divergence spectrum of a finitely generated group. In Section~ \ref{conex}, we review other concepts of divergence and show some connections between the divergence spectrum and these concepts. We also give some examples of the divergence spectra of some spaces and finitely generated groups. In Section~ \ref{sporh}, we examine some properties of divergence spectra of relatively hyperbolic groups. In this section, readers can find the proof of Theorem \ref{i1} and Theorem \ref{i2}. In Section~ \ref{spora}, we use divergence spectra to show the existence of an infinite collection of right-angled Coxeter groups which all have exponential divergence but are not pairwise quasi-isometric. In this section, readers can find the proof of Theorem \ref{i3}. In Section~ \ref{mbdr}, we review the concept of Morse boundary in \cite{MC} and give the proof of Theorem \ref{i4}. In Section~ \ref{bbvsmb}, we review the concept of Bowditch boundary in \cite{MR2922380} and show a connection between Morse boundary and Bowditch boundary for a finitely generated relatively hyperbolic group $(G,\PP)$. In this section, readers can find the proof of Theorem \ref{i5}.

\subsection*{Acknowledgments}
I would like to thank Prof.~Ruth Charney for her suggestion to study the quasi-isometry invariant divergence spectra and her encouragement to publish this paper. I also thank Prof.~Kim Ruane for helpful conversations about the connection between contracting boundaries and Bowditch boundaries that gave me motivation for studying Morse boundaries of relatively hyperbolic groups. I want to thank Prof.~Christopher Hruska, Prof.~Pallavi Dani, Hoang Thanh Nguyen and Kevin Schreve for their very helpful conversations and suggestions. I also thank the referee for advice that improved the exposition of the paper.

\section{Preliminaries}
\label{prelim}
In this section, we discuss some preliminary background before discussing the main part of the paper. We first construct the notions of domination and equivalence. We review some concepts in geometric group theory: geodesic spaces, quasi-geodesics, Morse quasi-geodesic, quasi-isometry, quasi-isometric embedding, and the geodesic extension property.

\begin{defn}
Let $\mathcal{M}$ be the collection of all functions from $[0,\infty)$ to $[0,\infty]$. Let $f$ and $g$ be arbitrary elements of $\mathcal{M}$. \emph{The function $f$ is dominated by the function $g$}, denoted \emph{$f\preceq g$}, if there are positive constants $A$, $B$, $C$ and $D$ such that $f(x)\leq Ag(Bx)+Cx$ for all $x>D$. Two function $f$ and $g$ are \emph{equivalent}, denoted \emph{$f\sim g$}, if $f\preceq g$ and $g\preceq f$. \emph{The function $f$ is strictly dominated by the function $g$}, denoted \emph{$f\prec g$}, if $f$ is dominated by $g$ and they are not equivalent.
\end{defn}

\begin{rem}
The relations $\preceq$ and $\prec$ are transitive. The relation $\sim$ is an equivalence relation on the set $\mathcal{M}$.

Let $f$ and $g$ be two polynomial functions in the family $\mathcal{M}$. We observe that $f$ is dominated by $g$ iff the degree of $f$ is less than or equal to the degree of $g$ and they are equivalent iff they have the same degree. All exponential functions of the form $a^{bx+c}$, where $a>1, b>0$ are equivalent. Therefore, a function $f$ in $\mathcal{M}$ is \emph{linear, quadratic or exponential...} if $f$ is respectively equivalent to any polynomial with degree one, two or any function of the form $a^{bx+c}$, where $a>1, b>0$.

\end{rem}

\begin{defn}
Let $X$ be a geodesic space and $A$ a subspace of $X$. Let $r$ be any positive number.
\begin{enumerate}
\item $N_r(A)=\bigset{x \in X}{d_X(x, A)<r}$
\item $\partial N_r(A)=\bigset{x \in X}{d_X(x, A)=r}$ 
\item $C_r(A)=X-N_r(A)$.
%\item Let $d_{r,A}$ be the induced length metric on the complement of the $r$--neighborhood of $A$ in $X$. If the subspace $A$ is clear from context, we can use the notation $d_r$ instead of using $d_{r,A}$. 
\end{enumerate}
\end{defn}

\begin{defn}
Let $(X,d_X)$ and $(Y,d_Y)$ be two metric spaces. A map $\Phi$ from $X$ to $Y$ is a \emph{$(K,L)$--quasi-isometric embedding} if for all $x_1, x_2$ in $X$ the following inequality holds:\[({1}/{K})\,d_X(x_1,x_2)-L\leq d_Y\bigl(\Phi(x_1),\Phi(x_2)\bigr)\leq K\,d_X(x_1,x_2)+L.\]
If, in addition, $N_L\bigl(\Phi(X)\bigr)=Y$, then $\Phi$ is called a \emph{$(K,L)$--quasi-isometry}. Two spaces $X$ and $Y$ are quasi-isometric if there is a $(K,L)$--quasi-isometry from $X$ to $Y$.

The special case of a quasi-isometric embedding where the domain is a connected interval in $\RR$ (possibly all of $\RR$) is called a \emph{$(K,L)$--quasi-geodesic}. A \emph{geodesic} is a $(1,0)$--quasi-geodesic. The metric space $X$ is a \emph{geodesic space} if any pair of points in $X$ can be joined by a geodesic segment.
\end{defn}

\begin{rem}
We assume that all metric spaces in this paper are proper geodesic metric spaces (i.e. every closed ball is compact).
\end{rem}

\begin{defn}
A space $X$ has the \emph{geodesic extension property} if any geodesic segment lies in a bi-infinite geodesic in $X$. %A space $X$ is \emph{special} if the complement of any open ball in $X$ has only one component.
\end{defn}

%\begin{rem}
%If $X$ is one-ended space with the geodesic extension property, then $X$ is special. 
%\end{rem}

\begin{defn}
A quasi-geodesic $\gamma$ is \emph{$M$--Morse} if for any constants $K\geq 1$ and $L> 0$, there is a constant $M = M(K,L)$ such that every $(K,L)$--quasi-geodesic $\sigma$ with endpoints on $\gamma$ lies in the $M$--neighborhood of $\gamma$. A quasi-geodesic $\gamma$ is \emph{Morse} if it is $M$--Morse for some $M$. We call $M$ a \emph{Morse gauge} for $\gamma$.
\end{defn}

We now come up with some lemmas that prepare us to define a new quasi-isometry invariant, called the divergence spectrum. The proofs of the following two lemmas are obvious, and we leave them to the reader.

\begin{lem}
\label{b2}
\label{qip}
Let $(X,d_X)$ and $(Y,d_Y)$ be two geodesic spaces and the map $\Phi$ from $X$ to $Y$ a $(K,L)$--quasi-isometry. Then there is a constant $C=C(K,L)\geq1$ such that the following hold:
\begin{enumerate}
\item $({1}/{C})\,d_X(x_1,x_2)-1\leq d_Y\bigl(\Phi(x_1),\Phi(x_2)\bigr)\leq C\,d_X(x_1,x_2)+C$, for all $x_1, x_2$ in $X$
\item $N_C\bigl(\Phi(X)\bigr)=Y$
\item If $\alpha$ is a path connecting two points $x_1$ and $x_2$ in $X$, then there is a path $\beta$ connecting $\Phi(x_1)$ and $\Phi(x_2)$ in $Y$ such that the Hausdorff distance between $\Phi(\alpha)$ and $\beta$ is at most $C$. Moreover, $\abs{\beta}\leq C\abs{\alpha}+C$.
\item If $\beta$ is a path connecting two points $\Phi(x_1)$ and $\Phi(x_2)$ for some $x_1, x_2 \in X$, then there is a path $\alpha$ connecting $x_1$ and $x_2$ in $X$ such that the Hausdorff distance between $\Phi(\alpha)$ and $\beta$ is at most $C$. Moreover, $\abs{\alpha}\leq C\abs{\beta}+C$.
\end{enumerate} 
\end{lem}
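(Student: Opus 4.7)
The plan is to handle the four assertions separately, taking $C$ at the end to be the maximum of the constants that emerge. Parts (1) and (2) are essentially restatements of the defining inequalities of a quasi-isometry, while parts (3) and (4) require turning continuous paths into discrete samples, transferring them across $\Phi$ (or a quasi-inverse of $\Phi$), and reconnecting by geodesics in the target space.

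For (1), the upper bound $d_Y(\Phi(x_1),\Phi(x_2)) \leq K d_X(x_1,x_2) + L$ is absorbed into $C d_X(x_1,x_2) + C$ for any $C \geq \max(K,L)$. For the lower bound, a short case split on whether $d_X(x_1,x_2)$ is small or large compared to $K(L+1)$ shows that $(1/C) d_X(x_1,x_2) - 1 \leq (1/K) d_X(x_1,x_2) - L \leq d_Y(\Phi(x_1),\Phi(x_2))$, once $C$ is chosen large enough in terms of $K$ and $L$. Part (2) is immediate, since quasi-surjectivity of $\Phi$ already gives $N_L(\Phi(X)) = Y$.

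For (3), parameterize $\alpha$ by arc length and subdivide by points $p_0 = x_1, p_1, \ldots, p_n = x_2$ with $d_X(p_i, p_{i+1}) \leq 1$ and $n \leq \abs{\alpha} + 1$. The quasi-isometry inequality forces $d_Y(\Phi(p_i), \Phi(p_{i+1})) \leq K + L$, so concatenating geodesic segments in $Y$ between consecutive $\Phi(p_i)$ produces a path $\beta$ of length at most $(K+L)(\abs{\alpha} + 1)$. A triangle inequality argument bounds the Hausdorff distance between $\Phi(\alpha)$ and $\beta$ by a constant depending only on $K$ and $L$, because every point of $\Phi(\alpha)$ is the image of some $\alpha(t)$ lying within distance $1/2$ of some $p_i$, and every point of $\beta$ lies within $(K+L)/2$ of some $\Phi(p_i)$.

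Part (4) is the main obstacle, because the given path $\beta$ lives in $Y$, where generic points need not lie in $\Phi(X)$, so the construction requires lifting $\beta$ via a quasi-inverse. First pick a $1$-dense sequence $q_0 = \Phi(x_1), q_1, \ldots, q_n = \Phi(x_2)$ along $\beta$ with $n \leq \abs{\beta} + 1$, then use quasi-surjectivity to choose $\widetilde{q}_i \in X$ with $d_Y(\Phi(\widetilde{q}_i), q_i) \leq L$, forcing $\widetilde{q}_0 = x_1$ and $\widetilde{q}_n = x_2$. Consecutive lifts satisfy $d_Y(\Phi(\widetilde{q}_i), \Phi(\widetilde{q}_{i+1})) \leq 2L + 1$, so the quasi-isometric embedding inequality bounds $d_X(\widetilde{q}_i, \widetilde{q}_{i+1})$ by a constant depending only on $K$ and $L$. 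Joining consecutive $\widetilde{q}_i$ by geodesics in $X$ produces the desired path $\alpha$ from $x_1$ to $x_2$, whose length is at most linear in $\abs{\beta}$; the Hausdorff distance between $\Phi(\alpha)$ and $\beta$ is controlled by combining the uniform bound on $d_X(\widetilde{q}_i, \widetilde{q}_{i+1})$ with the $(K,L)$-inequality. Taking $C$ large enough to swallow every constant appearing in (1)--(4) finishes the proof.
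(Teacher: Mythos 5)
Your proposal is correct; the paper gives no proof of this lemma at all (it declares the proof ``obvious'' and leaves it to the reader), and your discretize-transfer-reconnect argument is exactly the standard one being implicitly invoked. The only cosmetic point is in part (1): the chain $(1/C)\,d_X(x_1,x_2)-1\leq (1/K)\,d_X(x_1,x_2)-L$ as displayed only holds when $d_X(x_1,x_2)$ is large, and for small $d_X(x_1,x_2)$ you should instead compare directly with $d_Y\bigl(\Phi(x_1),\Phi(x_2)\bigr)\geq 0$ --- which is precisely the case split you already announce, so nothing is missing.
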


\begin{lem}
\label{morse2}
Let $\gamma$ be a Morse quasi-geodesic. For any constants $K\geq 1$ and $L> 0$, there is a constant $M = M(K,L)$ such that the following hold. Let $\sigma$ be a $(K,L)$--quasi-geodesic with endpoints $\gamma(t_1)$ and $\gamma(t_2)$ on $\gamma$ ($t_1<t_2$). The Hausdorff distance between $\sigma$ and $\gamma\bigl([t_1,t_2]\bigr)$ is bounded above by $M$.
\end{lem}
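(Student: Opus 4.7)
The Morse definition applied to the parameters $(K,L)$ immediately yields a constant $M_0 = M_0(K,L)$ such that $\sigma \subset N_{M_0}(\gamma)$. My plan is to upgrade this one-sided containment to the claimed two-sided Hausdorff bound between $\sigma$ and $\gamma([t_1,t_2])$ by proving separately that (i) every point of $\gamma([t_1,t_2])$ lies within a uniform distance of $\sigma$, and (ii) every point of $\sigma$ lies within a uniform distance of $\gamma([t_1,t_2])$ (not merely of $\gamma$ as a whole).

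For (i) I would use a coarse intermediate-value argument based on closest-point projection to $\gamma$. For each $p \in \sigma$, choose $s_p$ with $d(p, \gamma(s_p)) \leq M_0$. The assignment $p \mapsto s_p$ is coarsely Lipschitz: whenever $d(p,p') \leq 1$ we have $d(\gamma(s_p), \gamma(s_{p'})) \leq 2M_0 + 1$, and the quasi-geodesic lower bound for $\gamma$ converts this into a uniform bound on $|s_p - s_{p'}|$. Since $s_{\gamma(t_1)} = t_1$ and $s_{\gamma(t_2)} = t_2$, after replacing $\sigma$ by a continuous approximation with a bounded change of constants, the image $\{s_p : p \in \sigma\}$ must coarsely cover $[t_1,t_2]$, so every point of $\gamma([t_1,t_2])$ is uniformly close to some point of $\sigma$.

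For (ii) I would argue by contradiction. Suppose some $p \in \sigma$ has its nearest point on $\gamma$ equal to $\gamma(s)$ with $s > t_2$ (the case $s < t_1$ is symmetric). Prepend the short geodesic $[\gamma(s), p]$ of length at most $M_0$ to the sub-arc of $\sigma$ running from $p$ to $\gamma(t_2)$. A direct verification with the quasi-geodesic inequality shows that this concatenation is itself a $(K, L + O(M_0))$-quasi-geodesic joining the two points $\gamma(s), \gamma(t_2)$ of $\gamma$. Applying the Morse property of $\gamma$ to this new quasi-geodesic together with step (i) applied to it (forcing its projection image to coarsely cover $[t_2, s]$) then pins down $|s - t_2|$ in terms of $K$, $L$, and the Morse gauge of $\gamma$, independently of $|t_2 - t_1|$.

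The main obstacle I anticipate is the concatenation step of (ii): one must verify carefully that splicing a short geodesic onto a quasi-geodesic yields a quasi-geodesic with constants depending only on $K$, $L$, and $M_0$, and that the resulting bound on $|s - t_2|$ is genuinely absolute rather than growing with $|t_2 - t_1|$. This is the standard stability-of-Morse-quasi-geodesics phenomenon, and at the technical level it amounts to routine but delicate book-keeping combining the quasi-geodesic inequalities for $\sigma$ and $\gamma$ with the Morse gauge.
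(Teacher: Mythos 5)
The paper offers no proof of this lemma (it is declared obvious and left to the reader), so the only question is whether your argument is correct. Step (i) is: the coarse intermediate-value argument via the projection parameters $s_p$, anchored at $s_{\gamma(t_1)}=t_1$ and $s_{\gamma(t_2)}=t_2$ and made coarsely Lipschitz by the lower quasi-geodesic inequality for $\gamma$, does give $\gamma([t_1,t_2])\subset N_{M'}(\sigma)$ for a uniform $M'$. Step (ii), however, has a genuine gap, and it is exactly the one you flagged. The concatenation itself is fine: splicing the geodesic $[\gamma(s),p]$ of length at most $M_0$ onto the subarc of $\sigma$ from $p$ to $\gamma(t_2)$ yields a $(K,L+2M_0)$--quasi-geodesic $\tau$ from $\gamma(s)$ to $\gamma(t_2)$. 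But the conclusion you draw from it --- that step (i) applied to $\tau$ ``forces its projection image to coarsely cover $[t_2,s]$'' and thereby ``pins down $|s-t_2|$'' --- does not follow. Every quasi-geodesic from $\gamma(s)$ to $\gamma(t_2)$ has projection coarsely covering $[t_2,s]$, no matter how large $s$ is, so this statement imposes no upper bound on $s-t_2$ whatsoever. The only constraint one gets from bookkeeping with the quasi-geodesic inequalities is of the form $s-t_2\lesssim K^2K_\gamma^2\,(t_2-t_1)$, which grows with $|t_2-t_1|$; nothing in your outline converts this into an absolute bound.

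The standard repair is to prove the containment $\sigma\subset N_{M''}\bigl(\gamma([t_1,t_2])\bigr)$ by a connectedness argument that uses (i) as input, rather than by re-projecting and concatenating. Write $\sigma\colon[a,b]\to X$ and fix $p=\sigma(t)$. By (i) the connected set $\gamma([t_1,t_2])$ is covered by the two closed sets $\bar{N}_{M'}\bigl(\sigma([a,t])\bigr)$ and $\bar{N}_{M'}\bigl(\sigma([t,b])\bigr)$, each of which meets it (the first contains $\gamma(t_1)=\sigma(a)$, the second contains $\gamma(t_2)=\sigma(b)$); hence some point $\gamma(\tau^*)$ with $\tau^*\in[t_1,t_2]$ lies within $M'$ of both $\sigma(v_1)$ and $\sigma(v_2)$ for some $v_1\leq t\leq v_2$. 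Then $d\bigl(\sigma(v_1),\sigma(v_2)\bigr)\leq 2M'$ forces $v_2-v_1\leq K(2M'+L)$, whence $d\bigl(\sigma(t),\gamma(\tau^*)\bigr)\leq K^2(2M'+L)+L+M'$. This is an absolute constant depending only on $K$, $L$ and the data of $\gamma$, and together with (i) it completes the proof with no concatenation step at all.
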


\begin{lem}
\label{b1}
Let $\Phi$ be a $(K,L)$--quasi-isometry from $X$ to $Y$ and $\gamma$ an \emph{$M$--Morse} bi--infinite quasi-geodesic in $X$. 
\begin{enumerate}
\item If $\rho$ is a bi-infinite quasi-geodesic whose Hausdorff distance from $\gamma$ is at most $C$, then $\rho$ is $M_1$--Morse, where $M_1$ depends only on $M$ and $C$.
\item A bi-infinite quasi-geodesic $\Phi\circ\gamma$ is $M_2$--Morse, where $M_2$ depends only on $K$, $L$ and $M$. 
\item There is a $M_3$--Morse bi-infinite geodesic $\beta$ in $X$ such that the Hausdorff distance between $\gamma$ and $\beta$ is finite.
\end{enumerate} 
\end{lem}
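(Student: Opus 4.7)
For part (1), the plan is to reduce the Morse property of $\rho$ to that of $\gamma$ by endpoint extension. Given a $(K', L')$-quasi-geodesic $\sigma$ with endpoints $p_1, p_2$ on $\rho$, I pick $q_i \in \gamma$ with $d(p_i, q_i) \leq C$ and attach geodesic segments of length at most $C$ at each end, producing a path $\tilde\sigma$ with endpoints on $\gamma$. A routine check shows $\tilde\sigma$ is a $(K'', L'')$-quasi-geodesic whose constants depend only on $K', L', C$. The Morse property of $\gamma$ then puts $\tilde\sigma$ in the $M(K'', L'')$-neighborhood of $\gamma$, and hence $\sigma$ lies in the $(M(K'', L'') + 2C)$-neighborhood of $\rho$. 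The resulting gauge $M_1$ depends only on $M$ and $C$.

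For part (2), observe first that $\Phi \circ \gamma$ is a bi-infinite quasi-geodesic whose constants depend only on $K$, $L$, and the constants of $\gamma$. Given a $(K', L')$-quasi-geodesic $\sigma$ in $Y$ with endpoints on $\Phi \circ \gamma$, I would pull $\sigma$ back to a quasi-geodesic in $X$ with endpoints on $\gamma$. Concretely, fix a coarse inverse $\Psi : Y \to X$ of $\Phi$ (which is itself a quasi-isometry with constants depending only on $K, L$), sample $\sigma$ at unit parameter intervals, apply $\Psi$, and join consecutive images by geodesic arcs in $X$. This yields a continuous $(K'', L'')$-quasi-geodesic in $X$ whose endpoints are within bounded distance of $\gamma$; an endpoint extension as in (1) then gives a quasi-geodesic with endpoints on $\gamma$. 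Applying the Morse property of $\gamma$ places this path in a bounded neighborhood of $\gamma$. Pushing forward via $\Phi$ using Lemma \ref{b2}(1), together with the fact that $\Phi \circ \Psi$ is close to the identity on $Y$, shows $\sigma$ lies in a bounded neighborhood of $\Phi \circ \gamma$ with bound depending only on $K$, $L$, $M$, and the input $(K', L')$.

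For part (3), I would use properness of $X$ together with an Arzel\`a–Ascoli argument. Let $\beta_n$ be a geodesic segment from $\gamma(-n)$ to $\gamma(n)$. By Lemma \ref{morse2}, $\beta_n$ lies within Hausdorff distance $M(1,0)$ of $\gamma([-n,n])$, so some parameter $t_n$ satisfies $d(\beta_n(t_n), \gamma(0)) \leq M(1,0)$. Reparametrize by $\tilde\beta_n(s) = \beta_n(s + t_n)$, so every $\tilde\beta_n(0)$ sits in a fixed compact ball about $\gamma(0)$. Because $\gamma$ is a bi-infinite quasi-geodesic, $d(\gamma(0), \gamma(\pm n)) \to \infty$, which forces the parameter intervals of $\tilde\beta_n$ to exhaust $\RR$. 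The maps $\tilde\beta_n$ are $1$-Lipschitz and $X$ is proper, so a diagonal subsequence converges uniformly on compacta to a bi-infinite geodesic $\beta$. Passing the Hausdorff bound to the limit shows $\beta$ is within Hausdorff distance $M(1,0)$ of $\gamma$, so part (1) supplies a Morse gauge $M_3$ depending only on $M$.

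The main obstacle I anticipate is in (2): the coarse inverse $\Psi$ need not be continuous, so $\Psi \circ \sigma$ is not literally a path. Replacing it with the sampling-and-joining approximation while keeping the resulting quasi-geodesic constants controlled purely by $K$, $L$, $K'$, $L'$ (and not by the length of $\sigma$ or by $M$) requires careful bookkeeping, and is what ensures the final dependence of $M_2$ is only on $K$, $L$, and $M$.
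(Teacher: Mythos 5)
Your proposal is correct and takes essentially the same route as the paper: for part (3) the paper uses exactly the same limit-of-geodesic-segments argument (Hausdorff bounds from Lemma~\ref{morse2}, properness of $X$ to extract the bi-infinite geodesic $\beta$, then part (1) for its Morse gauge), and for parts (1) and (2) the paper simply cites Lemma~2.5 of Charney--Sultan, whose proof is precisely the endpoint-extension and coarse-inverse/sampling argument you spell out. No gaps.
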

\begin{proof}
We obtain (1) and (2) from Lemma 2.5 in \cite{MR3339446} We only need to prove the statement (3). By Lemma \ref{morse2}, there is a constant $D>0$ such that the following holds. Let $\sigma$ be the geodesic with endpoints $\gamma(t_1)$, $\gamma(t_2)$ on $\gamma$ ($t_1<t_2$). Then the Hausdorff distance between $\sigma$ and $\gamma\bigl([t_1,t_2]\bigr)$ is bounded above $D$. For each $n$, let $\beta_n$ be geodesic segment from $\gamma(-n)$ to $\gamma(n)$. Then the Hausdorff distance between $\beta_n$ and $\gamma\bigl([-n,n]\big)$ is bounded above by $D$. Since $X$ is proper, then there is a bi-infinite geodesic $\beta$ such that the Hausdorff distance between $\beta$ is bounded above by $D+1$. The Morse property of $\beta$ is obtained from (1) easily.
\end{proof}

\section{The divergence spectrum}
\label{spectrum}
In this section, we introduce the concept of divergence spectra of geodesic spaces as well as finitely generated groups. We also prove that the divergence spectrum is a quasi-isometry invariant.
\begin{defn}[Lower divergence]
Let $\alpha$ be a bi-infinite geodesic. For any $r>0$ and $t\in \RR$, if there is no path from $\alpha(t-r)$ to $\alpha(t+r)$ that lies outside the open ball of radius $r$ about $\alpha(t)$, we define $\rho_{\alpha}(r,t)=\infty$. Otherwise, we let $\rho_{\alpha}(r,t)$ denote the infimum of the lengths of all paths from $\alpha(t-r)$ to $\alpha(t+r)$ that lies outside the open ball of radius $r$ about $\alpha(t)$. Define the \emph{lower divergence} of $\alpha$ to be the growth rate of the following function: \[ldiv_{\alpha}(r)=\inf_{t\in \RR}\rho_{\alpha}(r,t)\] 
\end{defn}

\begin{rem}
We remark that the concept of geodesic lower divergence was first introduced by Charney--Sultan \cite{MR3339446} to study contracting boundaries of $\CAT(0)$ spaces. We now use the geodesic lower divergence concept to define a new quasi-isometry invariant, called the divergence spectrum.
\end{rem}

\begin{defn}[The divergence spectrum]
The \emph{divergence spectrum} of a geodesic space $X$, denoted $S_X$, is a collection of functions from $[0, \infty)$ to $[0, \infty]$ such that a function $f$ belongs to $S_X$ if there is a Morse bi-infinite geodesic $\gamma$ with the lower divergence function equivalent to $f$.
\end{defn}

\begin{thm}
If $X$ and $Y$ are two quasi-isometric spaces, then they have the same divergence spectrum.
\end{thm}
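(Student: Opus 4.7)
The plan is to transfer each Morse bi-infinite geodesic across a quasi-isometry $\Phi\colon X\to Y$ without changing its lower divergence up to $\sim$, then conclude by symmetry. Given an $M$-Morse bi-infinite geodesic $\gamma$ in $X$, Lemma~\ref{b1}(2) shows that $\Phi\circ\gamma$ is a Morse bi-infinite quasi-geodesic in $Y$, and Lemma~\ref{b1}(3) then provides a Morse bi-infinite geodesic $\beta$ in $Y$ at finite Hausdorff distance from $\Phi\circ\gamma$. It will therefore suffice to prove $\ldiv_\beta\sim\ldiv_\gamma$; applying the same construction to a quasi-inverse of $\Phi$ will then yield $S_X=S_Y$.

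To compare the two lower divergence functions, I would first align parameters: because $\beta$ and $\Phi\circ\gamma$ are both quasi-geodesics in $Y$ at bounded Hausdorff distance, a standard argument produces a quasi-isometry $\tau\colon\RR\to\RR$ and a constant $D$ with $d_Y\bigl(\beta(\tau(t)),\Phi(\gamma(t))\bigr)\leq D$ for every $t\in\RR$. To establish $\ldiv_\beta\preceq\ldiv_\gamma$, I would fix a large $r$ and a $t$ almost-realising $\rho_\gamma(r,t)$, then use Lemma~\ref{qip}(3) to push a near-optimal path avoiding $\ball{\gamma(t)}{r}$ forward to a path in $Y$ of length at most $C\ldiv_\gamma(r)+C$; the quasi-isometry inequality forces this image path to avoid a ball of radius $r/C-\text{const.}$ about $\Phi(\gamma(t))$, and hence about $\beta(\tau(t))$ after shrinking by $D$. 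Concatenating with short geodesic segments of length at most $D$ joining $\Phi(\gamma(t\pm r))$ to $\beta(s\pm r')$ for an appropriate $r'$ linearly comparable to $r$ would produce a competitor for $\rho_\beta$, giving the domination. The reverse inequality is symmetric, using Lemma~\ref{qip}(4) to pull paths back via a quasi-inverse of $\Phi$.

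The hard part will be controlling the interplay among the multiplicative distortion of $\Phi$, the additive constant $D$, and the reparametrisation $\tau$, while verifying that the short correcting segments added at the endpoints never re-enter the forbidden ball around the midpoint. This last point should follow because those corrections have length at most $D$ and are anchored near the endpoints $\beta(s\pm r')$, which by the Morse property (Lemma~\ref{morse2}) lie at distance comparable to $r$ from the midpoint. All such corrections produce only multiplicative and additive constants, which are absorbed by the equivalence relation $\sim$.
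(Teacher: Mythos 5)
Your proposal follows essentially the same route as the paper's proof: transfer a Morse bi-infinite geodesic via Lemma~\ref{b1} to a Morse geodesic $\beta$ in $Y$ at finite Hausdorff distance from $\Phi\circ\gamma$, align parameters, push (resp.\ pull) near-optimal paths using Lemma~\ref{qip}(3)--(4), and trim with correcting segments along $\beta$; the paper implements your reparametrisation $\tau$ by a per-instance subdivision argument and works at the inflated scale $Mr$ with $M=4C$ to absorb the radius loss, but this is the same idea. The only imprecision is that the correcting arcs from $\beta(\tau(t\pm r))$ to $\beta(s\pm r')$ have length $O(r)$ rather than at most $D$, which is harmless since linear terms are absorbed by the relation $\sim$.
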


\begin{proof}
Let $S_X$ and $S_Y$ be divergence spectra of $X$ and $Y$ respectively. We only need to prove $S_X\subset S_Y$ and the proof for the opposite direction is almost identical. Let $\Phi\!:X \to Y$ be a $(K,L)$--quasi-isometry. Let $f$ be any function in $S_X$. Then there is a Morse bi-infinite geodesic $\alpha$ in $X$ with the lower divergence equivalent to $f$. By Lemma \ref{b1}, there is a number $D>0$ and a Morse bi-infinite geodesic $\beta$ in $Y$ such that the Hausdorff distance between $\Phi\circ\alpha$ and $\beta$ is less than $D$. We need to prove that the lower divergence of $\beta$ is equivalent to $f$. The following two lemmas complete the proof of this theorem.
\end{proof}

\begin{lem}
The lower divergence function of $\beta$ is dominated by the lower divergence function of $\alpha$.
\end{lem}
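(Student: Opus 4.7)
The plan is to show $\ldiv_\beta(r) \leq A\,\ldiv_\alpha(Br) + Er + F$ for appropriate constants, by constructing an efficient detour on $\beta$ from an efficient detour on $\alpha$ at a comparable scale and transporting it through $\Phi$. Fix $r > 0$ large and set $r' = K(r + L + C + 3D + 1)$, where $C$ is the constant from Lemma \ref{qip} applied to the $(K,L)$-quasi-isometry $\Phi$ and $D$ bounds the Hausdorff distance between $\Phi\circ\alpha$ and $\beta$. Assuming $\ldiv_\alpha(r') < \infty$ (otherwise there is nothing to prove), for $\epsilon > 0$ I pick $t_0 \in \RR$ with $\rho_\alpha(r', t_0) \leq \ldiv_\alpha(r') + \epsilon$ and a path $\eta$ in $X$ from $\alpha(t_0 - r')$ to $\alpha(t_0 + r')$ lying outside $B(\alpha(t_0), r')$ with $|\eta| \leq \ldiv_\alpha(r') + 2\epsilon$.

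Next I transport $\eta$ to $Y$. By Lemma \ref{qip}(3), there is a path $\eta'$ in $Y$ from $\Phi(\alpha(t_0 - r'))$ to $\Phi(\alpha(t_0 + r'))$ with $|\eta'| \leq C|\eta| + C$ and Hausdorff distance at most $C$ from $\Phi(\eta)$. Every $x \in \eta$ satisfies $d_X(x, \alpha(t_0)) \geq r'$, so the quasi-isometry lower estimate forces every point of $\eta'$ to lie at distance at least $r'/K - L - C$ from $\Phi(\alpha(t_0))$. Choose $t' \in \RR$ with $d_Y(\beta(t'), \Phi(\alpha(t_0))) \leq D$ and $t_\pm \in \RR$ with $d_Y(\beta(t_\pm), \Phi(\alpha(t_0 \pm r'))) \leq D$; the choice of $r'$ guarantees $|t_\pm - t'| \geq r$ and that every point of $\eta'$ lies outside $B(\beta(t'), r)$.

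After possibly relabeling to arrange $t_- < t' < t_+$, so that $t_- \leq t' - r$ and $t' + r \leq t_+$, I build a detour from $\beta(t' - r)$ to $\beta(t' + r)$ by concatenating: the sub-arc of $\beta$ from $\beta(t' - r)$ to $\beta(t_-)$; a geodesic of length at most $D$ from $\beta(t_-)$ to $\Phi(\alpha(t_0 - r'))$; the path $\eta'$; a geodesic of length at most $D$ from $\Phi(\alpha(t_0 + r'))$ to $\beta(t_+)$; and the sub-arc of $\beta$ from $\beta(t_+)$ to $\beta(t' + r)$. The $\beta$-sub-arcs lie outside $B(\beta(t'), r)$ by parameterization, the short bridges stay within $D$ of endpoints that are at distance at least $r + 2D$ from $\beta(t')$, and $\eta'$ avoids the ball by the estimate above. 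Totaling lengths and bounding $t_+ - t_- \leq 2Kr' + L + 2D$ yields $\rho_\beta(r, t') \leq C\,\ldiv_\alpha(r') + 2Kr' - 2r + 2D + 2C\epsilon + C$. Letting $\epsilon \to 0$ and substituting the definition of $r'$ gives the desired bound $\ldiv_\beta(r) \leq \rho_\beta(r, t') \leq A\,\ldiv_\alpha(Br) + Er + F$.

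The main technical point I expect to work through carefully is the coherent ordering $t_- < t' < t_+$ along $\beta$. This uses that $\Phi \circ \alpha$ is itself a quasi-geodesic in $Y$ whose Hausdorff distance from the geodesic $\beta$ is at most $D$, so that for $r'$ sufficiently large the $\beta$-projections of $\Phi(\alpha(t_0 \pm r'))$ must lie on opposite sides of any $\beta$-projection of $\Phi(\alpha(t_0))$; a parameterization comparison in the spirit of Lemma \ref{morse2}, using the Morse property of $\beta$ supplied by Lemma \ref{b1}, handles this.
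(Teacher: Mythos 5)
Your proposal is correct and follows essentially the same strategy as the paper's proof: push a (near-)optimal avoiding path for $\alpha$ at a linearly rescaled radius through $\Phi$, use the Hausdorff $D$-closeness of $\Phi\circ\alpha$ to $\beta$ to locate a center $\beta(t')$ and endpoint parameters $t_\pm$, and close up the detour with short bridges and subsegments of $\beta$. The one step you defer --- that $t'$ lies between $t_-$ and $t_+$ --- is exactly what the paper establishes with its explicit subdivision of the parameter interval, and your proposed connectedness/parameterization argument is the same mechanism, so there is no substantive gap.
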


\begin{proof}
Let $C=C(K,L)$ be the constant in Lemma \ref{b2}. Let $D_1=2C^2(D+1)+C+D+1$, $D_2=C+2D+D_1+1$ and $M=4C$. We are going to prove that for each $r>D_2$ \[ldiv_{\beta}(r)\leq C~ldiv_{\alpha}(Mr)+(2CM+3)r.\]

For each $t\in\RR$, let $\gamma$ be an arbitrary path from $\alpha(t-Mr)$ to $\alpha(t+Mr)$ that lies outside the open ball of radius $Mr$ about $\alpha(t)$. Since the Hausdorff distance between $\Phi\circ\alpha$ and $\beta$ is less than $D$, then there are $s_1$, $s_2$ in $\RR$ such that $d_Y\bigl(\Phi\circ\alpha(t-Mr), \beta(s_1)\bigr)<D$ and $d_Y\bigl(\Phi\circ\alpha(t+Mr), \beta(s_2)\bigr)<D$. Let $I$ be a subinterval of $\RR$ with endpoints $s_1$ and $s_2$. We need to show that there is an $s\in I$ such that $d_Y\bigl(\Phi\circ\alpha(t), \beta(s)\bigr)<D_1$.%$\Phi\circ\alpha\bigl([t-Mr,t+Mr]\bigr)\subset N_{D_1}\bigl(\beta(I)\bigr)$. 

We subdivide $I$ into m subintervals with lengths less than 1 by $(m+1)$ numbers as follows:
\[s_1=w_0<w_1<\cdots<w_m=s_2 \text{ if } s_1<s_2\] 
or \[s_2=w_0<w_1<\cdots<w_m=s_1 \text{ if } s_2<s_1.\] 
Since the Hausdorff distance between $\Phi\circ\alpha$ and $\beta$ is less than $D$, then there is some $w'_i$ in $\RR$ such that $d_Y\bigl(\Phi\circ\alpha(w'_i), \beta(w_i)\bigr)<D$. We choose $w'_0=t-Mr$, $w'_m=t+Mr$ if $s_1<s_2$ and $w'_0=t+Mr$, $w'_m=t-Mr$ if $s_2<s_1$. Let $I_i$ be a subinterval of $\RR$ with end points $w'_{i-1}$ and $w'_i$. Obviously, $[t-Mr, t+Mr] \subset \bigcup I_i$. Thus, $t$ lies in some $I_i$. Therefore, \begin{align*} d_Y\bigl(\Phi\circ\alpha(t), \beta(w_i)\bigr)& \leq d_Y\bigl(\Phi\circ\alpha(t), \Phi\circ\alpha(w'_i)\bigr)+d_Y\bigl(\Phi\circ\alpha(w'_i), \beta(w_i)\bigr) \\&\leq Cd_X\bigl(\alpha(t), \alpha(w'_i)\bigr)+C+D\\&\leq C\abs{t-w'_i}+C+D\\&\leq C\abs{w'_i-w'_{i-1}}+C+D.\end{align*}
Also, \begin{align*} \abs{w'_i-w'_{i-1}}&= d_X\bigl(\alpha(w'_i), \alpha(w'_{i-1})\bigr)\\&\leq Cd_Y\bigl(\Phi\circ\alpha(w'_i), \Phi\circ\alpha(w'_{i-1})\bigr)+C\\&\leq C[(d_Y\bigl(\Phi\circ\alpha(w'_i), \beta(w_i)\bigr)+d_Y\bigl(\beta(w_i), \beta(w_{i-1})\bigr)+d_Y\bigl(\beta(w_{i-1}), \Phi\circ\alpha(w'_{i-1})\bigr)]+C\\& \leq C(D+1+D)+C\\& \leq 2C(D+1) \end{align*}
Thus, $d_Y\bigl(\Phi\circ\alpha(t), \beta(w_i)\bigr)\leq 2C^2(D+1)+C+D<D_1$. Let $s=w_i$ and we now show that $3r\leq \abs{s-s_1}\leq (CM+1)r$.
In fact, \begin{align*} \abs{s-s_1}&= d_Y\bigl(\beta(s), \beta(s_1)\bigr)\\&\leq d_Y\bigl(\beta(s), \Phi\circ\alpha(t)\bigr)+d_Y\bigl(\Phi\circ\alpha(t), \Phi\circ\alpha(t-Mr)\bigr)+d_Y\bigl(\Phi\circ\alpha(t-Mr), \Phi\circ\alpha(s_1)\bigr)\\&\leq D_1+CMr+C+D\\&\leq (CM+1)r \end{align*}
and \begin{align*} \abs{s-s_1}&= d_Y\bigl(\beta(s), \beta(s_1)\bigr)\\&\geq d_Y\bigl(\Phi\circ\alpha(t),\Phi\circ\alpha(t-Mr)\bigr)-d_Y\bigl(\Phi\circ\alpha(t), \beta(s)\bigr)-d_Y\bigl(\Phi\circ\alpha(t-Mr), \Phi\circ\alpha(s_1)\bigr)\\&\geq \frac{Mr}{C}-1-D_1-D\\&\geq 4r-1-D_1-D\geq 3r \end{align*}

Similarly, $3r\leq \abs{s-s_2}\leq (CM+1)r$.

By Lemma \ref{b2}, there is a path from $\Phi\circ\alpha(t-Mr)$ to $\Phi\circ\alpha(t+Mr)$ such that $\abs{\gamma_1}\leq C\abs{\gamma}+C$ and the Hausdorff distance between $\Phi\circ\gamma$ and $\gamma_1$ is less than $C$. For each $y\in \gamma_1$, there is some $x\in \gamma$ such that $d_Y\bigl(y, \Phi(x)\bigr)<C$. Therefore, \begin{align*} d_Y\bigl(y, \beta(s)\bigr)& \geq d_Y\bigl(\Phi(x), \Phi\circ\alpha(t)\bigr)-d_Y\bigl(\Phi(x),y\bigr)-d_Y\bigl(\beta(s),\Phi\circ\alpha(t)\bigr) \\&\geq \frac{1}{C}d_X\bigl(x, \alpha(t)\bigr)-1-C-D_1\\&\geq \frac{Mr}{C}-C-D_1-1\\&\geq 4r-C-D_1-1> 3r.\end{align*}
Thus, $\gamma_1$ lies outside the open ball with radius $3r$ about $\beta(s)$. Let $\gamma_2$ be the geodesic connecting $\Phi\circ\alpha(t-Mr)$ and $\beta(s_1)$, then $\abs{\gamma_2}\leq D$ and $\gamma_2$ lie outside the open ball with radius $2r$ about $\beta(s)$. Let $\gamma_3$ be the geodesic connecting $\Phi\circ\alpha(t+Mr)$ and $\beta(s_2)$, then $\abs{\gamma_3}\leq D$ and $\gamma_3$ lies outside the open ball with radius $2r$ about $\beta(s)$. Let $s_3$ and $s_4$ be two different points in $I$ such that $\abs{s-s_3}=\abs{s-s_4}=r$. We choose $s_3$ lies between $s$, $s_1$ and $s_4$ lies between $s$, $s_2$. Since $\abs{s-s_1}\leq (CM+1)r$, then there is a geodesic segment $\gamma_4$ from $\beta(s_1)$ to $\beta(s_3)$ that lies outside the open ball with radius $r$ about $\beta(s)$ and the $\abs{\gamma_4}\leq (CM+1)r$. Since $\abs{s-s_2}\leq (CM+1)r$, then there is a geodesic segment $\gamma_5$ from $\beta(s_2)$ to $\beta(s_4)$ that lies outside the open ball with radius $r$ about $\beta(s)$ and the $\abs{\gamma_5}\leq (CM+1)r$. Let $\gamma_6=\gamma_4\cup\gamma_2\cup\gamma_1\cup \gamma_3\cup \gamma_5$. Then, $\gamma_6$ is a path from $\beta(s_3)$ to $\beta(s_4)$ that lies outside the open ball with radius $r$ about $\beta(s)$.

Moreover, \[\abs{\gamma_6}\leq 2(CM+1)r+2D+C\abs{\gamma}+C\leq C\abs{\gamma}+(2CM+3)r.\]
Therefore, \[ldiv_{\beta}(r)\leq C\abs{\gamma}+(2CM+3)r.\] 
Since $\gamma$ is an arbitrary path from $\alpha(t-Mr)$ to $\alpha(t+Mr)$ that lies outside the open ball with radius $r$ about $\alpha(t)$, then 
\[ldiv_{\beta}(r)\leq C~\rho_{\alpha}(Mr,t)+(2CM+3)r.\]
Since $t$ is an arbitrary number and $ldiv_{\alpha}(Mr)=\inf_{t\in\RR}\rho_{\alpha}(Mr,t)$, then 
\[ldiv_{\beta}(r)\leq C~ldiv_{\alpha}(Mr)+(2CM+3)r.\] 
Therefore, \[ldiv_{\beta}\preceq ldiv_{\alpha}.\]
\end{proof}

\begin{lem}
\label{lm1}
The lower divergence function of $\alpha$ is dominated by the lower divergence function of $\beta$. 
\end{lem}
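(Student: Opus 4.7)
The plan is to deduce this as the symmetric counterpart of the preceding lemma, by invoking that lemma along a quasi-inverse of $\Phi$.

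First I would fix a quasi-inverse $\Psi\!:Y\to X$ of the $(K,L)$-quasi-isometry $\Phi$. Standard facts about quasi-isometries supply constants $K'$, $L'$, and $E$ depending only on $K$ and $L$ such that $\Psi$ is a $(K',L')$-quasi-isometry and $d_X\bigl(\Psi\circ\Phi(x),x\bigr)\leq E$ for every $x\in X$, with an analogous bound on $\Phi\circ\Psi$ on $Y$. In particular, by Lemma \ref{b2} applied to $\Psi$, paths and distances in $Y$ can be transported to paths and distances in $X$ up to multiplicative and additive constants depending only on $K$ and $L$.

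Next I would verify that $\Psi\circ\beta$ lies within bounded Hausdorff distance of $\alpha$. Since $\Psi\circ\Phi$ is $E$-close to the identity on $X$, the image $\Psi\circ\Phi\circ\alpha$ is at Hausdorff distance at most $E$ from $\alpha$. Because $\Psi$ is coarsely Lipschitz, the hypothesis that the Hausdorff distance between $\Phi\circ\alpha$ and $\beta$ is less than $D$ transports to a uniform bound on the Hausdorff distance between $\Psi\circ\Phi\circ\alpha$ and $\Psi\circ\beta$. Combining these two estimates by the triangle inequality yields a constant $D'$ such that the Hausdorff distance between $\Psi\circ\beta$ and $\alpha$ is less than $D'$.

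Then I would apply the preceding lemma with the roles of $(X,\Phi,\alpha)$ and $(Y,\beta)$ swapped: the hypotheses become that $\Psi\!:Y\to X$ is a quasi-isometry, $\beta$ is a Morse bi-infinite geodesic in $Y$ (playing the role of the source geodesic), and $\alpha$ is a Morse bi-infinite geodesic in $X$ (playing the role of the target geodesic) whose Hausdorff distance from $\Psi\circ\beta$ is less than $D'$. The conclusion of that lemma in this swapped setting is precisely $ldiv_\alpha\preceq ldiv_\beta$, which is the desired statement.

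The main issue worth flagging is confirming that the preceding lemma's proof is genuinely symmetric in its hypotheses on the source and target geodesics, since superficially one might worry that the argument exploits that $\beta$ was produced from $\Phi\circ\alpha$. A brief inspection shows that the calculation there only uses the geodesic structure of the target geodesic (to evaluate $\abs{s-s_i}$ as a distance along the geodesic), the triangle inequality, and the quasi-isometry constants; both geodesics enter the argument symmetrically, so the reduction to the preceding lemma with the roles reversed is valid.
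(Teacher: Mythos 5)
Your proof is correct, but it takes a genuinely different route from the paper. The paper proves this lemma by a second explicit computation that mirrors the proof of the preceding lemma with all estimates run in the opposite direction: it takes an arbitrary path near $\beta$ avoiding a large ball, transports it back to $X$ via Lemma \ref{b2}, relocates the center point on $\alpha$ by the same subdivision argument, and re-derives all the radius and length bounds with a fresh set of constants $D_1$, $D_2$, $M$. You instead observe that the preceding lemma is really a general statement --- for any quasi-isometry $\Phi\colon X\to Y$ and any pair of bi-infinite geodesics $\alpha\subset X$, $\beta\subset Y$ with $\Phi\circ\alpha$ at bounded Hausdorff distance from $\beta$, the lower divergence of the target geodesic is dominated by that of the source geodesic --- and apply it to a quasi-inverse $\Psi\colon Y\to X$ with the roles of $\alpha$ and $\beta$ exchanged. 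Your check that the preceding proof uses nothing about how $\beta$ arose from $\alpha$ (only the quasi-isometry constants, the unit-speed parametrizations of both geodesics, and the Hausdorff bound) is the one point that needs verification, and it does hold. What your approach buys is economy: no second computation and no second family of constants to track. What the paper's approach buys is self-containment --- it never needs to introduce a quasi-inverse or promote the first lemma to a standalone general statement, since both lemmas are stated only for the specific $\Phi$, $\alpha$, $\beta$ fixed in the proof of the ambient theorem. If you wanted to write your version up formally, the cleanest fix would be to restate the first lemma with explicit quantification over $(\Phi,\alpha,\beta,D)$ so that the swapped application is literal rather than by inspection of the proof.
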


\begin{proof}
Let $C=C(K,L)$ be the constant in Lemma \ref{b2}. Let $D_1=2C+3D+1$, $D_2=2CD+C+D+D_1+2$ and $M=4C$. We are going to prove that for each $r>D_2$ \[ldiv_{\alpha}(r)\leq C~ldiv_{\beta}(Mr)+(2CM+C+2)r.\]

For each $s\in\RR$, let $\gamma$ be an arbitrary path from $\beta(s-Mr)$ to $\beta(s+Mr)$ that lies outside the open ball of radius $Mr$ about $\beta(s)$. Since the Hausdorff distance between $\Phi\circ\alpha$ and $\beta$ is less than $D$, then there are $t_1$, $t_2$ in $\RR$ such that $d_Y\bigl(\Phi\circ\alpha(t_1), \beta(s-Mr)\bigr)<D$ and $d_Y\bigl(\Phi\circ\alpha(t_2), \beta(s+Mr)\bigr)<D$. Let $J$ be a subinterval of $\RR$ with endpoints $t_1$ and $t_2$. We need to show that there is a $t\in J$ such that $d_Y\bigl(\Phi\circ\alpha(t), \beta(s)\bigr)<D_1$.%$\Phi\circ\alpha\bigl([t-Mr,t+Mr]\bigr)\subset N_{D_1}\bigl(\beta(I)\bigr)$. 

We subdivide $J$ into m subintervals with lengths less than 1 by $(m+1)$ numbers as follows:
\[t_1=w_0<w_1<\cdots<w_m=t_2 \text{ if } t_1<t_2\] 
or \[t_2=w_0<w_1<\cdots<w_m=t_1 \text{ if } t_2<t_1.\] 
Since the Hausdorff distance between $\Phi\circ\alpha$ and $\beta$ is less than $D$, then there is some $w'_i$ in $\RR$ such that $d_Y\bigl(\Phi\circ\alpha(w_i), \beta(w'_i)\bigr)<D$. We choose $w'_0=s-Mr$, $w'_m=s+Mr$ if $t_1<t_2$ and $w'_0=s+Mr$, $w'_m=s-Mr$ if $t_2<t_1$. Let $J_i$ be a subinterval of $\RR$ with end points $w'_{i-1}$ and $w'_i$. Obviously, $[s-Mr, s+Mr] \subset \bigcup J_i$. Thus, $s$ lies in some $J_i$. Therefore, \begin{align*} d_Y\bigl(\beta(s), \Phi\circ\alpha(w_i)\bigr)& \leq d_Y\bigl(\beta(s), \beta(w'_i)\bigr)+d_Y\bigl(\beta(w'_i), \Phi\circ\alpha(w_i)\bigr) \\&\leq \abs{s-w'_i}+D \\&\leq \abs{w'_i-w'_{i-1}}+D.\end{align*}

Also, \begin{align*} \abs{w'_i-w'_{i-1}}&= d_Y\bigl(\beta(w'_i), \beta(w'_{i-1})\bigr)\\&\leq d_Y\bigl(\beta(w'_i), \Phi\circ\alpha(w_i)\bigr)+d_Y\bigl(\Phi\circ\alpha(w_i), \Phi\circ\alpha(w_{i-1})\bigr)+d_Y\bigl(\Phi\circ\alpha(w_{i-1}), \beta(w'_{i-1})\bigr)\\&\leq D+C\abs{w_i-w_{i-1}}+C+D\leq 2C+2D\end{align*}
Thus, $d_Y\bigl(\beta(s), \Phi\circ\alpha(w_i)\bigr)\leq 2C+3D<D_1$. Let $t=w_i$ and we now show that $3r\leq \abs{t-t_1}\leq (CM+C+1)r$. In fact, \begin{align*} \abs{t-t_1}&= d_X\bigl(\alpha(t), \alpha(t_1)\bigr)\\&\leq Cd_Y\bigl(\Phi\circ\alpha(t),\Phi\circ\alpha(t_1)\bigr)+C\\&\leq C\biggl(d_Y\bigl(\Phi\circ\alpha(t), \beta(s)\bigr)+d_Y\bigl(\beta(s), \beta(s-Mr)\bigr)+d_Y\bigl(\beta(s-Mr),\Phi\circ\alpha(t_1)\bigr)\biggr)+C\\&\leq C(D_1+Mr+D)+C\\&\leq C(M+1)r+r\leq (CM+C+1)r 
\end{align*}
and \begin{align*} \abs{t-t_1}&= d_X\bigl(\alpha(t), \alpha(t_1)\bigr)\\&\geq \frac{1}{C}d_Y\bigl(\Phi\circ\alpha(t),\Phi\circ\alpha(t_1)\bigr)-1\\&\geq \frac{1}{C}\biggl(d_Y\bigl(\beta(s), \beta(s-Mr)\bigr)-d_Y\bigl(\beta(s),\Phi\circ\alpha(t)\bigr)-d_Y\bigl(\beta(s-Mr),\Phi\circ\alpha(t_1)\bigr)\biggr)-1\\&\geq \frac{1}{C}(Mr-D_1-D)-1\\&\geq \frac{Mr}{C}-\frac{D_1+D}{C}-1\\&\geq 4r-\frac{D_1+D}{C}-1\geq 3r \end{align*}

Similarly, $3r\leq \abs{t-t_2}\leq (CM+C+1)r$.

Let $\gamma_2$ be the geodesic connecting $\Phi\circ\alpha(t_1)$ and $\beta(s-Mr)$, then $\abs{\gamma_2}\leq D$ and $\gamma_2$ lies outside the open ball with radius $Mr-D$ about $\beta(s)$. Let $\gamma_3$ be the geodesic connecting $\Phi\circ\alpha(t_2)$ and $\beta(s+Mr)$, then $\abs{\gamma_3}\leq D$ and $\gamma_3$ lies outside the open ball with radius $Mr-D$ about $\beta(s)$. Let $\gamma_4=\gamma_2\cup\gamma\cup\gamma_3$. Then, $\gamma_4$ is a path from $\Phi\circ\alpha(t_1)$ to $\Phi\circ\alpha(t_2)$ that lies outside the open ball with radius $Mr-D$ about $\beta(s)$.

By Lemma \ref{b2}, there is a path $\gamma_1$ from $\alpha(t_1)$ to $\alpha(t_2)$ such that $\abs{\gamma_1}\leq C\abs{\gamma_4}+C$ and the Hausdorff distance between $\Phi\circ\gamma_1$ and $\gamma_4$ is less than $C$. For each $x\in\gamma_1$, there is $y\in\gamma_4$ such that $d_Y\bigl(\Phi(x),y\bigr)<C$. Therefore, \begin{align*} d_X\bigl(x, \alpha(t)\bigr)& \geq \frac{1}{C}d_Y\bigl(\Phi(x), \Phi\circ\alpha(t)\bigr)-1\\&\geq \frac{1}{C}\biggl(d_Y\bigl(y, \beta(s)\bigr)-d_Y\bigl(y,\Phi(x)\bigr)-d_Y\bigl(\beta(s),\Phi\circ\alpha(t)\bigr)\biggr)-1\\&\geq \frac{1}{C}(Mr-D-C-D_1)-1\\&\geq \frac{Mr}{C}-\frac{D+D_1}{C}-2> 3r.\end{align*}
Thus, $\gamma_1$ lies outside the open ball with radius $3r$ about $\alpha(t)$. Let $t_3$ and $t_4$ be two different points in $J$ such that $\abs{t-t_3}=\abs{t-t_4}=r$. We choose $t_3$ lies between $t$, $t_1$ and $t_4$ lies between $t$, $t_2$. Since $\abs{t-t_1}\leq (CM+C+1)r$, then there is a geodesic segment $\gamma_5$ from $\alpha(t_1)$ to $\alpha(t_3)$ that lies outside the open ball with radius $r$ about $\alpha(t)$ and the $\abs{\gamma_5}\leq (CM+C+1)r$. Since $\abs{t-t_2}\leq (CM+C+1)r$, then there is a geodesic segment $\gamma_6$ from $\alpha(t_2)$ to $\alpha(t_4)$ that lies outside the open ball with radius $r$ about $\alpha(t)$ and the $\abs{\gamma_6}\leq (CM+C+1)r$. Let $\gamma_7=\gamma_5\cup\gamma_1\cup\gamma_6$. Then, $\gamma_7$ is a path from $\alpha(t_3)$ to $\alpha(t_4)$ that lies outside the open ball with radius $r$ about $\alpha(t)$.

Moreover, \begin{align*}\abs{\gamma_7}&\leq 2(CM+C+1)r+C\abs{\gamma_4}+C\\&\leq C\bigl(\abs{\gamma}+2D\bigr)+2(CM+C+1)r+C\\&\leq C\abs{\gamma}+2(CM+C+2)r.\end{align*}
Therefore, \[ldiv_{\alpha}(r)\leq C\abs{\gamma}+2(CM+C+1)r.\]
Since $\gamma$ is an arbitrary path from $\beta(s-Mr)$ to $\beta(s+Mr)$ that lies outside the open ball with radius about $\beta(s)$, then 
\[ldiv_{\alpha}(r)\leq C~\rho_{\beta}(Mr,s)+2(CM+C+1)r.\]
Since $s$ is an arbitrary number greater and $ldiv_{\beta}(Mr)=\inf_{s\in\RR}\rho_{\beta}(Mr,s)$, then 
\[ldiv_{\alpha}(r)\leq C~ldiv_{\beta}(Mr)+2(CM+C+1)r.\]
Thus, $ldiv_{\alpha}\preceq ldiv_{\beta}$.
\end{proof}

We are now ready to define the divergence spectrum of a finitely generated group.
\begin{defn} 
Let $G$ be a finitely generated group. We define \emph{the divergence spectrum} of $G$, denoted \emph{$S_G$} to be the divergence spectrum of the Cayley graph $\Gamma(G,S)$ for some finite generating set $S$.
\end{defn}

\section{Some connections between the divergence spectrum and other notions of divergence and examples}
\label{conex}
In this section we review some concepts of divergence of two geodesic rays, divergence of bi-infinite geodesics, divergence of geodesic spaces. We show the connection between the divergence spectrum and these concepts. We also give some examples of divergence spectra of some spaces as well as finitely generated groups.

We first recall the concept of the divergence of a pair of geodesic rays, the divergence of a bi-infinite geodesic and state some clear connections between these concepts and lower divergence.
\begin{defn}
The \emph{divergence} of two geodesic rays $\alpha$ and $\beta$ with the same initial point $x_0$ in a geodesic space $X$, denoted $\Div_{\alpha,\beta}$, is a function $g: (0,\infty)\to(0,\infty]$ defined as follows. For each positive $r$, if there is no path outside the open ball with radius $r$ about $x_0$ connecting $\alpha(r)$ and $\beta(r)$, we define $g(r)=\infty$. Otherwise, we define $g(r)$ to be the infimum on the lengths of all paths outside the open ball with radius $r$ about $x_0$ connecting $\alpha(r)$ and $\beta(r)$.

The \emph{divergence of a bi-infinite geodesic} $\gamma$, denoted $\Div_{\gamma}$, is the divergence of the two geodesic rays obtained from $\gamma$ with the initial point $\gamma(0)$.
\end{defn}

\begin{rem}
Let $\alpha$ be a bi-infinite geodesic in a geodesic space $X$. It is not hard to see that $ldiv_{\alpha}\leq Div_{\alpha}$. Moreover, $ldiv_{\alpha}\sim Div_{\alpha}$ if $\alpha$ is a periodic geodesic (i.e. there is an isometry $g$ of $X$ such that $g\alpha=\alpha$). 
\end{rem}

We now recall Gersten's definition of divergence as a quasi-isometry invariant from \cite{MR1254309}. We first construct the notions of domination and equivalence which are used in Gersten's definition of divergence.

\begin{defn}Let $\mathcal{M}$ be the collection of all functions from $[0,\infty)$ to $[0,\infty]$. Let $\{\delta_{\rho}\}$ and $\{\delta'_{\rho}\}$ be two families of functions of $\mathcal{M}$, indexed over $\rho \in (0,1]$. \emph{The family $\{\delta_{\rho}\}$ is dominated by the family $\{\delta'_{\rho}\}$}, denoted \emph{$\{\delta_{\rho}\}\preceq \{\delta'_{\rho}\}$}, if there exists a constant $L\in (0,1]$ such that $\delta_{L\rho}\preceq \delta'_{\rho}$. Two families \emph{$\{\delta_{\rho}\}$ and $\{\delta'_{\rho}\}$} are \emph{equivalent}, denoted \emph{$\{\delta_{\rho}\}\sim \{\delta'_{\rho}\}$}, if $\{\delta_{\rho}\}\preceq \{\delta'_{\rho}\}$ and $\{\delta'_{\rho}\}\preceq \{\delta_{\rho}\}$. \emph{The family $\{\delta_{\rho}\}$ is strictly dominated by the family $\{\delta'_{\rho}\}$}, denoted \emph{$\{\delta_{\rho}\}\prec \{\delta'_{\rho}\}$}, if $\{\delta_{\rho}\}$ is dominated by $\{\delta'_{\rho}\}$ and they are not equivalent.
\end{defn}

\begin{rem}
The relations $\preceq$ and $\prec$ are transitive. The relation $\sim$ is an equivalence relation.

If $f$ is an element in $\mathcal{M}$, we could represent $f$ as a family $\{\delta_{\rho}\}$ for which $\delta_{\rho}=f$ for all $\rho$. Therefore, the family $\{\delta_{\rho}\}$ is dominated by (or dominates) a function $f$ in $\mathcal{M}$ if $\{\delta_{\rho}\}$ is dominated by (or dominates) the family $\{\delta'_{\rho}\}$ where $\delta'_{\rho}=f$ for all $\rho$. The equivalence between a family $\{\delta^n_{\rho}\}$ and a function $f$ in $\mathcal{M}$ can be defined similarly. Thus, a family $\{\delta_{\rho}\}$ is linear, quadratic, exponential, etc if $\{\delta_{\rho}\}$ is equivalent to the function $f$ where $f$ is linear, quadratic, exponential, etc.
\end{rem}
 Let $X$ be a geodesic space and $x_0$ one point in $X$. Let $d_{r,x_0}$ be the induced length metric on the complement of the open ball with radius $r$ about $x_0$. If the point $x_0$ is clear from context, we can use the notation $d_r$ instead of using $d_{r,x_0}$.

\begin{defn}
Let $X$ be a geodesic space and $x_0$ one point in $X$. For each $\rho \in (0,1]$, we define a function $\delta_{\rho}\!:[0, \infty)\to [0, \infty)$ as follows: 

For each $r$, let $\delta_{\rho}(r)=\sup d_{\rho r}(x_1,x_2)$ where the supremum is taken over all $x_1, x_2 \in S_r(x_0)$ such that $d_{\rho r}(x_1, x_2)<\infty$.

The family of functions $\{\delta_{\rho}\}$ is the \emph{divergence} of $X$ with respect to the point $x_0$, denoted $Div_{X,x_0}$.

In \cite{MR1254309}, Gersten show that the divergence $Div_{X,x_0}$ is, up to the relation ~$\sim$, a quasi-isometry invariant which is independent of the chosen basepoint. The \emph{divergence} of $X$, denoted $Div_X$, is then, up to the relation ~$\sim$, the divergence $Div_{X,x_0}$ for some point $x_0$ in $X$. 

The \emph{divergence} of a finitely generated group is the divergence of its Cayley graphs.
\end{defn}

\begin{rem}
\label{rma1}

Let $X$ be a one-ended geodesic space with the geodesic extension property. Let $\{\delta_{\rho}\}$ be the divergence of $X$ with respect to some basepoint $x_0$. It is not hard to see that $\delta_{\rho} \sim \delta_1$ for each $\rho \in (0,1]$. In this case, we think of $Div_{X,x_0}$ as the function of $\delta_1$. In particular, we can consider the divergence $Div_X$ of the space $X$, up to the relation ~$\sim$, as a function. Let $\alpha$ and $\beta$ be two rays with the same initial point. It is not hard to see that $\Div_{\alpha,\beta}\preceq Div_X$. In particular, the divergence $Div_X$ is a upper bound of the divergence spectrum of $X$.
\end{rem}

We give some examples of divergence spectra of some spaces as well as finitely generated groups.
\begin{exmp}
 The divergence spectrum of an abelian group is empty since there is no Morse bi-infinite geodesic in its Cayley graphs.

The divergence spectrum of a one-ended hyperbolic group only contains exponential functions since each bi-infinite geodesic in its Cayley graphs is Morse and has exponential lower divergence. 

Charney-Sultan \cite{MR3339446} prove that the lower divergence of a Morse geodesic of a $\CAT(0)$ space has at least quadratic lower divergence. Therefore, the divergence spectrum of a $\CAT(0)$ space is either empty or only contains functions that are at least quadratic.

By the work of Behrstock-Charney \cite{MR2874959}, the divergence spectrum of a one-ended right-angled Artin group is either empty or only contains quadratic functions. Moreover, the divergence spectrum of a one-ended right-angled Artin group is empty iff the group can be written as the direct product of two nontrivial subgroups. 

In \cite{MR2584607}, Dru{\c{t}}u-Mozes-Sapir prove that the existence of a Morse geodesic in a geodesic space $X$ implies the existence of cut points in all asymptotic cones of $X$. Therefore, if the space $X$ has non-empty divergence spectrum, then all asymptotic cones of $X$ contain cut points.

%The divergence spectrum of a one-ended geodesic space $X$ with the geodesic extension property only contains functions that are dominated by $Div_{X}$. Therefore, if $X$, $Y$ are two one-ended geodesic spaces with the geodesic extension property such that $Div_X \prec Div_Y$ and $Div_Y \in S_Y$, then $X$ and $Y$ have different divergence spectra.
In general, the divergence spectrum of a subgroup does not need to be contained in the divergence spectrum of the whole group. For example, let $H$ be a finitely generated group with non-empty divergence spectrum and $G=H \times K$, where $K$ be any finitely generated group. Then the divergence spectrum of $G$ is empty while the divergence spectrum of $H$ is not. 
\end{exmp}

The divergence spectrum of a one-ended geodesic space $X$ with the geodesic extension property only contains functions that are dominated by $Div_{X}$. Therefore, if $X$, $Y$ are two one-ended geodesic spaces with the geodesic extension property such that $Div_X \prec Div_Y$ and $Div_Y \in S_Y$, then $X$ and $Y$ have different divergence spectra.

\begin{ques}
Let $X$ be a one-ended geodesic space with the geodesic extension property. Does $Div_X$ belong to the divergence spectrum of $X$?
\end{ques}

\section{Divergence spectra of relatively hyperbolic groups}

In this section, we investigate divergence spectra of finitely generated relatively hyperbolic groups.
\label{sporh}

We first recall the concepts of coned off Cayley graphs and relatively hyperbolic groups.
\begin{defn}
Given a finitely generated group $G$ with Cayley graph $\Gamma(G,S)$ equipped with the path metric and a finite collection $\PP$ of subgroups of G, one can construct the \emph{coned off Cayley graph} $\hat{\Gamma}(G,S,\PP)$ as follows: For each left coset $gP$ where $P\in \PP$, add a vertex $v_{gP}$, called a \emph{peripheral vertex}, to the Cayley graph $\Gamma(G,S)$ and for each element $x$ of $gP$, add an edge $e(x,gP)$ of length 1/2 from $x$ to the vertex $v_{gP}$. This results in a metric space that may not be proper (i.e. closed balls need not be compact).
\end{defn}

%\begin{rem}
%Throughout this section, we denote the metric in $\Gamma(G,S)$ by $d_S$ and the metric in $\hat{\Gamma}(G,S,\PP)$ by $d$.
%\end{rem}

\begin{defn} [Relatively hyperbolic group]
\label{rel}
A finitely generated group $G$ is \emph{hyperbolic relative to a finite collection $\PP$ of subgroups of $G$} if the coned off Cayley graph is $\delta$--hyperbolic and \emph{fine} (i.e. for each positive number $n$, each edge of the coned off Cayley graph is contained in only finitely many circuits of length $n$).

Each group $P\in \PP$ is a \emph{peripheral subgroup} and its left cosets are \emph{peripheral left cosets} and we denote the collection of all peripheral left cosets by $\Pi$.

An element $g$ of $G$ is \emph{hyperbolic} if $g$ is not conjugate to any element of any peripheral subgroups. 
\end{defn}

We now come up with some concepts and lemmas that help us establish some properties of divergence spectra of relatively hyperbolic groups.
\begin{lem}\cite[Lemma 4.15]{MR2153979}
\label{lemma2}
\label{thm:PeripheralQuasiconvex}
Let $(G,\PP)$ be a relatively hyperbolic group with a finite generating set $S$.
For each $K\geq 1$, $L>0$ and $A_0>0$ there is a constant $A_1=A_1(K,L,A_0)$ such that the following
holds in $\Cayley(G,\Set{S})$.
Let $c$ be a $(K,L)$--quasi-geodesic segment whose endpoints lie in the $A_0$--neighborhood
of a peripheral left coset $gP$.
Then $c$ lies in the $A_1$--neighborhood of $gP$. Thus, if $P$ is a peripheral subgroup of $G$ such that the set $T=S\cap P$ generates $P$, then the Cayley Graph $\Gamma(P,T)$ is a subgraph of $\Gamma(G,S)$ and the embedding map is a quasi-isometric embedding.
\end{lem}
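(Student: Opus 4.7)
The plan is to treat the two assertions separately: the first (quasiconvexity of peripheral cosets in the Cayley graph) is the substantive content cited from Dru\c{t}u--Sapir, while the second (the quasi-isometric embedding) is a short consequence of the first together with local finiteness. I will sketch the route to the first and then describe the derivation of the second.

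For the first assertion, I would pass to the coned-off Cayley graph $\hat\Gamma(G,S,\PP)$, which by Definition~\ref{rel} is $\delta$--hyperbolic and fine. A $(K,L)$--quasi-geodesic $c$ in $\Gamma(G,S)$ descends to a quasi-geodesic in $\hat\Gamma$ whose quasi-constants depend only on $K$, $L$ and $A_0$; its endpoints sit within bounded $\hat\Gamma$--distance of the peripheral vertex $v_{gP}$. The Morse/stability lemma for the hyperbolic space $\hat\Gamma$ then pushes $c$ into a bounded $\hat\Gamma$--neighborhood of a $\hat\Gamma$--geodesic through $v_{gP}$, so every vertex of $c$ is $\hat\Gamma$--close to $v_{gP}$. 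Fineness of $\hat\Gamma$ converts this $\hat\Gamma$--proximity (which could in principle be realised by long loops in the original Cayley graph) into a bound in $\Gamma(G,S)$: since only finitely many circuits of any fixed length can contain $v_{gP}$, a vertex lying uniformly $\hat\Gamma$--close to $v_{gP}$ must in fact be within a bounded $\Gamma(G,S)$--distance of $gP$, producing $A_1$.

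For the second assertion, one inequality is automatic: every edge of $\Gamma(P,T)$ is already an edge of $\Gamma(G,S)$, so $d_G(p_1,p_2) \le d_P(p_1,p_2)$ for $p_1,p_2 \in P$. For the reverse coarse inequality, take a $\Gamma(G,S)$--geodesic $c$ from $p_1$ to $p_2$; its endpoints lie on $P = eP$, hence in the $0$--neighborhood of $P$, so the first assertion (with $K=1$, $L=0$, $A_0=0$) yields a constant $A_1$ with $c \subset N_{A_1}(P)$. Enumerate the consecutive vertices $x_0 = p_1, x_1, \dots, x_n = p_2$ of $c$ and pick $q_i \in P$ with $d_G(x_i,q_i) \le A_1$, choosing $q_0 = p_1$ and $q_n = p_2$. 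Then $d_G(q_i,q_{i+1}) \le 2A_1 + 1$, so each $q_i^{-1}q_{i+1}$ belongs to the finite set $P \cap B_S(e, 2A_1 + 1)$ (finite because $\Gamma(G,S)$ is locally finite). Letting $K'$ be the maximum $T$--length over this finite set gives $d_P(p_1,p_2) \le \sum_i d_P(q_i,q_{i+1}) \le K'n = K' \cdot d_G(p_1,p_2)$, completing the quasi-isometric embedding.

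The principal obstacle lies in the first assertion: converting the abstract hyperbolicity/fineness of $\hat\Gamma$ into a concrete neighborhood bound in $\Gamma(G,S)$ requires the full Dru\c{t}u--Sapir machinery, which is why the statement is quoted rather than reproved. Once that is in hand, the quasi-isometric embedding reduces to the short local-finiteness computation above.
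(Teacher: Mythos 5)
The paper offers no proof of this lemma at all: the first assertion is quoted verbatim from Dru\c{t}u--Sapir, and the ``Thus\ldots'' clause about the quasi-isometric embedding is stated without justification. Your overall structure therefore matches the paper's (cite the quasiconvexity, derive the embedding), and your derivation of the second assertion is correct and is the standard one: $d_S\leq d_T$ on $P$ because $\Gamma(P,T)$ is a subgraph, and the reverse coarse inequality follows by pushing a $\Gamma(G,S)$--geodesic between points of $P$ into $N_{A_1}(P)$, discretizing, and using that $P\cap B_S(e,2A_1+1)$ is finite with uniformly bounded $T$--length. This fills in a step the paper leaves implicit.

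However, the sketched route to the first assertion would not work if carried out. The claim that a $(K,L)$--quasi-geodesic of $\Gamma(G,S)$ ``descends to a quasi-geodesic in $\hat\Gamma$ whose quasi-constants depend only on $K$, $L$ and $A_0$'' is false: a $\Gamma(G,S)$--geodesic segment of length $n$ joining two points of a single peripheral left coset $gP$ still has length about $n$ as a path in $\hat\Gamma$, while its endpoints are at $\hat\Gamma$--distance at most $1$ (via the cone vertex $v_{gP}$), so for large $n$ it is not a quasi-geodesic in $\hat\Gamma$ with any fixed constants. Indeed, if quasi-geodesics of $\Gamma(G,S)$ projected to uniform quasi-geodesics of $\hat\Gamma$, the peripheral subgroups would be undistorted in a way that makes the lemma nearly circular. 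The subsequent step is also shaky: bounded $\hat\Gamma$--distance to $v_{gP}$ does not translate, via fineness alone, into bounded $d_S$--distance to $gP$, since a vertex may be $\hat\Gamma$--close to $v_{gP}$ through shortcuts across other peripheral cosets. The actual Dru\c{t}u--Sapir proof proceeds through asymptotic cones and tree-graded spaces (alternatively one can use Osin's or Hruska's relative-geodesic machinery), not through the stability argument you describe. Since you explicitly defer to the citation for this part, your proposal is not wrong as a whole, but the heuristic you give should not be mistaken for an outline of a proof.
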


\begin{defn}
Let $c$ be a geodesic of $\Gamma(G,S)$,
and let $\epsilon,R$ be positive constants.
A point $x \in c$ is \emph{$(\epsilon,R)$--deep}
in a peripheral left coset $gP$ (with respect to $c$)
if $x$ is not within a distance $R$ of an endpoint of $c$ and
$\ball{x}{R} \cap c$ lies in $\nbd{gP}{\epsilon}$. A point $x \in c$ is \emph{$(\epsilon,R)$--deep} if $x$ is $(\epsilon,R)$--deep in some peripheral left coset $gP$. If $x$ is not $(\epsilon,R)$--deep in any peripheral left coset $gP$
then $x$ is an \emph{$(\epsilon,R)$--transition point} of $c$.
\end{defn}

\begin{lem} \cite[Lemma 8.10]{Hruska10} 
\label{lemma5a}
Let $(G,\PP)$
be relatively hyperbolic with a finite generating set $S$.
For each $\epsilon$ there is a constant $R=R(\epsilon)$
such that the following holds.
Let $c$ be any geodesic of $\Gamma(G,S)$,
and let $\bar{c}$ be a connected component of the set of all
$(\epsilon,R)$--deep points of $c$.
Then there is a peripheral left coset $gP$ such that each $x \in \bar{c}$
is $(\epsilon,R)$--deep in $gP$ and is not $(\epsilon,R)$--deep in any other
peripheral left coset.
\end{lem}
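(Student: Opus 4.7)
The plan is to reduce the statement to the standard bounded coarse intersection property for peripheral left cosets in a relatively hyperbolic group, and then argue by a connectedness/locally-constant map argument on $\bar{c}$.

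First I would establish (or recall from the fineness and hyperbolicity of the coned-off Cayley graph $\hat{\Gamma}(G,S,\PP)$) the following input: for every $\epsilon>0$ there is a constant $D=D(\epsilon)$ such that for any two distinct peripheral left cosets $gP\neq g'P'$,
\[
\Diam\bigl(\nbd{gP}{\epsilon}\cap\nbd{g'P'}{\epsilon}\bigr)\leq D.
\]
This is the standard bounded coarse intersection lemma; it follows because a long sequence of group elements lying $\epsilon$-close to two distinct peripheral vertices in $\hat{\Gamma}(G,S,\PP)$ forces arbitrarily many short circuits through a single edge, violating fineness.

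With this in hand, I would choose $R=R(\epsilon)$ strictly larger than $D(\epsilon)+\epsilon+1$ and verify the two pieces of the conclusion. For the fact that no point $x\in\bar{c}$ can be $(\epsilon,R)$-deep in two distinct cosets $gP\neq g'P'$: by definition $\ball{x}{R}\cap c$ is a subsegment of $c$ of diameter essentially $2R$ (since $x$ is at least $R$ away from both endpoints of $c$), and this subsegment would lie inside $\nbd{gP}{\epsilon}\cap\nbd{g'P'}{\epsilon}$, giving a diameter $\geq 2R>D(\epsilon)$, a contradiction. Hence each $x\in\bar c$ determines a unique peripheral left coset $\phi(x)$ in which it is deep.

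Next I would show that the map $\phi$ is locally constant on $\bar{c}$, forcing it to be constant on the connected set $\bar{c}$. Given $x,x'\in\bar{c}$ with $d(x,x')$ small (say less than $R/2$), the subsegments $\ball{x}{R}\cap c$ and $\ball{x'}{R}\cap c$ overlap in a subarc of $c$ of diameter at least $R$, and this overlap lies in $\nbd{\phi(x)}{\epsilon}\cap\nbd{\phi(x')}{\epsilon}$; since $R>D(\epsilon)$, we conclude $\phi(x)=\phi(x')$. Thus $\phi$ is constant on $\bar{c}$, yielding a single peripheral left coset $gP$ in which every point of $\bar{c}$ is deep and, by the previous paragraph, the only such coset.

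The main obstacle, if one were not simply citing Hruska, is the bounded coarse intersection input: translating fineness plus hyperbolicity of $\hat\Gamma(G,S,\PP)$ into a \emph{uniform} diameter bound for $\nbd{gP}{\epsilon}\cap\nbd{g'P'}{\epsilon}$ requires a careful pigeonhole on short circuits through the peripheral vertices $v_{gP}$ and $v_{g'P'}$. Once that combinatorial bound is in place, the remainder of the proof is a short metric argument in $\Gamma(G,S)$ using nothing more than the definition of $(\epsilon,R)$-deep points and connectedness of $\bar{c}$.
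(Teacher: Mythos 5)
The paper itself offers no proof of this statement: it is imported verbatim from Hruska \cite[Lemma 8.10]{Hruska10}, so there is no internal argument to compare against. Your proposal is a correct reconstruction of the standard proof, and its two-step shape (reduce to bounded coarse intersection of peripheral cosets, then run a locally-constant-on-a-connected-set argument) is exactly how this lemma is proved in the literature. The quantitative details check out: since $x$ lies on the geodesic $c$ and is at distance at least $R$ from its endpoints, $\ball{x}{R}\cap c$ is precisely the open subsegment of $c$ of radius $R$ about $x$, so it has diameter $2R$; if $x$ were $(\epsilon,R)$--deep in two distinct cosets this subsegment would sit inside $\nbd{gP}{\epsilon}\cap\nbd{g'P'}{\epsilon}$, whose diameter is at most $D(\epsilon)<2R$, and two deep points at distance less than $R/2$ give an overlap of diameter more than $3R/2>D(\epsilon)$, forcing their cosets to coincide; constancy on the connected component then follows. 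The only substantive input you do not fully prove is the bounded coarse intersection lemma itself, which you correctly flag as the real content. Your one-line derivation from fineness needs two refinements to be complete: the short closed walks through $v_{gP}$ and $v_{g'P'}$ built from far-apart points of the intersection must be replaced by embedded circuits through a fixed edge $e(p_a,gP)$ (possible because that edge is traversed once), and the uniformity of $D(\epsilon)$ over all pairs of distinct cosets requires using that $G$ acts on $\hat{\Gamma}(G,S,\PP)$ with finitely many orbits of edges. Since this property is itself standard (it is property $(\alpha_1)$ of Dru\c{t}u--Sapir and appears in Osin's and Hruska's treatments), citing it rather than reproving it would be entirely acceptable, and with that citation your argument is complete.
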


\begin{lem} \cite[Proposition 8.13]{Hruska10}
\label{lemma5b}
Let $(G,\PP)$ be relatively hyperbolic with a finite generating set $S$.
There exist constants $\epsilon$, $R$ and $L$ such that the following holds.
Let $c$ be any geodesic of $\Gamma(G,S)$ with endpoints in $G$,
and let $\hat{c}$ be a geodesic of $\hat{\Gamma}(G,S,\PP)$
with the same endpoints as $c$.
Then in the metric $d_{S}$, the set of $G$--vertices of $\hat{c}$
is at a Hausdorff distance at most $L$ from the set of
$(\epsilon,R)$--transition points of $c$.
Furthermore, the constants $\epsilon$ and $R$ satisfy the conclusion
of Lemma~ \ref{lemma5a}.
\end{lem}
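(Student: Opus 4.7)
The plan is to prove the two inclusions of the Hausdorff estimate separately, using that $\hat{\Gamma}(G,S,\PP)$ is $\delta$--hyperbolic and fine, together with the quasi-convexity of peripheral cosets in $\Gamma(G,S)$ (Lemma~\ref{thm:PeripheralQuasiconvex}). First I would fix $\epsilon$ large enough to exceed both the $\delta$ of $\hat{\Gamma}$ and the quasi-convexity constants appearing in Lemma~\ref{thm:PeripheralQuasiconvex}, then take $R=R(\epsilon)$ from Lemma~\ref{lemma5a} so that every connected component of the set of $(\epsilon,R)$--deep points of any geodesic is deep in a single peripheral left coset.

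For the inclusion ``each $G$--vertex of $\hat{c}$ lies within $L$ of an $(\epsilon,R)$--transition point of $c$'': let $g$ be such a $G$--vertex. The sub-paths of $\hat{c}$ on the two sides of $g$, together with $c$, form a geodesic triangle in $\hat{\Gamma}$. Hyperbolicity of $\hat{\Gamma}$ combined with Lemma~\ref{thm:PeripheralQuasiconvex} (applied to produce a genuine point of $c$ near any point close to $c$ in the coned-off metric) yields a point $x\in c$ whose $d_S$--distance from $g$ is bounded by a constant depending only on $\delta$ and the quasi-convexity constants. If $x$ were $(\epsilon,R)$--deep in some $hP$, then by Lemma~\ref{lemma5a} the whole component of deep points through $x$ lies in $\nbd{hP}{\epsilon}$; but then $\hat{c}$ could be strictly shortened by replacing the corresponding sub-arc by the two half-edges through the peripheral vertex $v_{hP}$, contradicting that $g$ lies on the geodesic $\hat{c}$.

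For the opposite inclusion, let $x$ be an $(\epsilon,R)$--transition point of $c$. The same triangle argument produces a point $y$ on $\hat{c}$ whose $d_S$--distance from $x$ is uniformly bounded. If $y$ is already a $G$--vertex we are done, so assume $y$ lies on an edge of $\hat{c}$ incident to a peripheral vertex $v_{gP}$. The two $G$--vertices on such an edge both belong to $gP$, so $x$ is within a uniform $d_S$--distance of $gP$. Applying Lemma~\ref{thm:PeripheralQuasiconvex} to the sub-arc of $c$ whose endpoints lie near $gP$ then forces an $R$--neighborhood of $x$ in $c$ to lie inside $\nbd{gP}{\epsilon}$, contradicting $x$ being a transition point unless the adjacent $G$--vertex of $\hat{c}$ is itself within the desired Hausdorff bound of $x$.

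The main obstacle is extracting uniform constants independent of $c$. Fineness of $\hat{\Gamma}$ is essential here: without it, an arbitrarily long portion of $\hat{c}$ could linger near a single peripheral vertex, and one could not translate ``close to $v_{gP}$ in $\hat{\Gamma}$'' into a bounded $d_S$--distance from $gP$. Combining fineness with a simultaneous application of Lemma~\ref{thm:PeripheralQuasiconvex} to $c$ and to any $(1,0)$--quasi-geodesic in $\hat{\Gamma}$ (viewed as a quasi-geodesic in $\Gamma(G,S)$ after replacing peripheral edges by segments through the coset), one can assemble everything into a single constant $L=L(G,\PP,S)$.
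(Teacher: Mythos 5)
The paper does not actually prove this lemma: it is quoted verbatim from Hruska \cite[Proposition 8.13]{Hruska10}, so there is no internal proof to compare against. Judged on its own, your sketch has the right architecture (two inclusions, hyperbolicity of $\hat{\Gamma}$, peripheral quasiconvexity, some appeal to fineness), but the step you treat as routine is exactly the hard content of the proposition. In both inclusions you assert that a thin-triangle (really thin-bigon) argument in $\hat{\Gamma}$, combined with Lemma~\ref{thm:PeripheralQuasiconvex}, produces a point of the other path at \emph{uniformly bounded $d_S$-distance}. Hyperbolicity of $\hat{\Gamma}$ controls only the coned-off metric, and a ball of radius $1$ about a peripheral vertex contains an entire coset, which has infinite $d_S$-diameter; Lemma~\ref{thm:PeripheralQuasiconvex} does not bridge the passage from closeness in $\hat{\Gamma}$ to closeness in $d_S$. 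The tool that actually does this is the bounded coset penetration property (equivalently, the results of Dru\c{t}u--Sapir and Osin that Hruska packages before his Proposition 8.13): a geodesic of $\Gamma(G,S)$ and a geodesic of $\hat{\Gamma}(G,S,\PP)$ with the same endpoints enter and exit each peripheral left coset at uniformly $d_S$-close points. Fineness enters the theory at that earlier stage; invoking it by name at the end does not supply the uniform constant. Note also that the paper's Lemma~\ref{mainlemma} only gives a bound of the form $K\log_2\abs{p}$, depending on the length of the competing path, so it cannot by itself produce the uniform $L$. For the second inclusion, your opening sentence (``the same triangle argument produces a point $y$ on $\hat{c}$ whose $d_S$-distance from $x$ is uniformly bounded'') is essentially the statement to be proved.

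The secondary arguments also need repair. In the first inclusion, from ``$x$ is $(\epsilon,R)$-deep in $hP$ and $d_S(g,x)$ is bounded'' you conclude that $\hat{c}$ could be shortened through $v_{hP}$; but shortening requires two vertices of $\hat{c}$ lying in $hP$ whose separation along $\hat{c}$ exceeds the length of the detour through $v_{hP}$, and the deepness of the single point $x$ of $c$ hands you only the one nearby vertex $g$ of $\hat{c}$. One must first show that the entire deep component of $c$ containing $x$ is shadowed by a subsegment of $\hat{c}$ whose $G$-vertices lie in a bounded neighborhood of $hP$ --- again a BCP-type statement. If you want a self-contained argument rather than the citation, the cleanest route is to first establish (or quote) that the entry and exit points of $c$ into $\nbd{gP}{\epsilon}$, for each coset penetrated by $\hat{c}$, are uniformly $d_S$-close to the corresponding $G$-vertices of $\hat{c}$; both inclusions then follow quickly from Lemma~\ref{lemma5a}.
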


\begin{lem}\cite[Lemma A.3]{MR2153979}
\label{mainlemma}
Let $(G,\PP)$ be a relatively hyperbolic group with a finite generating set $S$. Then there is a constant $K>1$ such that the following holds. Let $p$ and $q$ be paths in 
$\hat{\Gamma}(G,S,\PP)$ such that $p^{-} =q^{-}$, $p^{+} =q^{+}$, and $q$ is geodesic in $\hat{\Gamma}(G,S,\PP)$. Then for any vertex $v \in q$, there exists a vertex $w \in p$ such that $d_S(w,v)\leq K \log_2 \abs{p}$.
\end{lem}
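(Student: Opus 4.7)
The proof follows the standard strategy for logarithmic divergence in fine hyperbolic graphs, which is the content of Osin's \cite{MR2153979} Appendix~A. My plan is to first establish a coarser version of the statement in the metric $\hat{d}$ of $\hat{\Gamma}(G,S,\PP)$, then use fineness together with Lemma~\ref{thm:PeripheralQuasiconvex} to upgrade it to a bound in $d_S$.

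For the $\hat{d}$-version, I would carry out a dyadic subdivision argument. Let $\delta$ be the hyperbolicity constant of $\hat{\Gamma}(G,S,\PP)$. Split $p$ at a midpoint $u$ into two subpaths $p_1, p_2$ of length at most $\lceil |p|/2 \rceil$. Let $q_1, q_2$ be $\hat{d}$-geodesics from $p^{-}$ to $u$ and from $u$ to $p^{+}$ respectively. The concatenation $q_1 \cup q_2$ together with the geodesic $q$ forms a geodesic triangle in $\hat{\Gamma}(G,S,\PP)$, so by $\delta$-thinness every vertex of $q$ lies within $\hat{d}$-distance $\delta$ of $q_1 \cup q_2$. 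Thus $v$ is within $\delta$ of some vertex on $q_i$, and we may recurse on the shorter bigon formed by $q_i$ and $p_i$. After at most $\log_2 |p|$ levels of recursion, the remaining bigon has length bounded by a constant, and we obtain a vertex $w \in p$ with $\hat{d}(v,w) \leq \delta \log_2 |p| + C_1$.

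To upgrade this to a bound in $d_S$, I would analyze an $\hat{d}$-geodesic $\sigma$ from $v$ to $w$ of length $n \leq \delta\log_2 |p| + C_1$. This geodesic alternates between $S$-edges and cone-edges through peripheral vertices $v_{gP}$. Each maximal cone-passage through $v_{gP}$ corresponds to a pair $(x,y) \in gP \times gP$ whose $d_S$-distance must be controlled. Here fineness enters: the portion of $p$ between the corresponding entry and exit points forms, together with the two cone-edges, a circuit based at $v_{gP}$ of length bounded in terms of $|p|$; applying fineness inside circuits of controlled length, together with Lemma~\ref{thm:PeripheralQuasiconvex} to ensure the corresponding subpath of $p$ stays in a bounded neighborhood of $gP$, bounds the $d_S$-distance across each cone-passage by a constant depending only on $(G,\PP,S)$ times the length of the relevant portion of $p$.

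The main obstacle is combining the recursive hyperbolicity argument with the fineness upgrade without accumulating more than logarithmic cost. The cleanest way, and the one I expect Osin uses, is to track both the $\hat{d}$-bound and the $d_S$-bound jointly through the recursion: at each subdivision step the additional $d_S$-cost incurred by moving $v$ from $q$ to $q_i$ is bounded by a constant depending only on $\delta$ and the fineness data, not on $|p|$. Summing this constant over the $\log_2 |p|$ levels of recursion yields the desired bound $d_S(w,v)\leq K\log_2|p|$ with $K$ depending only on the hyperbolicity constant $\delta$, the fineness data of $\hat{\Gamma}(G,S,\PP)$, and the quasiconvexity constants from Lemma~\ref{thm:PeripheralQuasiconvex}.
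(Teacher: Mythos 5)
First, note that the paper contains no proof of this statement: it is quoted verbatim, with citation, as Lemma~A.3 of the appendix to \cite{MR2153979}, so the only question is whether your sketch would stand as an independent proof. Your first step (dyadic subdivision of $p$, $\delta$-thinness of the triangle $q\cup q_1\cup q_2$, recursion on the half-bigons) is the standard argument and correctly produces a vertex $w\in p$ with $\hat d(v,w)\leq \delta\log_2\abs{p}+O(1)$ in the metric $\hat d$ of $\hat\Gamma(G,S,\PP)$. The gap is entirely in the upgrade to $d_S$, which is the whole content of the lemma: since $\hat d(1,h)=1$ for every $h$ in a peripheral subgroup, a connecting path of $\hat d$-length $O(\log\abs{p})$ gives no control whatsoever on $d_S(v,w)$. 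Your middle paragraph does not repair this. The cone-passages of the geodesic $\sigma$ from $v$ to $w$ have no ``corresponding entry and exit points'' on $p$ (the coset penetrations of $\sigma$ need not be shadowed by $p$ at all); the circuits you invoke have length ``bounded in terms of $\abs{p}$'', and fineness only asserts finiteness of circuits of each fixed length, with no control on how the resulting $d_S$-bound grows with that length, so at best you would obtain $d_S(v,w)\leq\phi(\abs{p})$ for an uncontrolled function $\phi$; and Lemma~\ref{thm:PeripheralQuasiconvex} applies to quasi-geodesics, whereas subpaths of the arbitrary path $p$ are not quasi-geodesics, so it cannot be used to keep them near $gP$.

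Your final paragraph correctly isolates the estimate that actually makes the proof work: at each subdivision level, every $G$-vertex of $q$ should lie within a \emph{uniformly} bounded $d_S$-distance (independent of $\abs{p}$) of a $G$-vertex of $q_1\cup q_2$, and summing this constant over the $\log_2\abs{p}$ levels gives the lemma. But this per-level estimate is precisely the hard point; you assert it rather than prove it, and it does not follow from $\delta$-thinness plus fineness in the way you indicate, again because a $\hat d$-geodesic of length at most $\delta$ can join $G$-vertices that are arbitrarily far apart in $d_S$. Establishing it requires the bounded-coset-penetration and isolated-component machinery for geodesic polygons in $\hat\Gamma$ (in Osin's terminology, that isolated components of geodesic $n$-gons have $d_S$-length bounded in terms of $n$ alone, and that non-isolated components are synchronized across sides), none of which appears in your sketch. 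As written, the proposal reduces the lemma to an unproved claim that is essentially equivalent to it.
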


\begin{rem}
Let $G$ be a finitely generated group with a finite generating set $S$ such that $\Gamma(G,S)$ is one-ended with the geodesic extension property. By Remark \ref{rma1}, we can consider $Div_{\Gamma(G,S),e}$ as a function and it is not hard to see that $Div_{\Gamma(G,S),g}=Div_{\Gamma(G,S),e}$ for any $G$--vertex $g$.
\end{rem}

The following theorem shows that there is a ``gap'' in the divergence spectrum of a relatively hyperbolic group in terms of the divergence of the whole group and the divergence of its peripheral subgroups 
\begin{thm}
\label{tm2}
Let $(G,\PP)$ be a finitely generated relatively hyperbolic group. Suppose that there is a finite generating set $S$ such that for each subgroup $P$ in $\PP$ the set $S\cap P$ generates $P$ and the Cayley graph $\Gamma(P,S\cap P)$ of group $P$ with respect to the generating set $S\cap P$ is one-ended with the geodesic extension property. Let $f=\max\{Div_{\Gamma(P,S\cap P),e}\mid P\in\PP\}$. Then, the divergence spectrum of $G$ only contains functions that are at least exponential or dominated by $f$.
\end{thm}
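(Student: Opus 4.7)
The plan is to study how a Morse bi-infinite geodesic $\gamma$ in $\Gamma(G,S)$ interacts with the peripheral cosets and to split into two cases based on the structure of transition and deep points supplied by Lemma \ref{lemma5a} and Lemma \ref{lemma5b}. Fix constants $\epsilon$, $R$, $L$ as supplied by Lemma \ref{lemma5b}, a constant $K$ as in Lemma \ref{mainlemma}, and a quasi-isometric embedding constant $C$ for each inclusion $\Gamma(P, S \cap P) \hookrightarrow \Gamma(G,S)$ given by Lemma \ref{lemma2}. Call $\gamma(t)$ \emph{$(\epsilon,R)$--deep} if $\ball{\gamma(t)}{R} \cap \gamma \subset \nbd{gP}{\epsilon}$ for some peripheral coset $gP$, and \emph{$(\epsilon,R)$--transition} otherwise. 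The dichotomy is either (A) a uniform bound $N$ exists on the lengths of all maximal deep subintervals of $\gamma$, or (B) $\gamma$ contains maximal deep subintervals of unbounded length.

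In case (A) I will show $ldiv_\gamma$ is at least exponential, via the standard logarithmic-contraction argument in the coned off graph. Given $r \gg N+R$ and $t \in \RR$, pick a transition point $\gamma(t')$ with $|t - t'| \leq N$. For any path $\sigma$ from $\gamma(t-r)$ to $\gamma(t+r)$ avoiding $\ball{\gamma(t)}{r}$, extend $\sigma$ by short segments of $\gamma$ to a path $\sigma'$ from $\gamma(t'-r')$ to $\gamma(t'+r')$, where $r' = r - N$, such that $\sigma'$ still avoids $\ball{\gamma(t')}{r'}$. Geodesicize $\sigma'$ in $\hat\Gamma(G,S,\PP)$ to obtain a $\hat\Gamma$--geodesic $\hat c$ with the same endpoints. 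By Lemma \ref{lemma5b}, the transition point $\gamma(t')$ of the finite geodesic $\gamma|_{[t'-r',\,t'+r']}$ lies within $d_S$--distance $L$ of a $G$--vertex of $\hat c$; by Lemma \ref{mainlemma} that vertex lies within $K \log_2 |\sigma'|_{\hat\Gamma}$ of a vertex of $\sigma'$. Since $\sigma'$ avoids $\ball{\gamma(t')}{r'}$ and $|\sigma'|_{\hat\Gamma} \leq |\sigma'|_S$, these combine to give $|\sigma'| \geq 2^{(r - N - L)/K}$, and hence $|\sigma| \geq 2^{(r - N - L)/K} - 2N$. Taking the infimum over $t$ shows $ldiv_\gamma$ dominates an exponential.

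In case (B) I will show $ldiv_\gamma \preceq f$ by producing short detour paths around balls centered at midpoints of long deep subintervals. Pick maximal deep subintervals $[u_n, v_n]$ of $\gamma$ with $v_n - u_n \to \infty$; a bi-infinite adaptation of Lemma \ref{lemma5a} places each $[u_n, v_n]$ in $\nbd{g_n P_n}{\epsilon}$ for a unique peripheral coset $g_n P_n$. Let $m_n = (u_n+v_n)/2$. Given $r$, choose $n$ so that $v_n - u_n > 2(Cr + L + 2\epsilon + 1)$, and select $p_-$, $p_0$, $p_+ \in g_n P_n$ within $\epsilon$ of $\gamma(m_n - r)$, $\gamma(m_n)$, $\gamma(m_n + r)$ respectively. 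Since $\Gamma(P_n, S \cap P_n)$ is one-ended with the geodesic extension property, Remark \ref{rma1} lets me treat $Div_{\Gamma(P_n, S \cap P_n)} \leq f$ as a single function; this divergence supplies a path in $\Gamma(P_n, S \cap P_n)$ from $p_-$ to $p_+$ of length dominated by $f(Cr + L)$ that avoids the $d_P$--ball of radius $Cr+L$ about $p_0$. Because the inclusion has quasi-isometric embedding constant $C$, any point at $d_P$--distance greater than $Cr + L$ from $p_0$ has $d_S$--distance greater than $r$. Concatenating with $d_S$--segments of length at most $\epsilon$ from $\gamma(m_n \pm r)$ to $p_\pm$ gives a path in $\Gamma(G,S)$ from $\gamma(m_n-r)$ to $\gamma(m_n+r)$ avoiding $\ball{\gamma(m_n)}{r}$ of length at most $f(Cr+L) + O(1)$. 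Consequently $\rho_\gamma(r, m_n) \preceq f$, hence $ldiv_\gamma \preceq f$.

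The main obstacle is case (A): the midpoint $\gamma(t)$ itself may be deep, so the log-contraction argument must be executed at a shifted nearby transition point $\gamma(t')$, and one has to track the constant shifts (by $N$, by $L$, and by the extension error) carefully so that the exponential in $r$ survives in the final bound on $|\sigma|$. A secondary technicality in case (B) is that the peripheral divergence is measured in the $d_P$ metric while the ball to avoid is defined in $d_S$; translating between them via the quasi-isometric embedding constant $C$, and patching with short connecting segments to reach the actual points $\gamma(m_n \pm r)$ on $\gamma$ rather than their nearby representatives in $g_n P_n$, completes the argument.
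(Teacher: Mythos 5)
Your overall strategy --- the dichotomy between geodesics whose $(\epsilon,R)$--deep components have uniformly bounded length and those that do not, with the log-contraction argument in $\hat\Gamma(G,S,\PP)$ for the first case and a detour through the peripheral coset for the second --- is exactly the paper's. Your case (A) is essentially the paper's argument and is correct: shifting to a nearby transition point, applying Lemma \ref{lemma5b} to place it near a $G$--vertex of a coned-off geodesic, and applying Lemma \ref{mainlemma} to force $|\sigma'|\geq 2^{(r-N-L)/K}$ all go through (only trivial constant bookkeeping, e.g.\ the extension costs up to $4N$ rather than $2N$, needs adjusting).

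Case (B), however, has a genuine gap at the central step. You select $p_\pm\in g_nP_n$ within $\epsilon$ of $\gamma(m_n\pm r)$ and claim the divergence of $\Gamma(P_n,S\cap P_n)$ supplies a path from $p_-$ to $p_+$ avoiding the $d_P$--ball of radius $Cr+L$ about $p_0$. But the only lower bound available is $d_P(p_-,p_0)\geq d_S(p_-,p_0)\geq r-2\epsilon$, while the upper bound is roughly $Cr$; when $C>1$ the point $p_-$ may lie well inside the ball $B_{d_P}(p_0,Cr+L)$, in which case no path from $p_-$ to $p_+$ avoiding that ball exists and the divergence function cannot be invoked. The two requirements --- the $d_P$--ball must be large enough (radius about $Cr$) that avoiding it forces $d_S$--distance at least $r$ from $\gamma(m_n)$, yet the anchor points must lie outside it --- are incompatible when the anchors sit at $d_S$--distance exactly $r$, precisely because of the distortion factor $C$. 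The paper resolves this by anchoring the detour at points $x_1,x_2$ on the deep component at $d_S$--distance $Mr$ apart for a large constant $M$ (there $M=32L^2$), so that their peripheral representatives are guaranteed to lie at $d_P$--distance at least the radius of the ball being avoided, and then returning to the points $\gamma(m_n\pm r)$ along subsegments of $\gamma$ itself (which automatically stay outside $B(\gamma(m_n),r)$). A secondary omission in the same step: $Div_{\Gamma(P_n,S\cap P_n),e}$ as defined only bounds $d_{\rho r}(x_1,x_2)$ for points $x_1,x_2$ \emph{on the sphere} of radius $r$ about the basepoint, so even with correctly placed anchors you must first push $p_\pm$ radially onto the relevant sphere before applying $f$; the paper does this explicitly with the auxiliary points $y_3,y_4$. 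Both issues are repairable (alternatively, one can shrink the $d_P$--radius to $r-2\epsilon$ and accept a rescaled conclusion $\rho_\gamma(r'',m_n)\preceq f$ for $r''\sim r/C$, which still yields $ldiv_\gamma\preceq f$), but as written the detour path you describe need not exist.
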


\begin{proof}
Let $\epsilon$ and $R$ be the constants in Lemma \ref{lemma5b}. Let $\alpha$ be an arbitrary bi-infinite geodesic in $\Gamma(G,S)$. We will show that the lower divergence of $\alpha$ is at least exponential if the length of $(\epsilon,R)$--deep components of $\alpha$ is uniformly bounded and the lower divergence of $\alpha$ is dominated by $f$ otherwise. The following two lemmas complete the proof of this theorem.
\end{proof}
\begin{lem}
Assume that the length of $(\epsilon,R)$--deep components of $\alpha$ is not uniformly bounded. Then the lower divergence of $\alpha$ is dominated by $f$.
\end{lem}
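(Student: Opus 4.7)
The plan is to exploit the quasi-isometric embedding $\Gamma(P,S\cap P)\hookrightarrow\Gamma(G,S)$ supplied by Lemma~\ref{lemma2}, together with the geodesic extension property of $\Gamma(P,S\cap P)$, in order to route a detour from $\alpha(t-r)$ to $\alpha(t+r)$ through a single peripheral left coset $gP$. I would fix $(K,\lambda)$--quasi-isometric embedding constants for the inclusion (which is in fact a subgraph inclusion, so paths inside $\Gamma(P,S\cap P)$ are paths of the same length in $\Gamma(G,S)$). For large $r$ I would set $M:=K(r+\epsilon)+\lambda+2\epsilon$ and use the hypothesis to pick an $(\epsilon,R)$--deep component of $\alpha$ of length at least $2M$; by Lemma~\ref{lemma5a} this component lies in $\nbd{gP}{\epsilon}$ for a single coset $gP$. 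Taking $t$ to be the midpoint of the component, I would pick $p,z,q\in gP$ within $\epsilon$ of $\alpha(t-M),\alpha(t),\alpha(t+M)$ respectively. Since $\alpha$ is a geodesic, $d_G(p,z),d_G(q,z)\in[M-2\epsilon,M+2\epsilon]$, and the QI--embedding then forces $d_P(p,z),d_P(q,z)\in[M-2\epsilon,R']$ with $R':=KM+\lambda+2K\epsilon$.

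Next, I would use the geodesic extension property to extend geodesics from $z$ through $p$ and through $q$ to rays in $\Gamma(P,S\cap P)$, and select $\tilde p,\tilde q$ on those rays at $d_P$-distance exactly $R'$ from $z$. By Remark~\ref{rma1} the divergence of $\Gamma(P,S\cap P)$ is, up to equivalence, the single function $\Div_{\Gamma(P,S\cap P),e}\preceq f$, so $\tilde p$ and $\tilde q$ can be joined by a path in $\Gamma(P,S\cap P)$ of length $\preceq f(R')$ lying outside the open $d_P$-ball of radius $R'$ about $z$. I would then concatenate seven pieces in $\Gamma(G,S)$: follow $\alpha$ from $\alpha(t-r)$ back to $\alpha(t-M)$; jump $\leq\epsilon$ to $p$; travel along the ray from $p$ out to $\tilde p$; run the divergence path from $\tilde p$ to $\tilde q$; and retrace the symmetric three steps back to $\alpha(t+r)$. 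Since $R'$ is linear in $r$, the total length is $O(r)+f(O(r))$, which is dominated by $f$ via the rescaling built into $\preceq$.

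The main obstacle, and the most delicate part of the argument, is verifying that this path avoids the open $d_G$-ball $B(\alpha(t),r)$. The $\alpha$-segments are fine because $\alpha$ is a geodesic moving away from $\alpha(t)$, and the $\leq\epsilon$-jumps have endpoints at $d_G$-distance $\geq M-\epsilon\geq r$ from $\alpha(t)$. For any vertex $u\in gP$ on the portions inside the coset one has $d_P(u,z)\geq M-2\epsilon$, and the QI--embedding inequality $d_P(u,z)\leq K\,d_G(u,z)+\lambda$ then gives $d_G(u,z)\geq(M-2\epsilon-\lambda)/K=r+\epsilon$, whence $d_G(u,\alpha(t))\geq r$. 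The calibration is tight: $M$ must be linear in $r$ with slope exactly $K$ (the QI multiplicative constant) so that this inequality comes out at $r$ rather than at something smaller, and one must also verify that $R'\leq K^2r+\mathrm{const}$ so that $f(R')\preceq f(r)$. Once these calibrations are in place, one concludes $\rho_\alpha(r,t)\leq A\,f(Br)+Cr$, whence $\ldiv_\alpha\preceq f$.
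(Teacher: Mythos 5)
Your proposal is correct and follows essentially the same strategy as the paper: locate a sufficiently long $(\epsilon,R)$--deep component in a single coset $gP$, project its midpoint and two far-apart points to $gP$, normalize the two projected endpoints onto a common sphere about the projected midpoint inside $\Gamma(P,S\cap P)$, apply the peripheral divergence function (dominated by $f$) to detour around that ball, and use the distortion bound of the quasi-isometric embedding to check the detour misses the $d_S$--ball of radius $r$ about $\alpha(t)$. The only differences are cosmetic calibrations (you push the endpoints outward to a sphere of radius $R'\approx KM$ via geodesic extension, while the paper truncates geodesics inward to a sphere of radius $Mr/8L$), so no further comment is needed.
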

\begin{proof}
There are a constants $L\geq 1$ and $K>0$ such that the natural embedding of $\Gamma(P,P\cap S)$ into $\Gamma(G,S)$ is $(L,K)$--quasi-isometry. Let $M=32L^2$ and $D=8\epsilon+8K$. We are going to prove that for each $r>D$ \[ldiv_{\alpha}(r)\leq f\bigl(\frac{Mr}{8L}\bigr)+2Mr.\] 
Since the length of $(\epsilon,R)$--deep components of $\alpha$ is not uniformly bounded, then there is an $(\epsilon,R)$--deep component $c$ of $\alpha$ with length at least $Mr$. Assume that $c$ is an $(\epsilon,R)$--deep component with respect to the peripheral left coset $gP$. Let $Y$ be the translation of $\Gamma(P,P\cap S)$ by $g$. Thus, the natural embedding of $Y$ in $\Gamma(G,S)$ is also an $(L,K)$--quasi-isometry and $gP$ is the set of vertices of the graph $Y$. 

Let $x_1$, $x_2$ be two points in $c$ such that $d_S(x_1,x_2)=Mr$. Let $x$ be a midpoint of the subsegment of $c$ with endpoints $x_1$ and $x_2$ such that $d_S(x,x_1)=d_S(x,x_2)=Mr/2$. Let $y_1$, $y_2$ and $y$ be elements in $gP$ such that $d_S(x_1,y_1)<\epsilon$, $d_S(x_2,y_2)<\epsilon$ and $d_S(x,y)<\epsilon$. Therefore, \[d_S(y_1,y)\geq d_S(x_1,x)-d_S(x_1,y_1)-d_S(x,y)\geq \frac{Mr}{2}-2\epsilon\geq\frac{Mr}{4}.\]
Thus, \[d_Y(y_1,y)\geq \frac{1}{L}d_S(y_1,y)-K\geq \frac{Mr}{4L}-K\geq \frac{Mr}{8L}.\]
Therefore, there is a point $y_3$ in the sphere with radius $Mr/{8L}$ about $y$ in $Y$ and a path $\gamma_1$ from $y_1$ to $y_3$ which lies outside the ball with radius $Mr/{8L}$ about $y$ in $Y$. Moreover, the length of $\gamma_1$ is bounded above by $Mr/{8L}$. Similarly, there is a point $y_4$ in the sphere with radius $Mr/{8L}$ about $y$ in $Y$ and a path $\gamma_2$ from $y_2$ to $y_4$ which lies outside the ball with radius $Mr/{8L}$ about $y$ in $Y$. Moreover, the length of $\gamma_2$ is bounded above by $Mr/{8L}$. Let $\gamma_3$ be a path from $y_3$ to $y_4$ which lies outside the ball with radius $Mr/{8L}$ about $y$ in $Y$ and the length of $\gamma_3$ is bounded above by $Div_{Y,g}(Mr/{8L})$. Obviously, $Div_{Y,g}=Div_{\Gamma(P,P\cap S),e}$ is bounded by the function $f$. Therefore, the length of $\gamma_3$ is bounded above by $f(Mr/{8L})$. Let $\gamma_4=\gamma_1\cup\gamma_3\cup\gamma_2$. Then the length of $\gamma_4$ is bounded above by $f(Mr/{8L})+Mr/{4L}$. We need to show that $\gamma_4$ lies outside the open ball with radius $2r$ about $x$ in $\Gamma(G,S)$.

Indeed, for each $u$ in $\gamma_4$, \[d_S(u,x)\geq d_S(y,u)-d_S(y,x)\geq \frac{1}{L}d_Y(y,u)-\frac{K}{L}-\epsilon\geq \frac{Mr}{8L^2}-\frac{K}{L}-\epsilon\geq \frac{Mr}{16L^2}\geq 2r.\]

Since $d_S(x_1,y_1)<\epsilon$, then there is a path $\gamma_5$ from $x_1$ to $y_1$ with length bounded by $\epsilon$. Similarly, there is a path $\gamma_6$ from $x_2$ to $y_2$ with length bounded by $\epsilon$. Let $\gamma_7=\gamma_5\cup\gamma_4\cup \gamma_6$. Then $\gamma_7$ is a path from $x_1$ to $x_2$ which lies outside the open ball with radius $r$ about $x$ in $\Gamma(G,S)$. Moreover, the length of $\gamma_7$ is bounded above by $f(Mr/{8L})+Mr/{4L}+2\epsilon$. Thus, the length of $\gamma_7$ is bounded above by $f(Mr/{8L})+Mr/{2L}$ by the choice of $r$.

Let $x_3$ be the point in $c$ which lies between $x_1$ and $x$ such that $d_S(x,x_3)=r$. Let $\gamma_8$ be a subsegment of $c$ with endpoints $x_1$, $x_3$. Then, $\gamma_8$ lies outsides the open ball with radius $r$ about $x$. Since $d_S(x,x_1)=Mr/2$, then the length of $\gamma_8$ is bounded above by $Mr/2$. Let $x_4$ be the point in $c$ which lies between $x_2$ and $x$ such that $d_S(x,x_4)=r$. Let $\gamma_9$ be a subsegment of $c$ with endpoints $x_2$, $x_4$. Then, $\gamma_9$ lies outsides the open ball with radius $r$ about $x$. Since $d_S(x,x_2)=Mr/2$, then the length of $\gamma_9$ is also bounded above by $Mr/2$. Let $\gamma_{10}=\gamma_8\cup\gamma_7\cup\gamma_9$. Then $\gamma_{10}$ is a path from $x_3$ to $x_4$ which lies outside the open ball with radius $r$ about $x$. Moreover, the length of $\gamma_{10}$ is bounded above by $f(Mr/{8L})+Mr/{2L}+Mr$. Thus, the length of $\gamma_{10}$ is bounded above by $f(Mr/{8L})+2Mr$ by the choice of $r$. Therefore, \[ldiv_{\alpha}(r)\leq f(\frac{Mr}{8L})+2Mr,\]
 which implies that $ldiv_{\alpha} \preceq f$.
\end{proof}

\begin{lem}
Assume that the length of $(\epsilon,R)$--deep components of $\alpha$ is uniformly bounded. Then the lower divergence of $\alpha$ is at least exponential.
\end{lem}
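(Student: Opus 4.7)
\medskip

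\textbf{Plan.} The natural tool here is Osin's logarithmic bounded coset penetration property (Lemma \ref{mainlemma}), which converts a short detour in $\Gamma(G,S)$ into a nearby vertex on a coned-off geodesic. Combined with Lemma \ref{lemma5b}, which identifies $G$-vertices on coned-off geodesics with transition points of $\Gamma(G,S)$-geodesics, this should force any path avoiding an $r$-ball around $\alpha(t)$ to have length at least exponential in $r$, provided $\alpha(t)$ is close (in $d_S$) to some transition point. The hypothesis that all $(\epsilon,R)$--deep components of $\alpha$ have length at most some uniform constant $B$ is exactly what guarantees such closeness: every point of $\alpha$ lies within $d_S$--distance $B$ of a transition point of $\alpha$.

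First I would fix the constant $K$ of Lemma \ref{mainlemma}, the constants $\epsilon,R,L$ of Lemma \ref{lemma5b}, and the bound $B$ on lengths of deep components. Fix $t\in\RR$ and take $r$ larger than $R+B$. Let $c=\alpha|_{[t-r,t+r]}$, which is a geodesic in $\Gamma(G,S)$ with endpoints in $G$, and let $\hat{c}$ be a geodesic in $\hat{\Gamma}(G,S,\PP)$ with the same endpoints. Choose $t'\in\RR$ with $|t-t'|\leq B$ such that $\alpha(t')$ is a transition point of $\alpha$; since $r>R+B$, the point $\alpha(t')$ is also an $(\epsilon,R)$--transition point of $c$. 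By Lemma \ref{lemma5b} there is a $G$-vertex $v$ on $\hat{c}$ with $d_S(v,\alpha(t'))\leq L$, hence $d_S(v,\alpha(t))\leq L+B$.

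Now let $\gamma$ be any path in $\Gamma(G,S)$ from $\alpha(t-r)$ to $\alpha(t+r)$ lying outside the open ball of radius $r$ about $\alpha(t)$. Viewing $\gamma$ as a path in $\hat{\Gamma}(G,S,\PP)$ with the same endpoints as $\hat{c}$, and noting that its length in the coned-off graph is bounded by its length in $\Gamma(G,S)$, Lemma \ref{mainlemma} applied to the vertex $v\in\hat{c}$ produces a vertex $w\in\gamma$ with
\[ d_S(w,v)\leq K\log_2\abs{\gamma}. \]
Since $w\in\gamma$ lies outside the open $r$-ball about $\alpha(t)$, the triangle inequality gives
\[ r\leq d_S(w,\alpha(t))\leq d_S(w,v)+d_S(v,\alpha(t))\leq K\log_2\abs{\gamma}+L+B, \]
and therefore $\abs{\gamma}\geq 2^{(r-L-B)/K}$.

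Taking the infimum over all such $\gamma$ yields $\rho_\alpha(r,t)\geq 2^{(r-L-B)/K}$ for every $t\in\RR$, so $ldiv_\alpha(r)\geq 2^{(r-L-B)/K}$ for all sufficiently large $r$, giving an at least exponential lower bound as required. The only delicate point is the transfer of the transition-point condition from $\alpha$ to the finite subsegment $c$, which is why I enforce $r>R+B$; once this is set up, everything else is the standard bounded-coset-penetration argument.
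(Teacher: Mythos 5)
Your proof is correct and follows essentially the same route as the paper's: use the uniform bound on deep components to find a transition point of $\alpha$ near $\alpha(t)$, apply Lemma \ref{lemma5b} to get a nearby $G$--vertex on the coned-off geodesic, and then invoke Lemma \ref{mainlemma} to force $\abs{\gamma}$ to be exponential in $r$. The only differences are cosmetic (the paper rescales the radius to $2Kr$ to get the clean bound $2^r$, whereas you keep $r$ and absorb the constants into the exponent), and you are in fact slightly more careful than the paper in justifying that a transition point of the bi-infinite geodesic $\alpha$ is also a transition point of the finite subsegment $c$.
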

\begin{proof}
Suppose that the length of each $(\epsilon,R)$--deep components of $\alpha$ is bounded by a number $D$. Let $K$ be the constant in Lemma \ref{mainlemma} and $L$ the constant in Lemma \ref{lemma5b}. We need to show that $ldiv_{\alpha}(2Kr)\geq 2^r$ for each $r>D+L$.

For each $t\in\RR$, let $\gamma$ be an arbitrary path from $\alpha(t-2Kr)$ to $\alpha(t+2Kr)$ which lies outside the open ball with radius $2Kr$ about $\alpha(t)$. Since the length of each $(\epsilon,R)$--deep components of $\alpha$ is bounded by $D$, then there is an $(\epsilon,R)$--transition point $u$ of $\alpha$ such that the distance between $\alpha(t)$ and $u$ is bounded above by $D$. Thus, $\gamma$ lies outside the open ball with radius $2Kr-D$ about $u$. Let $\bar{\alpha}$ be a geodesic in $\hat{\Gamma}(G,S,\PP)$ connecting $\alpha(t-2Kr)$ and $\alpha(t+2Kr)$. By Lemma \ref{lemma5b}, there is a $G$--vertex $v$ in $\bar{\alpha}$ such that the distance between $u$ and $v$ is bounded above by $L$ with respect to $d_S$. Thus, $\gamma$ lies outside the open ball with radius $2Kr-D-L$ about $v$. Therefore, $\gamma$ lies outside the open ball with radius $Kr$ about $v$ by the choice of $r$.

By Lemma \ref{mainlemma}, there is a point $w$ in $\gamma$ such that $d_S(v,w)\leq K \log_2{\abs{\gamma}}$. Also, the distance between $v$ and $w$ is bounded below by $Kr$. Then, the length of $\gamma$ is bounded below by $2^r$. Thus, $\rho_{\alpha}(2Kr,t)\geq 2^r$. Since $t$ is an arbitrary number and $ldiv_{\alpha}(2Kr)=\inf_{t\in\RR}\rho_{\alpha}(2Kr,t)$, then $ldiv_{\alpha}(2Kr) \geq 2^r$.
Therefore, the lower divergence of $\alpha$ is at least exponential.
\end{proof}

We now show the connection between the divergence spectrum of a relatively hyperbolic group and the divergence spectra of its peripheral subgroups. We first prove some results on relatively hyperbolic groups that help us establish the connection.

\begin{lem}
\label{b4}
Let $(G,\PP)$ be a relatively hyperbolic group with finite generating set $S$. Let $P$ be a peripheral subgroup of $G$ such that the set $T=S\cap P$ generates $P$. Let $L\geq 1$ be a number such that $d_T(u,v)\leq L~d_S(u,v)+L$ for each $u$, $v$ in $\Gamma(P,T)$. Let $r$ be an arbitrary number greater than $L$. Let $\gamma$ a simple path in $\Gamma(G,S)$ such that $\gamma$ lies in the $r$--neighborhood of $P$ and the endpoints of $\gamma$ lie in $\Gamma(P,T)$. If the length of $\gamma$ is greater than $r$, then there is a path $\gamma'$ in $\Gamma(P,T)$ with the same endpoints as $\gamma$ such that the Hausdorff distance between $\gamma$, $\gamma'$ is bounded above by $6Lr$ and $\abs{\gamma'}\leq 4L~\abs{\gamma}$. 
\end{lem}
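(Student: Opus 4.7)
The plan is to discretize $\gamma$ by sampling it at roughly evenly spaced points along the arc, project each sample to a nearby vertex of $P$, and concatenate geodesic segments between consecutive projections in $\Gamma(P,T)$ to produce $\gamma'$.

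First, parametrize $\gamma$ by arc length and place division points $x_0, x_1, \dots, x_n$ where $x_0$ and $x_n$ are the two endpoints of $\gamma$ (which lie in $P$ by hypothesis) and consecutive $x_i$ are at arc-length distance at most $r$; this gives $n$ on the order of $|\gamma|/r$. Since $\gamma$ lies in the open $r$-neighborhood of $P$, choose $y_i \in P$ for each interior index with $d_S(x_i, y_i) < r$, and set $y_0 = x_0$ and $y_n = x_n$.

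Next, bound $d_S(y_i, y_{i+1}) \leq d_S(y_i, x_i) + d_S(x_i, x_{i+1}) + d_S(x_{i+1}, y_{i+1}) \leq 3r$ by the triangle inequality. Applying the quasi-isometric embedding estimate $d_T \leq L \cdot d_S + L$ (which follows from Lemma \ref{lemma2}), together with the assumption $r > L$, yields $d_T(y_i, y_{i+1}) \leq 3Lr + L \leq 4Lr$. Let $\sigma_i$ be a $T$-geodesic in $\Gamma(P,T)$ from $y_i$ to $y_{i+1}$, so $|\sigma_i| \leq 4Lr$, and define $\gamma' = \sigma_0 \cup \sigma_1 \cup \cdots \cup \sigma_{n-1}$, a path in $\Gamma(P,T)$ from $x_0$ to $x_n$. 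Its length satisfies $|\gamma'| \leq 4Lr \cdot n$; calibrating the sampling spacing carefully against the hypothesis $|\gamma| > r$ so that $r \cdot n$ is comparable to $|\gamma|$ produces the desired bound $|\gamma'| \leq 4L|\gamma|$.

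Finally, to control the Hausdorff distance, observe that every point of $\gamma$ lies within arc-length at most $r$ of some $x_i$, hence within $d_S$-distance at most $2r$ of the corresponding $y_i \in \gamma'$. Conversely, for any point $p$ on $\sigma_i$ we have $d_T(p, y_i) \leq |\sigma_i| \leq 4Lr$, so $d_S(p, y_i) \leq 4Lr$ and $d_S(p, x_i) \leq 5Lr$, giving a Hausdorff-distance bound of order $5Lr$, with the stated $6Lr$ absorbing the slack. The conceptual input is entirely the quasi-convexity of peripheral cosets from Lemma \ref{lemma2}; the only real obstacle is the discrete bookkeeping required to match the advertised constants $4L$ and $6Lr$ exactly, which amounts to choosing the sampling step and the bounds on the end segments with some care.
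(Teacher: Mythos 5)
Your proposal is correct and follows essentially the same route as the paper: subdivide $\gamma$ into arcs of length comparable to $r$, push the division points to nearby vertices of $P$, join consecutive images by geodesics in $\Gamma(P,T)$, and estimate lengths and Hausdorff distance via the quasi-isometric embedding from Lemma \ref{lemma2}. The one calibration you flag is resolved exactly as you anticipate: the paper takes the subarcs to have length \emph{between} $r$ and $2r$ (rather than at most $r$), so that the number of division points is at most $\abs{\gamma}/r$ and the per-segment bound $L(\abs{\gamma_i}+2r)+L$ sums to $4L\abs{\gamma}$.
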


\begin{proof}
We divide $\gamma$ into $m$ subpaths $\gamma_1, \gamma_2,\cdots, \gamma_m$ with length between $r$ and $2r$ by $m+1$ points $u_0, u_1, \cdots, u_m$, where $u_0$ and $u_m$ are the two endpoints of $\gamma$. Then, $\abs{\gamma}=\abs{\gamma_1}+\abs{\gamma_2}+\cdots+\abs{\gamma_m}$. Let $v_0, v_1, \cdots, v_m$ be elements in $\PP$ such that $d_S(u_i,v_i)<r$, $u_0=v_0$ and $u_m=v_m$. Let $\gamma'_i$ be a geodesic in $\Gamma(P,T)$ connecting $v_i$, $v_{i-1}$ and $\gamma'=\gamma'_1 \cup \gamma'_2\cup\cdots\cup\gamma'_m$. For each $i$, 
\[d_T(v_i,v_{i-1})\leq L~d_S(v_i,v_{i-1})+L\leq L\bigl(d_S(v_i,u_i)+d_S(u_i,u_{i-1})+d_S(u_{i-1}, v_{i-1})\bigr)+L\leq L~\bigl(\abs{\gamma_i}+2r\bigr)+L\]
Thus, $\abs{\gamma'}\leq L\bigl(\abs{\gamma}+2rm\bigr)+Lm\leq L\bigl(\abs{\gamma}+3rm\bigr)\leq 4L~\abs{\gamma}$

For each $u$ in $\gamma$, $u$ must lie in some $\gamma_i$. Thus, the distance between $u$ and $u_i$ is bounded above by $2r$. Also, the distance between $u_i$ and $v_i$ is bounded above by $r$. Then, the distance between $u$ and $v_i$ is bounded above by $3r$. Thus, $\gamma$ lies in the $3r$--neighborhood of $\gamma'$. For each $v$ in $\gamma'$, $v$ must lie in some $\gamma'_i$. Thus, the distance between $v$ and $v_i$ is bounded above by the length of $\gamma'_i$. Also, length of $\gamma'_i$ is bounded above by $L~\bigl(\abs{\gamma_i}+2r\bigr)+L$ and length of $\gamma_i$ is bounded above by $2r$. Then, the distance between $v$ and $v_i$ is bounded above by $4Lr+L$. Since the distance between $v_i$ and $u_i$ is bounded above by $r$, then the distance between $v$ and $u_i$ is bounded above by $4Lr+L+r$. Thus, $\gamma'$ lies in the $6Lr$--neighborhood of $\gamma$. Therefore, the Hausdorff distance between $\gamma$, $\gamma'$ is bounded above by $6Lr$
\end{proof}

\begin{lem}\cite[Lemma 8.27] {MR3361149}
\label{lemman1}
Let $(G,\PP)$ be relatively hyperbolic with a finite generating set $S$.
There exist constants $\epsilon$, $R$, $\sigma$, $K$ and $A$ such that the following hold: 
\begin{enumerate}
%\item A subgroup $H$ is relatively quasiconvex if and only if there is a constant $\kappa$ such that for each geodesic $c$ in $\Gamma(G,S)$ joining points in $H$, the set of $(\epsilon, R)$--transition points of $c$ lies in the $\kappa$--neighborhood of $H$.
%\item Let $\Delta=pqr$ be a triangle whose sides $p,q,r$ are geodesic in $\Gamma(G,S)$. Then for each $(\epsilon, R)$--transition point $v$ on $p$, there is an $(\epsilon, R)$--transition point $u$ in the union $q\cup r$ such that $d_S(u,v)\leq \sigma$.
\item Let $p$ and $q$ be paths in $\Gamma(G,S)$ such that $p^{-} =q^{-}$, $p^{+} =q^{+}$ and $q$ is geodesic in $\Gamma(G,S)$. For any $(\epsilon, R)$--transition point $v \in q$, there exists a vertex $w \in p$ such that $d_S(w,v)\leq K \log_2 \abs{p}+K$.
\item For each peripheral left coset $gP$ and any geodesic $c$ with endpoints outside $N_A(gP)$. If $\ell(c)>9\max\bigl\{d_S(c^+,gP);d_S(c^-,gP)\bigr\}$, then the path $c$ contains an $(\epsilon, R)$--transition point $w$ which lies in the $A$--neighborhood of $gP$.
\end{enumerate}
 Furthermore, the constants $\epsilon$ and $R$ satisfy the conclusion of Lemma~ \ref{lemma5a}.
\end{lem}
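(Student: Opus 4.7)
\emph{Plan.} Part (1) is a transition-point version of Lemma \ref{mainlemma}, phrased in the non-coned-off Cayley graph $\Gamma(G,S)$; I would deduce it from Lemma \ref{mainlemma} together with the transition-point/$G$-vertex correspondence of Lemma \ref{lemma5b}. Part (2) is more geometric, and I would attack it by building a competing path in $\hat\Gamma(G,S,\PP)$ that uses the cone vertex $v_{gP}$, then pulling geometric information back to $c$ via the same correspondence.

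\emph{Part (1).} Let $\hat q$ be a geodesic of $\hat\Gamma(G,S,\PP)$ with the same endpoints as $q$, and regard $p$ as a path in $\hat\Gamma$; since the cone edges only shorten paths, the $\hat\Gamma$-length of $p$ is at most its $S$-length. Given an $(\epsilon,R)$-transition point $v$ of $q$, Lemma \ref{lemma5b} (with the constants $\epsilon,R,L$ provided there) furnishes a $G$-vertex $v'$ of $\hat q$ with $d_S(v,v')\leq L$. Applying Lemma \ref{mainlemma} to the pair $p,\hat q$ yields a vertex $w$ of $p$ with $d_S(w,v')\leq K_0\log_2\abs{p}$, where $K_0$ is the constant from Lemma \ref{mainlemma}. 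The triangle inequality then gives $d_S(w,v)\leq K_0\log_2\abs{p}+L$, which is the stated bound after enlarging the multiplicative and additive constants.

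\emph{Part (2).} Set $a=d_S(c^-,gP)$, $b=d_S(c^+,gP)$, $M=\max\{a,b\}$, and let $x_0,y_0 \in gP$ realize these distances. The concatenation $c^- \to x_0 \to v_{gP} \to y_0 \to c^+$ is a path in $\hat\Gamma$ of length at most $a+b+1 \leq 2M+1$, so the $\hat\Gamma$-geodesic $\hat c$ between the endpoints of $c$ also has length at most $2M+1$. Combined with the hypothesis $\ell(c)>9M$, this means the endpoints of $c$ are far apart in $d_S$ yet very close in $d_{\hat\Gamma}$, so $\hat c$ must traverse peripheral cone vertices aggressively and, I expect, must pass within bounded $\hat\Gamma$-distance of $v_{gP}$ specifically.

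\emph{The main obstacle} is exactly this last expectation: making precise why $\hat c$ must use $v_{gP}$ rather than some other cone vertex, and translating $\hat\Gamma$-closeness to $d_S$-closeness. I would argue via a geodesic triangle in $\hat\Gamma$ on the vertex set $\{c^-,c^+,v_{gP}\}$ whose two short sides have length at most $M+1/2$; $\delta$-hyperbolicity then forces a point of $\hat c$ within a $\delta$-controlled $\hat\Gamma$-distance of $v_{gP}$, since otherwise the Gromov product $(c^-\cdot c^+)_{v_{gP}}$ would be large enough to yield two disjoint short $\hat\Gamma$-paths between $c^-$ and $c^+$, contradicting one of them being geodesic. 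Fineness of $\hat\Gamma$ then bounds the $d_S$-distance from a nearby $G$-vertex $w$ of $\hat c$ to $gP$ uniformly, because only finitely many cosets can neighbor $v_{gP}$ within that $\hat\Gamma$-radius. Invoking Lemma \ref{lemma5b} produces an $(\epsilon,R)$-transition point of $c$ within bounded $d_S$-distance of $w$, hence of $gP$; choosing $A$ to exceed all the accumulated constants finishes the proof.
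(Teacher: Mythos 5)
First, there is nothing in the paper to compare against: the lemma is imported verbatim as Lemma 8.27 of \cite{MR3361149} and the paper gives no proof, so your proposal has to be judged on its own. Your Part (1) is fine: passing to a coned-off geodesic $\hat q$, matching the transition point $v$ of $q$ with a $G$--vertex $v'$ of $\hat q$ via Lemma \ref{lemma5b}, applying Lemma \ref{mainlemma} to the pair $p$, $\hat q$, and absorbing the additive constant $L$ into $K$ is exactly the natural deduction, and the constants work out.

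Part (2) has two genuine gaps, and they sit exactly where you flagged ``the main obstacle.'' (a) The hyperbolicity step does not close: the Gromov product $(c^-\cdot c^+)_{v_{gP}}$ is bounded above only by $\min\bigl\{d_{\hat\Gamma}(v_{gP},c^-),\, d_{\hat\Gamma}(v_{gP},c^+)\bigr\}\leq M+\tfrac12$, where $M=\max\bigl\{d_S(c^+,gP),d_S(c^-,gP)\bigr\}$, and this is not uniform; thinness of the triangle on $\{c^-,c^+,v_{gP}\}$ therefore only places $v_{gP}$ within roughly $M$ of $\hat c$ in $\hat\Gamma$. To get a uniform bound you must show that $d_{\hat\Gamma}(c^-,c^+)$ is within an additive constant of $d_{\hat\Gamma}(c^-,v_{gP})+d_{\hat\Gamma}(v_{gP},c^+)$, i.e.\ that one cannot shortcut past $gP$. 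This is where the hypothesis $\ell(c)>9M$ must enter (it forces $d_S(x_0,y_0)\geq 7M$, far exceeding the uniform bound on the diameter of coarse intersections of distinct peripheral cosets), and it is essentially the content of the lemma; your sketch never uses it. (b) The transfer back to $d_S$ is incorrect as stated: fineness does not bound $d_S$--distances, and the $\hat\Gamma$--ball of radius $5/2$ about $v_{gP}$ already contains, in its entirety, every peripheral coset $hQ$ meeting $N_1(gP)$, hence points arbitrarily $d_S$--far from $gP$. A $G$--vertex $w$ of $\hat c$ with $d_{\hat\Gamma}(w,v_{gP})\leq 2\delta$ therefore need not be $d_S$--close to $gP$. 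What one actually needs is that $\hat c$ passes through $v_{gP}$ itself (or through a vertex of $gP$), so that Lemma \ref{lemma5b} can be applied to a $G$--vertex of $\hat c$ lying in $gP$; establishing that requires a bounded-coset-penetration or bounded-projection argument for $gP$, not a count of vertices in a metric ball of $\hat\Gamma$.
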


\begin{prop}
\label{p1}
Let $(G,\PP)$ be a relatively hyperbolic group with finite generating set $S$. Let $P$ be a peripheral subgroup of $G$ such that the set $T=S\cap P$ generates $P$. For each function $M\!: [1,\infty)\times [0,\infty)\rightarrow [0,\infty)$ there is a function $N\!: [1,\infty)\times [0,\infty)\rightarrow [0,\infty)$ such that the following hold. Let $\beta$ be an arbitrary $M$--Morse bi-infinite geodesic (geodesic ray) in $\Gamma(P,T)$. Then there is a $N$--Morse bi-infinite geodesic (geodesic ray with the same initial point as $\beta$) $\alpha$ in $\Gamma(G,S)$ such that the Hausdorff distance between $\alpha$ and $\beta$ is finite.
\end{prop}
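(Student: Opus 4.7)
The plan is to first promote the Morse condition on $\beta$ from $\Gamma(P,T)$ to $\Gamma(G,S)$, and then apply the geodesic-straightening result Lemma \ref{b1}(3). By Lemma \ref{lemma2}, the inclusion $\Gamma(P,T)\hookrightarrow\Gamma(G,S)$ is a quasi-isometric embedding, so there is a constant $L\geq 1$, depending only on $(G,\PP)$ and $S$, such that $\beta$ viewed as a path in $\Gamma(G,S)$ is a bi-infinite $(L,L)$-quasi-geodesic.

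The main step is to show that $\beta$ is $N_0$-Morse in $\Gamma(G,S)$, where $N_0$ depends only on $M$ and the ambient constants. Let $\sigma$ be any $(K',L')$-quasi-geodesic in $\Gamma(G,S)$ with endpoints on $\beta$ (hence on $P$). By Lemma \ref{lemma2}, $\sigma$ lies in $N_{A_1}(P)$ for some $A_1 = A_1(K',L',0)$. Lemma \ref{b4} with $r=A_1$ then produces a path $\sigma'$ in $\Gamma(P,T)$ with the same endpoints as $\sigma$, with Hausdorff distance at most $6LA_1$ from $\sigma$, and with $\abs{\sigma'}\leq 4L\abs{\sigma}$. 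The crucial claim is that $\sigma'$ is itself a $(K'',L'')$-quasi-geodesic in $\Gamma(P,T)$ with $K''$, $L''$ depending only on $K'$, $L'$, $L$ and $A_1$. This is read off the explicit construction of $\sigma'$ in the proof of Lemma \ref{b4}: $\sigma'$ is the concatenation of $T$-geodesic segments $\sigma'_k$ joining projections $v_i\in P$ of consecutive subdivision points $u_i$ of $\sigma$ spaced between $A_1$ and $2A_1$ apart. Since $\sigma$ is a quasi-geodesic in $\Gamma(G,S)$, $d_S(u_i,u_j)\geq \abs{i-j}A_1/K' - L'$, so $d_S(v_i,v_j)$ grows linearly in $\abs{i-j}$ up to additive error, and the QI embedding transfers this to a lower bound $d_T(v_i,v_j)\geq \abs{i-j}A_1/(K'L) - C$; together with the uniform upper bound $\abs{\sigma'_k}\leq 4LA_1+L$ on each segment, this provides the no-backtracking estimate defining a quasi-geodesic. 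The $M$-Morseness of $\beta$ in $\Gamma(P,T)$ then forces $\sigma'\subset N_{M(K'',L'')}(\beta)$, and combining with the Hausdorff bound from Lemma \ref{b4} yields $\sigma\subset N_{M(K'',L'')+6LA_1}(\beta)$ in $\Gamma(G,S)$. Setting $N_0(K',L') = M(K'',L'')+6LA_1$ establishes the Morse property of $\beta$ in $\Gamma(G,S)$.

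With $\beta$ now an $N_0$-Morse bi-infinite quasi-geodesic in the proper geodesic space $\Gamma(G,S)$, Lemma \ref{b1}(3) produces a bi-infinite geodesic $\alpha$ at finite Hausdorff distance from $\beta$ with Morse gauge $N$ depending only on $N_0$, hence only on $M$. For the geodesic ray case one instead uses approximating geodesic segments $\alpha_n$ from $\beta(0)$ to $\beta(n)$: the Morse bound of the main step confines every $\alpha_n$ to a fixed neighborhood of $\beta$ independent of $n$, and the Arzela-Ascoli theorem together with properness of $\Gamma(G,S)$ extracts a subsequential limit geodesic ray starting at $\beta(0)$ that remains Morse. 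The principal obstacle is the quasi-geodesic claim for $\sigma'$: Lemma \ref{b4} bounds only the total length of $\sigma'$, so one has to re-enter its proof and track subpath lengths of $\sigma'$ against the linear $d_T$-spreading of the projection points $v_i$. Once that estimate is secured, the remainder of the argument is a clean assembly of the already-established lemmas.
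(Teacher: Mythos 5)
Your argument is correct and rests on the same technical engine as the paper's proof, but the assembly is genuinely reorganized. The paper first builds $\alpha$ as an Arzela--Ascoli limit of geodesic segments $\alpha_n$ from $\beta(-n)$ to $\beta(n)$, using the projection-into-$\Gamma(P,T)$ trick once to show each $\alpha_n$ stays uniformly Hausdorff-close to $\beta$, and then runs the same projection trick a second time on an arbitrary ambient quasi-geodesic $\sigma$ with endpoints on $\alpha$ to verify that $\alpha$ is Morse. You instead prove directly that $\beta$, viewed as a uniform quasi-geodesic in $\Gamma(G,S)$ via Lemma \ref{lemma2}, is Morse there, and then delegate both the existence of $\alpha$ and its Morse gauge to Lemma \ref{b1}(3); this performs the projection argument only once and is the cleaner packaging. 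Two small caveats. First, Lemma \ref{b4} as stated applies to simple paths and subdivides by arc length, and its conclusion records only the total length of $\sigma'$, not its quasi-geodesicity; so you cannot cite it as a black box and must, as you say, rerun its construction on $\sigma$ itself and track the spreading of the projection points $v_i$ --- your estimate $d_T(v_i,v_j)\succeq \abs{i-j}$ versus $\abs{\sigma'_k}\leq 4LA_1+L$ does establish the quasi-geodesic property, provided $\sigma$ is tamed so that arc length is comparable to parameter length (a standard reduction). This is exactly the step the paper itself dispatches with ``it is not hard to see,'' so you are at parity with the paper's level of rigor, and you correctly flag it as the crux. Second, Lemma \ref{b1}(3) is stated only for bi-infinite quasi-geodesics, so your separate Arzela--Ascoli treatment of the ray case is needed, just as in the paper.
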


\begin{proof}
Here we only give the proof for the case that $\beta$ is a bi-infinite geodesic. The proof of the case that $\beta$ is a geodesic ray is almost identical. 

There are constants $L\geq 1$, $D>0$ such that $d_S(u,v)\leq d_T(u,v)\leq Ld_S(u,v)+L$ for each $u$, $v$ in $\Gamma(P,T)$ and every geodesic $\sigma$ in $\Gamma(G,S)$ with endpoints in $P$ must lie in the $D$--neighborhood of $P$ by Lemma \ref{lemma2}. For each $n$, let $\alpha_n$ be a geodesic in $\Gamma(G,S)$ connecting $\beta(-n)$ and $\beta(n)$. We divide a geodesic $\alpha_n$ in to $m$ segments by $(m+1)$ points $u_0, u_1,\cdots u_m$ in $\alpha_n$ such that $1/2\leq d_S(u_i,u_{i-1})\leq 1$. We choose $u_0=\beta(-n)$ and $u_m=\beta(n)$. Let $v_0, v_1,\cdots v_m$ be points in $P$ such that $d_S(u_i,v_i)<D$. We choose $v_0=\beta(-n)$ and $v_m=\beta(n)$. We connect $v_i$ and $v_{i-1}$ by a geodesic in $\Gamma(P,T)$ to construct a path $\gamma_n$ from $\beta(-n)$ to $\beta(n)$. It is not hard to see that $\gamma_n$ is a $(K_1, L_1)$--quasi-geodesic in $\Gamma(P,T)$ and the Hausdorff distance between $\gamma_n$, $\beta_n$ is bounded above by $D_1$, where $K_1, L_1, D_1$ only depends on $D$ and $L$. 

Since $\beta$ is a $M$--Morse geodesic in $\Gamma(P,T)$, then there is $D_2=D_2(M, K_1,L_1)$ such that the Hausdorff distance between $\gamma_n$ and $\beta\bigl([-n,n]\bigr)$ is bounded by the constant $D_2$ with respect to the metric $d_T$ for each $n$. Obviously, $D_2$ only depends on $D$, $L$, and $M$. Also $d_S(u,v)\leq d_T(u,v)$ for each $u$, $v$ in $\Gamma(P,T)$. Then the Hausdorff distance between $\gamma_n$ and $\beta\bigl([-n,n]\bigr)$ is also bounded by some constant $D_2$ with respect to the metric $d_S$ for each $n$. Therefore, the Hausdorff distance between $\beta\bigl([-n,n]\bigr)$ and $\alpha_n$ is bounded above by $D_1+D_2$. Since $\Gamma(G,S)$ is a proper space, then there is a Morse bi-infinite geodesic $\alpha$ in $\Gamma(G,S)$ such that the Hausdorff distance between $\alpha$ and $\beta$ is bounded above by $D_3=D_1+D_2+1$. Again, $D_3$ only depends on $D$, $L$, and $M$. 

We now prove that $\alpha$ is a $N$--Morse geodesic in $\Gamma(G,S)$, where $N$ only depends on the geometry of $\Gamma(G,S)$ and $M$. For each $K_0\geq 1$, $L_0>0$, there is $C=C(K_0, L_0, D_3)$ such that each $(K_0,L_0)$--quasi-geodesic segment whose endpoints lie in the $D_3$--neighborhood of $P$ must lie in the $C$--neighborhood of $P$ (see Lemma \ref{lemma2}). We can assume that $C>D_3$. Let $\sigma\!:[s_1, s_2]\to \Gamma(G,S)$ be any $(K_0,L_0)$--quasi-geodesic in $\Gamma(G,S)$ with endpoints on $\alpha$. Then, the endpoints of $\sigma$ lie in the $D_3$--neighborhood of $\beta$. In particular, the endpoints of $\sigma$ lie in the $D_3$--neighborhood of $P$. Thus, $\sigma$ lies in the $C$--neighborhood of $P$. We subdivide $[s_1,s_2]$ into m subintervals with lengths less than 1 and greater than 1/2 by $(m+1)$ numbers as follows:
\[s_1=w_0<w_1<\cdots<w_m=s_2.\]
Let $a_i$ be an element of $P$ such that $d_S(\sigma(w_i), a_i)\leq C$. We choose $a_0$ and $a_m$ in $\beta$. We connect $a_i$ and $a_{i-1}$ by a geodesic in $\Gamma(P,T)$ to construct a path $\sigma'$ with endpoints on $\beta$. It is not hard to see that $\sigma'$ is a $(K', L')$--quasi-geodesic in $\Gamma(P,T)$ and the Hausdorff distance between $\sigma$, $\sigma'$ is bounded above by $D'$, where $K'$, $L'$, and $D'$ only depend on and $D$, $L$, $M$, $K_0$, $L_0$ and $C$. There is a constant $M_1=M(K',L')$ such that $\sigma'$ lies in the $M_1$--neighborhood of $\beta$ with respect to $d_T$. Thus, $\sigma'$ also lies in the $M_1$--neighborhood of $\beta$ with respect to $d_S$. Therefore, $\sigma$ lies in the $(M_1+D')$--neighborhood of $\beta$ with respect to $d_S$. Also, the Hausdorff distance between $\alpha$ and $\beta$ is bounded above by $D_3$. Then, $\sigma$ lies in the $(M_1+D'+D_3)$--neighborhood of $\alpha$ with respect to $d_S$. Obviously, $M_1$, $D'$, and $D_3$ originally depends on $D$, $L$, $M$, $K_0$, $L_0$ and $C$. Therefore, $\alpha$ is a $N$--Morse geodesic in $\Gamma(G,S)$, where $N$ only depends on the geometry of $\Gamma(G,S)$ and $M$. 
 \end{proof}

\begin{lem}
\label{b3}
Let $(G,\PP)$ be relatively hyperbolic with a finite generating set $S$. There exist constants $K$ and $B$ such that the following hold. Let $gP$ be an arbitrary peripheral left coset. Let $r$ be a number greater than $B$ and $\gamma$ an arbitrary path that lies outside $N_r(gP)$ such that the endpoints of $\gamma$ lie in $\partial N_r(gP)$. If the distance between two endpoints of $\gamma$ is greater than $10r$, then the length of $\gamma$ is bounded below by $2^{r/{2K}}$.
\end{lem}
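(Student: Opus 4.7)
My plan is to choose $K$ to be the constant from Lemma \ref{lemman1}(1) (it is already named $K$ there) and to use Lemma \ref{lemman1}(2) to locate an $(\epsilon,R)$--transition point close to $gP$ on a geodesic shortcut; then Lemma \ref{lemman1}(1) will translate that into a lower bound on $\lvert\gamma\rvert$.

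Concretely, let $\epsilon$, $R$, $K$, and $A$ be the constants furnished by Lemma \ref{lemman1}. Write $x=\gamma^{-}$ and $y=\gamma^{+}$, so $d_S(x,gP)=d_S(y,gP)=r$ and $d_S(x,y)>10r$. Let $c$ be a geodesic in $\Gamma(G,S)$ from $x$ to $y$. Choose $B$ so large (in particular $B>A$) that both endpoints of $c$ lie outside $N_A(gP)$. Since the length $\ell(c)=d_S(x,y)>10r>9r=9\max\bigl\{d_S(x,gP),d_S(y,gP)\bigr\}$, Lemma \ref{lemman1}(2) applies and provides an $(\epsilon,R)$--transition point $w\in c$ with $d_S(w,gP)\leq A$.

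Now apply Lemma \ref{lemman1}(1) with $q=c$ and $p=\gamma$: there exists a vertex $w'\in\gamma$ such that
\[
d_S(w,w')\leq K\log_2\lvert\gamma\rvert+K.
\]
Since $w'$ lies on $\gamma$ and $\gamma$ is disjoint from $N_r(gP)$, while $w\in N_A(gP)$, the triangle inequality gives
\[
d_S(w,w')\geq d_S(w',gP)-d_S(w,gP)>r-A.
\]
Combining the two inequalities yields $r-A<K\log_2\lvert\gamma\rvert+K$, hence
\[
\lvert\gamma\rvert\geq 2^{(r-A-K)/K}.
\]

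Finally I choose $B=\max\bigl\{A+1,\,2(A+K)\bigr\}$; then for $r>B$ we have $r-A-K\geq r/2$, so $(r-A-K)/K\geq r/(2K)$ and therefore $\lvert\gamma\rvert\geq 2^{r/(2K)}$, as required. I do not expect any serious obstacle: the whole argument is a direct two-step application of Lemma \ref{lemman1}, and the only care needed is to take $B$ large enough both to legitimately invoke part (2) (endpoints outside $N_A(gP)$) and to absorb the additive constants from part (1) into the cleaner exponent $r/(2K)$.
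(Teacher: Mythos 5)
Your proposal is correct and follows essentially the same route as the paper: take a geodesic $c$ with the same endpoints as $\gamma$, use Lemma \ref{lemman1}(2) to find an $(\epsilon,R)$--transition point $w$ of $c$ within distance $A$ of $gP$, apply Lemma \ref{lemman1}(1) with $p=\gamma$, $q=c$ to get a vertex of $\gamma$ at distance at most $K\log_2\lvert\gamma\rvert+K$ from $w$, and play that against the lower bound $r-A$ coming from $\gamma$ avoiding $N_r(gP)$. The paper likewise sets $B=2(A+K)$ to absorb the additive constants, so your verification of the hypotheses of part (2) (endpoints outside $N_A(gP)$ and $\ell(c)>9r$) is the only place you are slightly more explicit than the original.
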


\begin{proof}
Let $\epsilon$, $R$, $\sigma$, $K$, $A$ be constants in Lemma \ref{lemman1} and let $B=2(A+K)$. Let $c$ be a geodesic in $\Gamma(G,S)$ with the same endpoints with $\gamma$. By Lemma \ref{lemman1}(2), $c$ contains an $(\epsilon, R)$--transition point $w$ which lies in the $A$--neighborhood of $gP$. By Lemma \ref{lemman1}(1), there exists a vertex $v \in \gamma$ such that $d_S(w,v)\leq K \log_2 \abs{\gamma}+K$. Also, 
\[d_S(w,v)\geq d_S(v,gP)-d_S(w,gP)\geq r-A\geq \frac{r}{2}+K.\]
Thus, $K \log_2 \abs{\gamma}\leq r/2$. Then, the length of $\gamma$ is bounded below by $2^{r/{2K}}$.
 \end{proof}

The function $f$ is \emph{subexponential} if for each $a>1$, there is $C>0$ such that $f(x)<a^x$ for all $x>C$.

\begin{prop}
\label{p2}
Let $(G,\PP)$ be a relatively hyperbolic group with finite generating set $S$. Let $P$ be a peripheral subgroup of $G$ such that the set $T=S\cap P$ generates $P$. Let $\alpha$ be a bi-infinite geodesic in $\Gamma(G,S)$ and $\beta$ bi-infinite geodesic in $\Gamma(P,T)$ such that the Hausdorff distance between them is finite. If the lower divergence of $\beta$ in $\Gamma(P,T)$ is subexponential, then the lower divergence of $\beta$ in $\Gamma(P,T)$ is equivalent to the lower divergence of $\alpha$ in $\Gamma(G,S)$.
\end{prop}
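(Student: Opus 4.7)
The plan is to prove $\ldiv_\alpha \preceq \ldiv_\beta$ and $\ldiv_\beta \preceq \ldiv_\alpha$ separately; only the second will require the subexponentiality hypothesis. After the standard reparametrization I will assume $\beta$ has been shifted so that $d_S(\alpha(t), \beta(t)) \leq H'$ for all $t$ and some constant $H'$ depending only on the finite Hausdorff distance between $\alpha$ and $\beta$ together with the quasi-isometry constants of $\Gamma(P,T) \hookrightarrow \Gamma(G,S)$. By Lemma \ref{lemma2}, I will fix $L \geq 1$ satisfying $d_T(u,v) \leq L d_S(u,v) + L$ for all $u,v \in P$.

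For the easy direction $\ldiv_\alpha \preceq \ldiv_\beta$, I will take a near-optimal path $\sigma'$ in $\Gamma(P,T)$ around $\beta(s)$ of radius $r_0 = Lr + O(1)$, view it in $\Gamma(G,S)$ via the inclusion of subgraphs (which preserves lengths), and use $d_T \leq L d_S + L$ to see that $\sigma'$ avoids $B_{d_S}(\beta(s), r + H')$, hence $B_{d_S}(\alpha(t), r)$ when $t$ satisfies $d_S(\alpha(t), \beta(s)) \leq H'$. Connecting the endpoints $\beta(s \pm r_0)$ of $\sigma'$ to $\alpha(t \pm r)$ by short connectors that travel outward along $\alpha$ and then jump a distance $\leq H'$ to $\beta$ produces a competing path in $\Gamma(G,S)$ around $\alpha(t)$ of length $\leq \ldiv_\beta(r_0) + O(r)$. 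This yields $\ldiv_\alpha \preceq \ldiv_\beta$, and since $\ldiv_\beta$ is subexponential, so is $\ldiv_\alpha$ --- a fact I will need below.

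For the harder direction $\ldiv_\beta \preceq \ldiv_\alpha$, let $K$ be the constant of Lemma \ref{b3}. Fix $s$, choose $t$ with $d_S(\alpha(t), \beta(s)) \leq H'$, set $r' = 2r + C_1$ for a constant $C_1$ to be determined, and take a near-optimal path $\sigma$ around $\alpha(t)$ of radius $r'$ with $|\sigma| \leq \ldiv_\alpha(r') + 1$. I will fix a small positive constant $c$ depending only on $L$ (concretely $c \leq 1/(2(10+66L))$), set $R = cr'$, and analyze the excursions of $\sigma$ outside $N_R(P)$. By subexponentiality of $\ldiv_\alpha$, for all sufficiently large $r$ one has $|\sigma| < 2^{R/(2K)}$, so Lemma \ref{b3} forces every such excursion to have endpoints at $d_S$-distance at most $10R$; replacing each by a $d_S$-geodesic (which stays within $10R$ of its endpoints, hence within $11R$ of $P$ and outside $B_{d_S}(\alpha(t), r' - 10R)$) yields a path $\tau$ of length $\leq |\sigma|$ which lies in $N_{11R}(P)$ and avoids $B_{d_S}(\alpha(t), r' - 10R)$. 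After two short endpoint adjustments of length $\leq H'$ to push the endpoints onto $P$, Lemma \ref{b4} converts a simple subpath of $\tau$ into a path $\tau''$ in $\Gamma(P,T)$ with the same endpoints, length $\leq 4L(\ldiv_\alpha(r') + O(1))$, and within $d_S$-Hausdorff distance $6L(11R + H')$ of $\tau$. The choice of $c$ ensures that any purported point $u'' \in \tau''$ with $d_T(u'', \beta(s)) < r$ would, via $d_S \leq d_T$ on $P$ and the Hausdorff bound, force a point of $\tau$ into $B_{d_S}(\alpha(t), r' - 10R)$, a contradiction. Hence $\tau''$ avoids $B_{d_T}(\beta(s), r)$, and concatenating $\tau''$ with two short outward segments along $\beta$ from its endpoints to $\beta(s \pm r)$ completes a path in $\Gamma(P,T)$ around $\beta(s)$ of radius $r$ with length $\leq 4L \cdot \ldiv_\alpha(2r + C_1) + O(r)$, yielding $\ldiv_\beta \preceq \ldiv_\alpha$.

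The main obstacle is the joint calibration of constants: the excursion-replacement enlarges the tube around $P$ by a factor of order $R = cr'$, Lemma \ref{b4} inflates the Hausdorff distance by a further factor of $L$, and all of these must be balanced against $r$ so that each ball-avoidance condition survives. The subexponentiality hypothesis is invoked exactly once --- to rule out long excursions outside $N_R(P)$ --- and the entire argument rests on the dichotomy supplied by Lemma \ref{b3}: a path around $\alpha(t)$ either hugs $P$ or is exponentially long.
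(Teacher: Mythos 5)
Your overall strategy coincides with the paper's: both directions are handled by transporting competing paths between $\Gamma(G,S)$ and $\Gamma(P,T)$, with Lemma \ref{b4} doing the push into $\Gamma(P,T)$ and Lemma \ref{b3} supplying the dichotomy ``hug $P$ or be exponentially long.'' The one genuine difference is where subexponentiality enters. The paper applies it directly to $f=\ldiv_\beta$: for an \emph{arbitrary} competing path $\gamma$ around $\alpha(t)$, if some excursion outside $N_r(P)$ has widely separated endpoints then Lemma \ref{b3} forces $\abs{\gamma}\geq 2^{r/2K}-2L_1>f(r)$, so the target inequality $f(r)\leq 4L\abs{\gamma}+O(r)$ holds vacuously; no information about $\ldiv_\alpha$ is needed and the two directions stay independent. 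You instead prove the easy direction first, transfer subexponentiality to $\ldiv_\alpha$ (which is legitimate: domination preserves subexponentiality in the sense defined), and use it to rule out long excursions in a \emph{near-optimal} path. This is a valid alternative and your calibration ($R=cr'$ with $c$ small so the $66LR$ Hausdorff blow-up from Lemma \ref{b4} is absorbed by $r'/2$) is if anything cleaner than the paper's $M=84L$; the cost is a logical dependency between the two directions and a restriction to near-optimal paths.

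That restriction is where you have an actual error as written. You fix $s$, choose $t$ with $d_S(\alpha(t),\beta(s))\leq H'$, and then take a path around $\alpha(t)$ of radius $r'$ with length at most $\ldiv_\alpha(r')+1$. But $\ldiv_\alpha(r')=\inf_{t'}\rho_\alpha(r',t')$, and for your particular $t$ the quantity $\rho_\alpha(r',t)$ may strictly exceed $\ldiv_\alpha(r')+1$, so no such path need exist around that $\alpha(t)$. The fix is to reverse the quantifiers: first choose $t$ with $\rho_\alpha(r',t)\leq \ldiv_\alpha(r')+1$, take a path $\sigma$ around $\alpha(t)$ with $\abs{\sigma}\leq\rho_\alpha(r',t)+1$, and only then choose $s$ with $d_S(\alpha(t),\beta(s))\leq H'$; the conclusion $\ldiv_\beta(r)\leq\rho_\beta(r,s)\leq 4L\,\ldiv_\alpha(r')+O(r)$ survives unchanged. (The paper sidesteps this entirely by arguing for arbitrary $t$ and arbitrary $\gamma$ and taking infima at the end.) With that reordering, and the minor clarification that your endpoint adjustments should land on $\beta$ itself rather than merely on $P$ so that the final outward segments along $\beta$ make sense, your argument goes through.
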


\begin{proof}
Let $f$ be the lower divergence of $\beta$ in $\Gamma(P,T)$ and $g$ the lower divergence of $\alpha$ in $\Gamma(G,S)$. There is a constant $L\geq 1$ such that $d_T(u,v)\leq L~d_S(u,v)+L$ for each $u$, $v$ in $\Gamma(P,T)$. Let $L_1$ be the Hausdorff distance between $\alpha$ and $\beta$. The following two lemmas complete the proof of this proposition. 
\end{proof}

\begin{lem}
The function $f$ is dominated by the function $g$
\end{lem}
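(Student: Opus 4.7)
The plan is to upper-bound $f(r)$ by converting a near-optimal $\Gamma(G,S)$-path realizing $g(Mr)$ (for a large constant $M$) into a $\Gamma(P,T)$-path via Lemma \ref{b4}. A dichotomy on the length of this path---non-exponential vs.\ exponential in $r$---will let me handle the former via the Hausdorff-distance control of Lemma \ref{b4}, and the latter directly via the subexponentiality hypothesis on $f$.

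For the setup, let $K$ be the constant from Lemma \ref{b3}, and fix $M$ large relative to $L$, $L_1$, and $K$ (to be pinned down below; $M = 500L$ will suffice). For $r$ large, pick $s \in \RR$ with $\rho_\alpha(Mr,s) \leq g(Mr)+1$, and pick $t$ with $d_S\bigl(\alpha(s),\beta(t)\bigr) \leq L_1$. Let $\gamma$ be a simple path in $\Gamma(G,S)$ from $\alpha(s-Mr)$ to $\alpha(s+Mr)$ avoiding the $d_S$-ball of radius $Mr$ about $\alpha(s)$, with $|\gamma| \leq g(Mr)+1$; note $|\gamma| \geq 2Mr$ by the triangle inequality.

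In the first case, suppose $|\gamma| < 2^{5r/K}$. Applying Lemma \ref{b3} to the peripheral coset $P$ with neighborhood radius $10r$, every excursion of $\gamma$ outside $N_{10r}(P)$ must have endpoints at $d_S$-distance at most $100r$. I replace each such excursion by a $d_S$-geodesic of length $\leq 100r$; the resulting path $\gamma^*$ is no longer than $\gamma$, lies in $N_{60r}(P)$, and still avoids the $d_S$-ball of radius $Mr - 100r - L_1$ about $\beta(t)$. Prepending and appending segments of length $\leq L_1$ to reach nearby points $\beta(t_1),\beta(t_2) \in P$ (with $|t_i - t| \geq Mr - 2L_1$) produces a path $\tilde\gamma$ in $\Gamma(G,S)$ with endpoints in $P$, length $\leq g(Mr) + O(1)$, contained in $N_{70r}(P)$, avoiding the $d_S$-ball of radius $Mr - 100r - 3L_1$ about $\beta(t)$, and satisfying $|\tilde\gamma| \geq 2Mr - 2L_1 > 70r$. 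Lemma \ref{b4} with neighborhood radius $70r$ then yields a $\Gamma(P,T)$-path $\gamma'$ with the same endpoints, $|\gamma'| \leq 4L|\tilde\gamma|$, and Hausdorff distance $\leq 420Lr$ from $\tilde\gamma$. Choosing $M$ large enough ensures $\gamma'$ avoids the $d_S$-ball---hence the $d_T$-ball---of radius $r$ about $\beta(t)$. Concatenating $\gamma'$ with the subsegments of $\beta$ from $\beta(t \pm r)$ to $\beta(t_{1,2})$ (both lying outside the $d_T$-ball of radius $r$ about $\beta(t)$, by the lower bound on $|t_i - t|$) produces the desired path in $\Gamma(P,T)$ of $d_T$-length $\leq 4Lg(Mr) + O(r)$.

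In the second case, $|\gamma| \geq 2^{5r/K}$, so $g(Mr) \geq 2^{5r/K} - 1$ is exponential in $r$. By the subexponentiality of $f$, for every $a > 1$ there exists $C$ with $f(r) < a^r$ for $r > C$; picking $a < 2^{5/K}$ gives $f(r) \leq g(Mr)$ for $r$ large. Combining the two cases, $f(r) \leq 4Lg(Mr) + O(r)$ for $r$ sufficiently large, which gives $f \preceq g$. The main technical difficulty will be the Hausdorff-distance bookkeeping in the first case: the loss $420Lr$ incurred by Lemma \ref{b4} must be absorbed by the avoidance radius, which is what forces $M$ to be chosen large and requires the coupled use of Lemmas \ref{b3} and \ref{b4}.
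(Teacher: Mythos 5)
Your route is essentially the paper's: the same dichotomy (either every excursion of the path outside a linear neighborhood of $P$ has endpoints at linearly bounded $d_S$-distance, or some excursion is long and Lemma \ref{b3} together with the subexponentiality of $f$ finishes immediately), the same surgery replacing excursions by geodesics, the same application of Lemma \ref{b4} to push the resulting path into $\Gamma(P,T)$ with a linear Hausdorff-distance loss absorbed by taking $M$ large, and the same closing concatenation with subsegments of $\beta$. Working with a single near-optimal path realizing $g(Mr)$ instead of an arbitrary path followed by two infima is only a cosmetic difference. (Your specific value $M=500L$ is slightly too small when $L$ is close to $1$, since the avoidance radius surviving Lemma \ref{b4} is roughly $(M-420L-100)r$; but you left $M$ to be pinned down, and any larger multiple works.)

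The one genuine gap is in the final concatenation. For $\beta|_{[t-r,\,t_1]}\cup\gamma'\cup\beta|_{[t_2,\,t+r]}$ to be a competitor for $\rho_\beta(r,t)$, you need $t$ to lie \emph{between} $t_1$ and $t_2$ with margin at least $r$ on each side; otherwise one of the two $\beta$-subsegments runs through $\beta(t)$ and enters the forbidden ball. This does not follow from the bounds you record: you have $|t_i-t|\ge Mr-2L_1$, $|t_i-t|\le L(Mr+2L_1)+L$ and $|t_1-t_2|\ge 2Mr-2L_1$, and these are consistent with $t_1$ and $t_2$ lying on the same side of $t$ as soon as $L\ge 3$. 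The claim is true --- the parameter correspondence between $\alpha$ and $\beta$, two quasi-geodesics of $\Gamma(G,S)$ at Hausdorff distance at most $L_1$, is coarsely monotone, which forces betweenness once $Mr$ exceeds a constant depending only on $L$ and $L_1$ --- but it needs an argument. The paper handles exactly this point with its subdivision of the parameter interval between $s_1$ and $s_2$: it manufactures the center parameter inside that interval by construction, at the cost of the weaker bound $3L_1+1$ on its distance from the midpoint of the $\alpha$-segment. Add either that subdivision step or the coarse-monotonicity argument and your proof is complete.
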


\begin{proof}
Let $B$, $K$ be constants in Lemma \ref{b3} and $C$ a constant such that $f(r) < 2^{r/{2K}}-2L_1$ for each $r>C$. Let $M=84L$ and we need to show that for each $r>B+C+4L_1+2L+2$, the following inequality hold: \[f(r)\leq 4L~g(Mr)+2L(M+3)r.\]

For any number $t\in\RR$, let $\gamma$ be an arbitrary path from $\alpha(t-Mr)$ to $\alpha(t+Mr)$ that lies outside the open ball with radius $Mr$ about $\alpha(t)$ with respect to the metric $d_S$. We are going to show \[f(r)\leq 4L~\abs{\gamma}+2L(M+3)r.\] We can assume that $\gamma$ is a simple path. Let $s_1$ and $s_2$ be two real numbers such that $d_S\bigl(\beta(s_1), \alpha(t-Mr)\bigr)\leq L_1$ and $d_S\bigl(\beta(s_2), \alpha(t+Mr)\bigr)\leq L_1$. Let $\gamma_1$ be a geodesic connecting $\alpha(t-Mr)$ and $\beta(s_1)$. Let $\gamma_2$ be a geodesic connecting $\alpha(t+Mr)$ and $\beta(s_2)$. Let $\bar{\gamma}=\gamma_1\cup \gamma \cup \gamma_2$ then $\bar{\gamma}$ is a path from $\beta(s_1)$ to $\beta(s_2)$ which lies outside the open ball with radius $(Mr-L_1)$ about $\alpha(t)$. We can assume that $\bar{\gamma}$ is a single path.

Let $J$ be a subinterval of $\RR$ with endpoints $s_1$ and $s_2$. We need to show that there is $s\in J$ such that $d_S\bigl(\alpha(t), \beta(s)\bigr)\leq 3L_1+1$.%$\Phi\circ\alpha\bigl([t-Mr,t+Mr]\bigr)\subset N_{D_1}\bigl(\beta(I)\bigr)$. 

We subdivide $J$ into m subintervals with lengths less than 1 by $(m+1)$ numbers as follows:
\[s_1=w_0<w_1<\cdots<w_m=s_2 \text{ if } s_1<s_2\] 
or \[s_2=w_0<w_1<\cdots<w_m=s_1 \text{ if } s_2<s_1.\] 
Since the Hausdorff distance between $\alpha$ and $\beta$ is less than $L_1$, then there is $w'_i$ in $\RR$ such that $d_S\bigl(\alpha(w'_i), \beta(w_i)\bigr)\leq L_1$. We choose $w'_0=t-Mr$, $w'_m=t+Mr$ if $s_1<s_2$ and $w'_0=t+Mr$, $w'_m=t-Mr$ if $s_2<s_1$. Let $J_i$ be a subinterval of $\RR$ with end points $w'_{i-1}$ and $w'_i$. Obviously, $[t-Mr, t+Mr] \subset \bigcup J_i$. Thus, $t$ lies in some $J_i$. Therefore, \begin{align*} d_S\bigl(\alpha(t), \beta(w_i)\bigr)& \leq d_S\bigl(\alpha(t), \alpha(w'_i)\bigr)+d_S\bigl(\alpha(w'_i), \beta(w_i)\bigr) \\&\leq \abs{t-w'_i}+L_1 \\&\leq \abs{w'_i-w'_{i-1}}+L_1.\end{align*}

Also, \begin{align*} \abs{w'_i-w'_{i-1}}&= d_S\bigl(\alpha(w'_i), \alpha(w'_{i-1})\bigr)\\&\leq d_S\bigl(\alpha(w'_i), \beta(w_i)\bigr)+d_S\bigl(\beta(w_i), \beta(w_{i-1})\bigr)+d_S\bigl(\beta(w_{i-1}), \alpha(w'_{i-1})\bigr)\\&\leq d_S\bigl(\alpha(w'_i), \beta(w_i)\bigr)+d_T\bigl(\beta(w_i), \beta(w_{i-1})\bigr)+d_S\bigl(\beta(w_{i-1}), \alpha(w'_{i-1})\bigr)\\&\leq L_1+\abs{w_i-w_{i-1}}+L_1\leq 2L_1+1\end{align*}
Thus, $d_S\bigl(\alpha(t), \beta(w_i)\bigr)\leq 3L_1+1$. Let $s=w_i$ then $\bar{\gamma}$ is a path from $\beta(s_1)$ to $\beta(s_2)$ which lies outside the open ball with radius $(Mr-4L_1-1)$ about $\beta(s)$. We now show that $r\leq \abs{s-s_1}\leq L(M+1)r$. In fact, \begin{align*} \abs{s-s_1}&= d_T\bigl(\beta(s), \beta(s_1)\bigr)\\&\leq Ld_S\bigl(\beta(s),\beta(s_1)\bigr)+L\\&\leq L\biggl(d_S\bigl(\beta(s), \alpha(t)\bigr)+d_S\bigl(\alpha(t), \alpha(t-Mr)\bigr)+d_S\bigl(\alpha(t-Mr),\beta(s_1)\bigr)\biggr)+L\\&\leq L(3L_1+1+Mr+L_1)+L\\&\leq L(M+1)r 
\end{align*}

and \begin{align*} \abs{s-s_1}&= d_T\bigl(\beta(s), \beta(s_1)\bigr)\\&\geq d_S\bigl(\beta(s),\beta(s_1)\bigr)\\&\geq d_S\bigl(\alpha(t), \alpha(t-Mr)\bigr)-d_S\bigl(\alpha(t),\beta(s)\bigr)-d_S\bigl(\alpha(t-Mr),\beta(s_1)\bigr)\\&\geq (Mr-3L_1-1-L_1)\\&\geq r\end{align*}

Similarly, $r\leq \abs{s-s_2}\leq L(M+1)r$

Let $s_3$ be the number between $s$ and $s_1$ such that $\abs{s-s_3}=r$. Let $s_4$ be the number between $s$ and $s_2$ such that $\abs{s-s_3}=r$. Let $\beta_1$ be a subsegment of $\beta$ connecting $\beta(s_1)$ and $\beta(s_3)$. Let $\beta_2$ be a subsegment of $\beta$ connecting $\beta(s_2)$ to $\beta(s_4)$. Then $\beta_1$ and $\beta_2$ both lie outside the open ball with radius $r$ about $\beta(t)$ with respect metric $d_T$. Moreover, the length of $\beta_1$ and $\beta_2$ are both bounded by $L(M+1)r$. 

First we assume that $\bar{\gamma}$ lies in the $r$--neighborhood of $P$. By Lemma \ref{b4}, there is a path $\rho$ in $\Gamma(P,T)$ with the same endpoints with $\bar{\gamma}$, the Hausdorff distance between $\bar{\gamma}$, $\rho$ is bounded by $6Lr$ and $\abs{\rho}\leq 4L~\abs{\bar{\gamma}}$. Since $\bar{\gamma}$ lies outside the open ball with radius $(Mr-4L_1-1)$ about $\beta(s)$ with respect to the metric $d_S$, then $\rho$ lies outside the open ball with radius $(M-6L)r-4L_1-1$ about $\beta(s)$ with respect to the metric $d_S$. Thus, $\rho$ lies outside the open ball with radius $(M-6L)r-4L_1-1$ about $\beta(s)$ with respect to the metric $d_T$. Since $(M-6L)r-4L_1-1>r$, then $\rho$ lies outside the open ball with radius $r$ about $\beta(s)$ with respect to the metric $d_T$. Let $\gamma'=\beta_1\cup \rho\cup\beta_2$ then $\gamma'$ is a path from $\beta(s_3)$ to $\beta(s_4)$ that lies outside the ball with radius $r$ about $\beta(s)$ with respect to the metric $d_T$. Thus, \begin{align*} f(r)\leq \abs{\gamma'}&\leq \abs{\beta_1}+ \abs{\rho}+\abs{\beta_2}\\&\leq 4L~\abs{\bar{\gamma}}+2L(M+1)r\\&\leq 4L~\bigl(\abs{\gamma}+2L_1\bigr)+2L(M+1)r\\&\leq 4L~\abs{\gamma}+2L(M+3)r.\end{align*} 

We now assume that $\bar{\gamma}$ does not lie in the $r$--neighborhood of $P$. Let $x_1, x_2, \cdots, x_{2n-1}, x_{2n}$ be points in $\bar{\gamma} \cap \partial N_r(P)$ such that the subpath $\bar{\gamma_i}$ of $\bar{\gamma}$ connecting $x_{2i-1}$ to $x_{2i}$ must lie out side $N_r(P)$ and $\bar{\gamma}-(\cup \bar{\gamma_i})$ must lie inside $\bar{N_r(P)}$. If $d_S(x_{2i-1},x_{2i})>10r$ for some $i$, then the length of $\bar{\gamma_i}$ is bounded below by $2^{r/{2K}}$ by Lemma \ref{b3}. Thus, the length of $\bar{\gamma}$ is bounded below by $2^{r/{2K}}$. Therefore, the length of $\gamma$ is bounded below by $2^{r/{2K}}-2L_1$. This implies that \[f(r)\leq \abs{\gamma}\leq 4L~\abs{\gamma}+2L(M+3)r.\] 

We now can assume that $d_S(x_{2i-1},x_{2i})\leq 10r$. In the path $\bar{\gamma}$, we replace each $\bar{\gamma_i}$ by a geodesic with the same endpoints of $\bar{\gamma_i}$. Then, the new path $\rho_1$ must lie inside $N_{12r}(P)$. Obviously, $\rho_1$ lies in the $10r$--neighborhood $\bar{\gamma}$ and the length of $\rho_1$ is bounded above the length of $\bar{\gamma}$. Therefore, $\rho_1$ lies outside the open ball with radius $(M-10)r-4L_1-1$ about $\beta(s)$. By Lemma \ref{b4}, there is a path $\rho_2$ in $\Gamma(P,T)$ with the same endpoints with $\rho_1$, the Hausdorff distance between $\rho_1$, $\rho_2$ is bounded by $72Lr$ and $\abs{\rho_2}\leq 4L~\abs{\rho_1}$. Therefore, $\rho_2$ lies outside the open ball with radius $(M-72L-10)r-4L_1-1$ about $\beta(s)$ with respect to the metric $d_S$. Thus, $\rho_2$ lies outside the open ball with radius $r$ about $\beta(s)$ with respect to the metric $d_S$ by the choice of $M$ and $r$. Obviously, $\beta_2$ lies outside the open ball with radius $r$ about $\beta(s)$ with respect to the metric $d_T$. Let $\rho_3=\beta_1 \cup \rho_2 \cup \beta_2$, then $\rho_3$ is a path from $\beta(s_3)$ to $\beta(s_4)$ that lies outside the open ball with radius $r$ about $\beta(s)$ with respect to the metric $d_T$. Therefore, \begin{align*} f(r)\leq \abs{\rho_3}&\leq 4L~\abs{\rho_1}+2L(M+1)r\\&\leq 4L~\abs{\bar{\gamma}}+2L(M+1)r\\&\leq 4L~\bigl(\abs{\gamma}+2L_1\bigr)+2L(M+1)r\\&\leq 4L~\bigl(\abs{\gamma}\bigr)+2L(M+3)r.\end{align*} Thus, \[f(r)\leq 4L~g(Mr)+2L(M+3)r.\]
Therefore, $f\preceq g$.
\end{proof}

\begin{lem}
The function $g$ is dominated by the function $f$
\end{lem}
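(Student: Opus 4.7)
The plan is to mirror the structure of the preceding lemma in reverse: given a near-optimal path in $\Gamma(P,T)$ witnessing $f$, I push it into $\Gamma(G,S)$ via the inclusion and attach short connectors to produce a path witnessing an upper bound on $\rho_{\alpha}(r,t)$ for some $t$. The key facts I will use are (i) $d_S\leq d_T$ and $d_T\leq L\,d_S+L$ on $P$, (ii) the $d_S$--length of a path in $\Gamma(P,T)$ equals its $d_T$--length since each edge has the same length in both graphs, and (iii) the Hausdorff distance between $\alpha$ and $\beta$ is at most $L_1$. Notably, subexponentiality of $f$ will \emph{not} be needed in this direction; it was only used in the previous lemma to rule out deep excursions.

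Fix a large $r$, set $\rho$ slightly larger than $L(r+L_1+1)$, and choose $M$ so that $\rho\leq Mr$. First I would pick $s\in\RR$ with $\rho_{\beta}(\rho,s)\leq f(\rho)+1$ (if $f(\rho)=\infty$ the desired inequality is vacuous), producing a path $\gamma'$ in $\Gamma(P,T)$ from $\beta(s-\rho)$ to $\beta(s+\rho)$ of $d_T$--length $\leq f(\rho)+1$ that avoids the open $d_T$--ball of radius $\rho$ about $\beta(s)$. Next, using the Hausdorff bound, I would find $t,t_1,t_2\in\RR$ with $d_S\bigl(\alpha(t),\beta(s)\bigr)\leq L_1$, $d_S\bigl(\alpha(t_1),\beta(s-\rho)\bigr)\leq L_1$, and $d_S\bigl(\alpha(t_2),\beta(s+\rho)\bigr)\leq L_1$, reversing the parametrization of $\alpha$ if necessary so that $t_1<t<t_2$. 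A short triangle-inequality argument together with $d_T\leq L\,d_S+L$ then pins down $|t-t_i|\in[\,r+L_1,\,\rho+2L_1\,]$.

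I would then take the concatenation of the $\alpha$--subsegment from $\alpha(t-r)$ to $\alpha(t_1)$, a geodesic of $d_S$--length $\leq L_1$ from $\alpha(t_1)$ to $\beta(s-\rho)$, the path $\gamma'$ regarded as a path in $\Gamma(G,S)$, a geodesic of $d_S$--length $\leq L_1$ from $\beta(s+\rho)$ to $\alpha(t_2)$, and the $\alpha$--subsegment from $\alpha(t_2)$ to $\alpha(t+r)$. Each piece lies outside the open $d_S$--ball of radius $r$ about $\alpha(t)$: the two $\alpha$--subsegments by construction, the two short connectors because $d_S\bigl(\alpha(t_i),\alpha(t)\bigr)\geq r+L_1$, and the middle piece because any $u\in\gamma'$ satisfies $d_S\bigl(u,\beta(s)\bigr)\geq d_T\bigl(u,\beta(s)\bigr)/L-1\geq \rho/L-1\geq r+L_1$, hence $d_S\bigl(u,\alpha(t)\bigr)\geq r$. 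Summing $d_S$--lengths gives $\rho_{\alpha}(r,t)\leq f(\rho)+2\rho+O(1)\leq f(Mr)+2Mr+O(1)$; taking the infimum over $t$ yields $g(r)\leq f(Mr)+2Mr+O(1)$, so $g\preceq f$.

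The main obstacle I anticipate is simply the bookkeeping on parametrization orientation (ensuring $t_1<t<t_2$ after possibly flipping $\alpha$ or $\beta$) and the careful choice of the constants $M$, $\rho$, and the threshold on $r$ so that all the chained inequalities close up. There is no new geometric input here: in contrast with the previous lemma, none of the relatively hyperbolic machinery (deep components, transition points, Lemma~\ref{b3}) enters this direction, because the quasi-isometric embedding $\Gamma(P,T)\hookrightarrow\Gamma(G,S)$ alone already suffices to transport $\gamma'$ while respecting the required ball-avoidance.
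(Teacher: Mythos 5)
Your overall strategy coincides with the paper's: transport a near-optimal witness path for $f$ from $\Gamma(P,T)$ into $\Gamma(G,S)$ (using that $\Gamma(P,T)$ is a length-preserving subgraph), attach connectors of length at most $L_1$ supplied by the Hausdorff bound, and extend along $\alpha$ to reach $\alpha(t\pm r)$; you are also right that neither the subexponentiality of $f$ nor any of the relatively hyperbolic machinery is needed in this direction. However, the step you dismiss as ``bookkeeping on parametrization orientation'' conceals the one point that genuinely requires an argument. You need the parameter $t$ (chosen so that $\alpha(t)$ is $L_1$--close to $\beta(s)$) to lie \emph{between} $t_1$ and $t_2$: otherwise one of the two subsegments $[\alpha(t-r),\alpha(t_1)]$, $[\alpha(t_2),\alpha(t+r)]$ passes through $\alpha(t)$ itself, and the concatenation is neither a path from $\alpha(t-r)$ to $\alpha(t+r)$ nor disjoint from the ball. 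Reversing the parametrization of $\alpha$ only swaps the roles of $t_1$ and $t_2$; it does not exclude the case where both lie on the same side of $t$. And the triangle inequality alone does not exclude that case: from $|t-t_i|\in[\rho/L-1-2L_1,\ \rho+2L_1]$ and $|t_1-t_2|\geq 2\rho/L-1-2L_1$ one obtains a contradiction with $t_1,t_2$ on one side of $t$ only when $L<3$; for $L\geq 3$ (and the distortion constant $L$ of $P$ in $G$ can be large) these constraints are consistent with a non-monotone configuration.

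Ruling this out requires a connectedness (intermediate-value) argument, and that is precisely what the subdivision step in the paper's proof supplies: the paper cuts the $\alpha$--parameter interval $J$ between $s_1$ and $s_2$ into pieces of length less than $1$, lifts the cut points to nearby parameters on $\beta$, observes that the resulting intervals cover $[t-2Lr,t+2Lr]$ because consecutive ones share endpoints and the extreme ones are anchored at $t\mp 2Lr$, and thereby produces a point that is simultaneously close to $\beta(t)$ \emph{and guaranteed to lie in $J$}, i.e., between the two endpoint parameters. If you insert that argument (or any coarse-monotonicity argument for the correspondence between the parameters of $\alpha$ and $\beta$), the rest of your proof closes up; the remaining discrepancies --- your $\rho$ should be at least $L(r+3L_1+1)$ rather than roughly $L(r+L_1+1)$ to guarantee $|t-t_i|\geq r+L_1$, and comparing $f(\rho)$ with $f(Mr)$ uses the elementary estimate $f(\rho)\leq f(\rho')+2(\rho'-\rho)$ for $\rho\leq\rho'$ --- really are bookkeeping.
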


\begin{proof}
We will show that $g(r)\leq f(2Lr)+4(L+1)r$ for each $r>2L(2L_1+1)+3L_1+1$. For each $t\in\RR$, let $\gamma$ be an arbitrary path from $\beta(t-2Lr)$ to $\beta(t+2Lr)$ that lies outside the open ball with radius $2Lr$ about $\beta(t)$ with respect to the metric $d_T$. Since $d_T(u,v)\leq L~d_S(u,v)+L$ for each $u$, $v$ in $\Gamma(P,T)$, then $\gamma$ lies outside the open ball with radius $2r-1$ about $\beta(t)$ with respect to the metric $d_S$. Let $s_1$ and $s_2$ in $\RR$ such that $d_S\bigl(\alpha(s_1), \beta(t-2Lr)\bigr)\leq L_1$ and $d_S\bigl(\alpha(s_2), \beta(t+2Lr)\bigr)\leq L_1$. Let $\gamma_1$ be a geodesic connecting $\beta(t-2Lr)$, $\alpha(s_1)$ and $\gamma_2$ be a geodesic connecting $\beta(t+2Lr)$, $\alpha(s_2)$. Let $\bar{\gamma}=\gamma_1\cup \gamma \cup \gamma_2$ then $\bar{\gamma}$ is a path from $\alpha(s_1)$ to $\alpha(s_2)$ which lies outside the open ball with radius $(2r-L_1-1)$ about $\beta(t)$.

Let $J$ be a subinterval of $\RR$ with endpoints $s_1$ and $s_2$. We need to show that there is $s\in J$ such that $d_S\bigl(\beta(t), \alpha(s)\bigr)\leq 2L(L_1+1)+L_1$.%$\Phi\circ\alpha\bigl([t-Mr,t+Mr]\bigr)\subset N_{D_1}\bigl(\beta(I)\bigr)$. 

We subdivide $J$ into m subintervals with lengths less than 1 by $(m+1)$ numbers as follows:
\[s_1=w_0<w_1<\cdots<w_m=s_2 \text{ if } s_1<s_2\] 
or \[s_2=w_0<w_1<\cdots<w_m=s_1 \text{ if } s_2<s_1.\] 
Since the Hausdorff distance between $\alpha$ and $\beta$ is less than $L_1$, then there is $w'_i$ in $\RR$ such that $d_S\bigl(\beta(w'_i), \alpha(w_i)\bigr)\leq L_1$. We choose $w'_0=t-2Lr$, $w'_m=t+2Lr$ if $s_1<s_2$ and $w'_0=t+2Lr$, $w'_m=t-2Lr$ if $s_2<s_1$. Let $J_i$ be a subinterval of $\RR$ with end points $w'_{i-1}$ and $w'_i$. Obviously, $[t-2Lr, t+2Lr] \subset \bigcup J_i$. Thus, $t$ lies in some $J_i$. Therefore, \begin{align*} d_S\bigl(\beta(t), \alpha(w_i)\bigr)& \leq d_S\bigl(\beta(t), \beta(w'_i)\bigr)+d_S\bigl(\beta(w'_i), \alpha(w_i)\bigr)\\&\leq d_T\bigl(\beta(t), \beta(w'_i)\bigr)+d_S\bigl(\beta(w'_i), \alpha(w_i)\bigr) \\&\leq \abs{t-w'_i}+L_1 \\&\leq \abs{w'_i-w'_{i-1}}+L_1.\end{align*}

Also, \begin{align*} \abs{w'_i-w'_{i-1}}&= d_T\bigl(\beta(w'_i), \beta(w'_{i-1})\bigr)\\&\leq Ld_S\bigl(\beta(w'_i), \beta(w'_{i-1})\bigr)+L\\& \leq L\bigg(d_S\bigl(\beta(w'_i), \alpha(w_i)\bigr)+d_S\bigl(\alpha(w_i), \alpha(w_{i-1})\bigr)+d_S\bigl(\alpha(w_{i-1}), \beta(w'_{i-1})\bigr)\bigg)+L\\&\leq L(L_1+\abs{w_i-w_{i-1}}+L_1)+L\leq 2L(L_1+1)\end{align*}
Thus, $d_S\bigl(\alpha(t), \beta(w_i)\bigr)\leq 2L(L_1+1)+L_1$. Let $s=w_i$ then $\bar{\gamma}$ is a path from $\alpha(s_1)$ to $\alpha(s_2)$ which lies outside the open ball with radius $\bigl(2r-2L(L_1+1)-2L_1-1\bigr)$ about $\alpha(s)$. Thus, $\bar{\gamma}$ lies outside the open ball with radius $r$ about $\alpha(s)$ by the choice of $r$. We now show that $r\leq \abs{s-s_1}\leq (2L+1)r$. In fact, \begin{align*} \abs{s-s_1}&= d_S\bigl(\alpha(s), \alpha(s_1)\bigr)\\&\leq d_S\bigl(\alpha(s),\beta(t)\bigr)+d_S\bigl(\beta(t),\beta(t-2Lr)\bigr)+d_S\bigl(\beta(t-2Lr),\alpha(s_1)\bigr)\\&\leq 2L(L_1+1)+L_1+2Lr+L_1\\&\leq(2L+1)r
\end{align*}

and \begin{align*} \abs{s-s_1}&= d_S\bigl(\alpha(s), \alpha(s_1)\bigr)\\&\geq d_S\bigl(\beta(t),\beta(t-2Lr)\bigr)-d_S(\bigl(\beta(t),\alpha(s)\bigr)-d_S\bigl(\beta(t-2Lr,\alpha(s_1)\bigr)\\&\geq \frac{1}{L}d_T\beta(t),\beta(t-2Lr)\bigr)-1-2L(L_1+1)-L_1-L_1\\&\geq 2r-2L(L_1+1)-2L_1-1\geq r\end{align*}

Similarly, $r\leq \abs{s-s_2}\leq (2L+1)r$
 
Let $s_3$ be a number between $s$ and $s_1$ such that $\abs{s-s_3}=r$. Let $s_4$ be a number between $s$ and $s_2$ such that $\abs{s-s_4}=r$. Let $\alpha_1$ be a subsegment of $\alpha$ connecting $\alpha(s_1)$ and $\alpha(s_3)$. Let $\alpha_2$ be a subsegment of $\alpha$ connecting $\alpha(s_2)$ to $\alpha(s_4)$. Then $\alpha_1$ and $\alpha_2$ both lie outside the open ball with radius $r$ about $\alpha(s)$ with respect metric $d_S$. Moreover, the length of $\alpha_1$ and $\alpha_2$ are both bounded by $(2L+1)r$.

Let $\alpha_3=\alpha_1\cup\bar{\gamma}\cup \alpha_2$ then $\alpha_3$ is a path from $\alpha(s_3)$ to $\alpha(s_4)$ which lies outside the open ball with radius $r$ about $\alpha(s)$.

 Thus,
\[g(r)\leq \abs{\alpha_3}\leq \abs{\alpha_1}+\abs{\bar{\gamma}}+\abs{\alpha_2}\leq (2L+1)r+\abs{\gamma}+2L_1+(2L+1)r\leq \abs{\gamma}+4(L+1)r.\]
Therefore, $g(r)\leq f(Lr)+4(L+1)r$, which implies that $g\preceq f$.
\end{proof}

\begin{figure}
\begin{tikzpicture}[scale=0.7]

\draw (-14,1) node[circle,fill,inner sep=1pt, color=black](1){} -- (-13,2) node[circle,fill,inner sep=1pt, color=black](1){}-- (-12,1) node[circle,fill,inner sep=1pt, color=black](1){}-- (-13,0) node[circle,fill,inner sep=1pt, color=black](1){} -- (-14,1) node[circle,fill,inner sep=1pt, color=black](1){}; 

\draw (-14,1) node[circle,fill,inner sep=1pt, color=black](1){} -- (-14,3) node[circle,fill,inner sep=1pt, color=black](1){} -- (-12,3) node[circle,fill,inner sep=1pt, color=black](1){} --(-12,1) node[circle,fill,inner sep=1pt, color=black](1){};

\node at (-13,4) {$\Omega_1$};

\draw (-11,1) node[circle,fill,inner sep=1pt, color=black](1){} -- (-10,2) node[circle,fill,inner sep=1pt, color=black](1){}-- (-9,1) node[circle,fill,inner sep=1pt, color=black](1){}-- (-10,0) node[circle,fill,inner sep=1pt, color=black](1){} -- (-11,1) node[circle,fill,inner sep=1pt, color=black](1){}; 

\draw (-10,2) node[circle,fill,inner sep=1pt, color=black](1){} -- (-8,1) node[circle,fill,inner sep=1pt, color=black](1){}-- (-10,0) node[circle,fill,inner sep=1pt, color=black](1){};

\draw (-11,1) node[circle,fill,inner sep=1pt, color=black](1){} -- (-10,-1) node[circle,fill,inner sep=1pt, color=black](1){}-- (-9,1) node[circle,fill,inner sep=1pt, color=black](1){};

\draw (-11,1) node[circle,fill,inner sep=1pt, color=black](1){} -- (-11,3) node[circle,fill,inner sep=1pt, color=black](1){}-- (-8,3) node[circle,fill,inner sep=1pt, color=black](1){} -- (-8,1) node[circle,fill,inner sep=1pt, color=black](1){};

\node at (-9.5,4) {$\Omega_2$};

\draw (-7,1) node[circle,fill,inner sep=1pt, color=black](1){} -- (-6,2) node[circle,fill,inner sep=1pt, color=black](1){}-- (-5,1) node[circle,fill,inner sep=1pt, color=black](1){}-- (-6,0) node[circle,fill,inner sep=1pt, color=black](1){} -- (-7,1) node[circle,fill,inner sep=1pt, color=black](1){}; 

\draw (-6,2) node[circle,fill,inner sep=1pt, color=black](1){} -- (-4,1) node[circle,fill,inner sep=1pt, color=black](1){}-- (-6,0) node[circle,fill,inner sep=1pt, color=black](1){};

\draw (-6,2) node[circle,fill,inner sep=1pt, color=black](1){} -- (-3,1) node[circle,fill,inner sep=1pt, color=black](1){}-- (-6,0) node[circle,fill,inner sep=1pt, color=black](1){};

\draw (-4,1) node[circle,fill,inner sep=1pt, color=black](1){} -- (-5,-1) node[circle,fill,inner sep=1pt, color=black](1){}-- (-6,-1) node[circle,fill,inner sep=1pt, color=black](1){};

\draw (-7,1) node[circle,fill,inner sep=1pt, color=black](1){} -- (-6,-1) node[circle,fill,inner sep=1pt, color=black](1){}-- (-5,1) node[circle,fill,inner sep=1pt, color=black](1){};

\draw (-7,1) node[circle,fill,inner sep=1pt, color=black](1){} -- (-7,3) node[circle,fill,inner sep=1pt, color=black](1){}-- (-3,3) node[circle,fill,inner sep=1pt, color=black](1){} -- (-3,1) node[circle,fill,inner sep=1pt, color=black](1){};

\node at (-5,4) {$\Omega_3$};

\draw (-2,1) node[circle,fill,inner sep=1.5pt, color=black](1){};
\draw (-1.5,1) node[circle,fill,inner sep=1.5pt, color=black](1){};
\draw (-1,1) node[circle,fill,inner sep=1.5pt, color=black](1){};
\draw (-0.5,1) node[circle,fill,inner sep=1.5pt, color=black](1){};
\draw (0,1) node[circle,fill,inner sep=1.5pt, color=black](1){};

\draw (1,1) node[circle,fill,inner sep=1pt, color=black](1){} -- (2,2) node[circle,fill,inner sep=1pt, color=black](1){}-- (3,1) node[circle,fill,inner sep=1pt, color=black](1){}-- (2,0) node[circle,fill,inner sep=1pt, color=black](1){} -- (1,1) node[circle,fill,inner sep=1pt, color=black](1){}; 

\draw (2,2) node[circle,fill,inner sep=1pt, color=black](1){} -- (4,1) node[circle,fill,inner sep=1pt, color=black](1){}-- (2,0) node[circle,fill,inner sep=1pt, color=black](1){};

\draw (2,2) node[circle,fill,inner sep=1pt, color=black](1){} -- (5,1) node[circle,fill,inner sep=1pt, color=black](1){}-- (2,0) node[circle,fill,inner sep=1pt, color=black](1){};

\draw (2,2) node[circle,fill,inner sep=1pt, color=black](1){} -- (9,1) node[circle,fill,inner sep=1pt, color=black](1){}-- (2,0) node[circle,fill,inner sep=1pt, color=black](1){};

\draw (2,2) node[circle,fill,inner sep=1pt, color=black](1){} -- (8,1) node[circle,fill,inner sep=1pt, color=black](1){}-- (2,0) node[circle,fill,inner sep=1pt, color=black](1){};

\draw (4,1) node[circle,fill,inner sep=1pt, color=black](1){} -- (3,-1) node[circle,fill,inner sep=1pt, color=black](1){}-- (2,-1) node[circle,fill,inner sep=1pt, color=black](1){};

\draw (5,1) node[circle,fill,inner sep=1pt, color=black](1){} -- (4,-1) node[circle,fill,inner sep=1pt, color=black](1){}-- (3,-1) node[circle,fill,inner sep=1pt, color=black](1){};

\draw (8,1) node[circle,fill,inner sep=1pt, color=black](1){} -- (7,-1) node[circle,fill,inner sep=1pt, color=black](1){}-- (6,-1) node[circle,fill,inner sep=1pt, color=black](1){};

\draw[densely dotted] (7,1) -- (6.5,0);

\draw (6.5,0) -- (6,-1);

\draw[densely dotted] (4,-1) node[circle,fill,inner sep=1pt, color=black](1){}-- (6,-1) node[circle,fill,inner sep=1pt, color=black](1){};

\draw[densely dotted] (5.5,1) -- (7,1) node[circle,fill,inner sep=1pt, color=black](1){};

\draw (1,1) node[circle,fill,inner sep=1pt, color=black](1){} -- (2,-1) node[circle,fill,inner sep=1pt, color=black](1){}-- (3,1) node[circle,fill,inner sep=1pt, color=black](1){};

\node at (2,-0.25) {$b_0$};

\node at (2,2.25) {$a_0$};

\node at (0.75,1) {$b_1$};

\node at (3.3,1) {$a_1$};

\node at (4.3,1) {$a_2$};

\node at (5.3,1) {$a_3$};

\node at (8.4,0.6) {$a_{d-1}$};

\node at (9.4,1) {$a_d$};

\node at (2,-1.4) {$b_2$};

\node at (3,-1.4) {$b_3$};

\node at (4,-1.4) {$b_4$};

\node at (6,-1.4) {$b_{d-1}$};

\node at (7,-1.4) {$b_d$};

\node at (1,3.4) {$c_1$};

\node at (9,3.4) {$c_2$};

\draw (1,1) node[circle,fill,inner sep=1pt, color=black](1){} -- (1,3) node[circle,fill,inner sep=1pt, color=black](1){}-- (9,3) node[circle,fill,inner sep=1pt, color=black](1){} -- (9,1) node[circle,fill,inner sep=1pt, color=black](1){};

\node at (5,4) {$\Omega_d$};

\end{tikzpicture}

\caption{}
\label{afirst}
\end{figure}

The following theorem shows that the divergence spectrum of a peripheral subgroup is a part of the divergence spectrum of the whole relatively hyperbolic group. The proof of the theorem follows directly from Proposition \ref{p1} and Proposition \ref{p2}.
\begin{thm}
\label{tm1}
Let $(G,\PP)$ be a finitely generated relatively hyperbolic group. Let $P$ be a peripheral subgroup in $\PP$ such that the divergence spectrum of $P$ only contains subexponential functions. Then the divergence spectrum of $P$ is a subset of the divergence spectrum of $G$. 
\end{thm}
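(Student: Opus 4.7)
The plan is to derive the theorem directly from Propositions \ref{p1} and \ref{p2}: the first produces, for every Morse geodesic in $\Gamma(P,T)$, a Morse geodesic in $\Gamma(G,S)$ at finite Hausdorff distance, and the second says that under the subexponential assumption the two geodesics have equivalent lower divergence functions. Combining these two statements feeds every element of $S_P$ into $S_G$.

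First I would set up the Cayley graphs. Since $(G,\PP)$ is relatively hyperbolic, I may choose a finite generating set $S$ of $G$ such that for each $P \in \PP$ the intersection $T = S \cap P$ generates $P$; by Lemma \ref{lemma2} the inclusion $\Gamma(P,T) \hookrightarrow \Gamma(G,S)$ is a quasi-isometric embedding. The divergence spectrum is a quasi-isometry invariant, so computing $S_G$ and $S_P$ with respect to these Cayley graphs is harmless.

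Next, let $f \in S_P$. By definition there is an $M$-Morse bi-infinite geodesic $\beta$ in $\Gamma(P,T)$ with $ldiv_\beta \sim f$. Since every function in $S_P$ is subexponential by hypothesis, $ldiv_\beta$ is subexponential. Apply Proposition \ref{p1} to produce a Morse bi-infinite geodesic $\alpha$ in $\Gamma(G,S)$ at finite Hausdorff distance from $\beta$; this $\alpha$ is the candidate witness that $f \in S_G$. Now Proposition \ref{p2} applies to the pair $(\alpha,\beta)$: the subexponentiality of $ldiv_\beta$ is exactly the hypothesis needed, so $ldiv_\alpha \sim ldiv_\beta \sim f$. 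Since $\alpha$ is Morse in $\Gamma(G,S)$ and its lower divergence is equivalent to $f$, we conclude $f \in S_G$, proving $S_P \subseteq S_G$.

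The main obstacles are all packed into the two preceding propositions, so for the theorem itself there is essentially no technical difficulty — one just has to be careful that the finiteness of the Hausdorff distance output by Proposition \ref{p1} is what Proposition \ref{p2} requires as input, and that the subexponential hypothesis on $S_P$ transfers to the particular $\beta$ we chose (which is automatic since $ldiv_\beta \sim f \in S_P$). The only small bookkeeping issue is ensuring a generating set $S$ with the $S \cap P$ property exists, and this is standard for relatively hyperbolic groups.
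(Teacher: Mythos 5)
Your proposal is correct and follows exactly the paper's route: the paper states that Theorem \ref{tm1} ``follows directly from Proposition \ref{p1} and Proposition \ref{p2},'' which is precisely the combination you spell out (lift $\beta$ to a Morse geodesic $\alpha$ in $\Gamma(G,S)$ at finite Hausdorff distance, then use subexponentiality of $ldiv_\beta$ to get $ldiv_\alpha \sim ldiv_\beta \sim f$). Your extra bookkeeping about choosing $S$ with $S\cap P$ generating $P$ and about subexponentiality being preserved under $\sim$ is accurate and only makes the argument more explicit than the paper's one-line justification.
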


\section{Application to right-angled Coxeter groups}
\label{spora}
In this section, we construct an infinite collection of right-angled Coxeter groups which are also relatively hyperbolic groups. We apply the properties of the divergence spectra of relatively hyperbolic groups to show that all groups in this collection are not pairwise quasi-isometric.
%\begin{defn}
%Recall that the complementary graph of $\Gamma$, denoted by $\Gamma^c$, is the graph with the same vertex set as $\Gamma$, in which two vertices are connected by an edge if and only if they are not connected by an edge in $\Gamma$. If $\Gamma$ is not a join, $\Gamma^c$ is connected. In this case, we can choose a loop in 􀀀c which visits each vertex (possibly with repetitions). Choose a vertex $a_1$ on this loop, and let $w = a_1a_2\cdots a_k$ be the word formed by the vertices of this loop in the order encountered along the loop, where $a_k$ is the last vertex encountered before the loop closes up at its starting point $a_1$. We assume that the loop is never stationary at a vertex, so that $a_i \neq a_{i+1}$ for any $i$. Then
%$w$ is a word in the generators of $\Gamma$ such that no two consecutive generators commute, and $a_k$ does not commute with $a_1$.
%\end{defn}

We first recall definitions and results concerning right-angled Coxeter groups and their associated Davis complexes.
\begin{defn}
Given a finite, simplicial graph $\Gamma$, the associated right-angled Coxeter group $G_{\Gamma}$ has generating set $S$ the vertices of $\Gamma$, and relations $s^2 = 1$ for all $s$ in $S$ and $st = ts$ whenever $s$ and $t$ are adjacent vertices. If $T\subset S$, then the subgroup $H_T$ of $G_{\Gamma}$ generated by $T$ is a right-angled Coxeter group which associates to the graph $\Gamma_1$, where $\Gamma_1$ is a full subgraph of $\Gamma$ with the set of vertices $T$. We call $H_T$ a \emph{special subgroup}.

%Suppose $T = \{s, t, u, v\} \subset S$ is such that $s$, $t$, $u$ and $v$ are, in cyclic order, the vertices of an embedded four-cycle in $\Gamma$. Then \[H_T = H_{\{u,s\}}\times H_{\{v,t\}}\cong D_{\infty}\times D_{\infty}.\]

%Now suppose $T_1$ and $T_2$ are distinct subsets of $S$ such that $T_1 \cap T_2 =~ \{s, t, u\}$, with $s$ and $u$ both adjacent to $t$ but $s$ and $u$ not adjacent. Then 
%\[H_{T_1\cap T_2}=H_{\{s,u\}}\times H_{\{t\}} \cong D_{\infty}\times C_2.\]
%and 
%\[H_{T_1\cup T_2}=H_{T_1}*_{H_{T_1\cap T_2}}H_{T_2} \cong H_{T_1}*_{D_{\infty}\times C_2}H_{T_2}.\]

%A word $w$ in $S$ is said to be \emph{reduced} if the element $w$ cannot be represented by any shorter word. A word $w$ in the generating set $S$ of a right-angled Coxeter group is reduced if and only if it cannot be shortened by a sequence of operations of either deleting a subword of the form $s^2$, with $s$ in $S$, or replacing a subword $st$ by $ts$ when $s$ and $t$ commute (see Theorem 3.4.2, \cite{MR2360474}).
\end{defn}

\begin{figure}
\begin{tikzpicture}[scale=0.7]

\draw (-14,1) node[circle,fill,inner sep=1pt, color=black](1){} -- (-13,2) node[circle,fill,inner sep=1pt, color=black](1){}-- (-12,1) node[circle,fill,inner sep=1pt, color=black](1){}-- (-13,0) node[circle,fill,inner sep=1pt, color=black](1){} -- (-14,1) node[circle,fill,inner sep=1pt, color=black](1){}; 

%\draw (-14,1) node[circle,fill,inner sep=1pt, color=black](1){} -- (-14,3) node[circle,fill,inner sep=1pt, color=black](1){} -- (-12,3) node[circle,fill,inner sep=1pt, color=black](1){} --(-12,1) node[circle,fill,inner sep=1pt, color=black](1){};

\node at (-13,3) {$\Gamma_1$};

\draw (-11,1) node[circle,fill,inner sep=1pt, color=black](1){} -- (-10,2) node[circle,fill,inner sep=1pt, color=black](1){}-- (-9,1) node[circle,fill,inner sep=1pt, color=black](1){}-- (-10,0) node[circle,fill,inner sep=1pt, color=black](1){} -- (-11,1) node[circle,fill,inner sep=1pt, color=black](1){}; 

\draw (-10,2) node[circle,fill,inner sep=1pt, color=black](1){} -- (-8,1) node[circle,fill,inner sep=1pt, color=black](1){}-- (-10,0) node[circle,fill,inner sep=1pt, color=black](1){};

\draw (-11,1) node[circle,fill,inner sep=1pt, color=black](1){} -- (-10,-1) node[circle,fill,inner sep=1pt, color=black](1){}-- (-9,1) node[circle,fill,inner sep=1pt, color=black](1){};

%\draw (-11,1) node[circle,fill,inner sep=1pt, color=black](1){} -- (-11,3) node[circle,fill,inner sep=1pt, color=black](1){}-- (-8,3) node[circle,fill,inner sep=1pt, color=black](1){} -- (-8,1) node[circle,fill,inner sep=1pt, color=black](1){};

\node at (-9.5,3) {$\Gamma_2$};

\draw (-7,1) node[circle,fill,inner sep=1pt, color=black](1){} -- (-6,2) node[circle,fill,inner sep=1pt, color=black](1){}-- (-5,1) node[circle,fill,inner sep=1pt, color=black](1){}-- (-6,0) node[circle,fill,inner sep=1pt, color=black](1){} -- (-7,1) node[circle,fill,inner sep=1pt, color=black](1){}; 

\draw (-6,2) node[circle,fill,inner sep=1pt, color=black](1){} -- (-4,1) node[circle,fill,inner sep=1pt, color=black](1){}-- (-6,0) node[circle,fill,inner sep=1pt, color=black](1){};

\draw (-6,2) node[circle,fill,inner sep=1pt, color=black](1){} -- (-3,1) node[circle,fill,inner sep=1pt, color=black](1){}-- (-6,0) node[circle,fill,inner sep=1pt, color=black](1){};

\draw (-4,1) node[circle,fill,inner sep=1pt, color=black](1){} -- (-5,-1) node[circle,fill,inner sep=1pt, color=black](1){}-- (-6,-1) node[circle,fill,inner sep=1pt, color=black](1){};

\draw (-7,1) node[circle,fill,inner sep=1pt, color=black](1){} -- (-6,-1) node[circle,fill,inner sep=1pt, color=black](1){}-- (-5,1) node[circle,fill,inner sep=1pt, color=black](1){};

%\draw (-7,1) node[circle,fill,inner sep=1pt, color=black](1){} -- (-7,3) node[circle,fill,inner sep=1pt, color=black](1){}-- (-3,3) node[circle,fill,inner sep=1pt, color=black](1){} -- (-3,1) node[circle,fill,inner sep=1pt, color=black](1){};

\node at (-5,3) {$\Gamma_3$};

\draw (-2,1) node[circle,fill,inner sep=1.5pt, color=black](1){};
\draw (-1.5,1) node[circle,fill,inner sep=1.5pt, color=black](1){};
\draw (-1,1) node[circle,fill,inner sep=1.5pt, color=black](1){};
\draw (-0.5,1) node[circle,fill,inner sep=1.5pt, color=black](1){};
\draw (0,1) node[circle,fill,inner sep=1.5pt, color=black](1){};

\draw (1,1) node[circle,fill,inner sep=1pt, color=black](1){} -- (2,2) node[circle,fill,inner sep=1pt, color=black](1){}-- (3,1) node[circle,fill,inner sep=1pt, color=black](1){}-- (2,0) node[circle,fill,inner sep=1pt, color=black](1){} -- (1,1) node[circle,fill,inner sep=1pt, color=black](1){}; 

\draw (2,2) node[circle,fill,inner sep=1pt, color=black](1){} -- (4,1) node[circle,fill,inner sep=1pt, color=black](1){}-- (2,0) node[circle,fill,inner sep=1pt, color=black](1){};

\draw (2,2) node[circle,fill,inner sep=1pt, color=black](1){} -- (5,1) node[circle,fill,inner sep=1pt, color=black](1){}-- (2,0) node[circle,fill,inner sep=1pt, color=black](1){};

\draw (2,2) node[circle,fill,inner sep=1pt, color=black](1){} -- (9,1) node[circle,fill,inner sep=1pt, color=black](1){}-- (2,0) node[circle,fill,inner sep=1pt, color=black](1){};

\draw (2,2) node[circle,fill,inner sep=1pt, color=black](1){} -- (8,1) node[circle,fill,inner sep=1pt, color=black](1){}-- (2,0) node[circle,fill,inner sep=1pt, color=black](1){};

\draw (4,1) node[circle,fill,inner sep=1pt, color=black](1){} -- (3,-1) node[circle,fill,inner sep=1pt, color=black](1){}-- (2,-1) node[circle,fill,inner sep=1pt, color=black](1){};

\draw (5,1) node[circle,fill,inner sep=1pt, color=black](1){} -- (4,-1) node[circle,fill,inner sep=1pt, color=black](1){}-- (3,-1) node[circle,fill,inner sep=1pt, color=black](1){};

\draw (8,1) node[circle,fill,inner sep=1pt, color=black](1){} -- (7,-1) node[circle,fill,inner sep=1pt, color=black](1){}-- (6,-1) node[circle,fill,inner sep=1pt, color=black](1){};

\draw[densely dotted] (7,1) -- (6.5,0);

\draw (6.5,0) -- (6,-1);

\draw[densely dotted] (4,-1) node[circle,fill,inner sep=1pt, color=black](1){}-- (6,-1) node[circle,fill,inner sep=1pt, color=black](1){};

\draw[densely dotted] (5.5,1) -- (7,1) node[circle,fill,inner sep=1pt, color=black](1){};

\draw (1,1) node[circle,fill,inner sep=1pt, color=black](1){} -- (2,-1) node[circle,fill,inner sep=1pt, color=black](1){}-- (3,1) node[circle,fill,inner sep=1pt, color=black](1){};

\node at (2,-0.25) {$b_0$};

\node at (2,2.25) {$a_0$};

\node at (0.75,1) {$b_1$};

\node at (3.3,1) {$a_1$};

\node at (4.3,1) {$a_2$};

\node at (5.3,1) {$a_3$};

\node at (8.4,0.6) {$a_{d-1}$};

\node at (9.4,1) {$a_d$};

\node at (2,-1.4) {$b_2$};

\node at (3,-1.4) {$b_3$};

\node at (4,-1.4) {$b_4$};

\node at (6,-1.4) {$b_{d-1}$};

\node at (7,-1.4) {$b_d$};

%\node at (1,3.4) {$c_1$};

%\node at (9,3.4) {$c_2$};

%\draw (1,1) node[circle,fill,inner sep=1pt, color=black](1){} -- (1,3) node[circle,fill,inner sep=1pt, color=black](1){}-- (9,3) node[circle,fill,inner sep=1pt, color=black](1){} -- (9,1) node[circle,fill,inner sep=1pt, color=black](1){};

\node at (5,3) {$\Gamma_d$};

\end{tikzpicture}

\caption{}
\label{asecond}
\end{figure}

\begin{defn}
Given a nontrivial, connected, finite, simplicial, triangle-free graph $\Gamma$ with the set $S$ of vertices, we may define \emph{the Davis complex} $\Sigma=\Sigma_{\Gamma}$ to be the Cayley 2--complex for the presentation of the Coxeter group $G_{\Gamma}$, in which all disks bounded by a loop with label $s^2$ for $s$ in $S$ have been shrunk to an unoriented edge with label $s$. Then the vertex set of $\Sigma$ is $G_{\Gamma}$ and the 1-skeleton of $\Sigma$ is the Cayley graph $C_{\Gamma}$ of $G_{\Gamma}$ with respect to the generating set $S$. Since all relators in this presentation other than $s^2 = 1$ are of the form $stst = 1$, $\Sigma$ is a square complex. %The Davis complex $\Sigma_{\Gamma}$ is a $\CAT(0)$ space and the group $G_{\Gamma}$ acts properly and cocompactly on the Davis complex $\Sigma_{\Gamma}$ (see \cite{MR2360474}).
\end{defn}

\begin{rem}
\label{ra}
The Davis complex $\Sigma_{\Gamma}$ is a $\CAT(0)$ space and the group $G_{\Gamma}$ acts properly and cocompactly on the Davis complex $\Sigma_{\Gamma}$ (see \cite{MR2360474}). Moreover, $\Sigma_{\Gamma}$ and $C_{\Gamma}$ have the geodesic extension property. 

Let $H_T$ be a special subgroup of $G_{\Gamma}$. Then the Cayley graph of $H_T$ (with respect to the generating set $T$) embeds isometrically in $C_{\Gamma}\subset \Sigma_{\Gamma}$. 
\end{rem}

%\begin{thm}
%\label{m1}
%Let $G_1$ and $G_2$ be two right-angled Artin groups. Assume that $G_1$ is relatively hyperbolic to some collection $\PP_1$ of its special subgroups and $G_2$ is relatively hyperbolic to some collection $\PP_2$. Let $f_1$ be a maximum function over divergence of all subgroups in $\PP_1$ and $f_2$ a maximum function over divergence of all subgroups in $\PP_2$. Assume that $f_1$ lies in the of some subgroup in $\PP_1$, $f_2$ lies in the of some subgroup in $\PP_2$ and the are both subexponential. Then $G_1$ and $G_2$ have different. Therefore, $G_1$ and $G_2$ are not quasi-isometric.
%\end{thm}

%\begin{proof}
%By Theorem \ref{tm1} and Remark \ref{ra}, $f_2$ is a function in the of $G_2$. Also, $f_2$ is not a function in the of $G_1$ by Theorem \ref{tm2} and Remark \ref{ra}. Thus, $G_1$ and $G_2$ haves different.
%\end{proof}

%We now give some examples on which Theorem \ref{m1} applies.

For each $d\geq 2$, let $\Omega_d$ be the graph in Figure \ref{afirst} and $\Gamma_d$ be the full subgraph graph of $\Omega_d$ in Figure \ref{asecond}. We remark that the graphs $\Gamma_d$ were introduced by Dani-Thomas \cite{MR3314816} to study divergence of right-angled Coxeter groups. In \cite{MR3314816}, Dani-Thomas proved that the divergence of $G_{\Gamma_d}$ (or $C_{\Gamma_d}$) is a polynomial $r^d$. We observe that $G_{\Gamma_d}$ is a special subgroup of $G_{\Omega_d}$ and $(G_{\Omega_d}, G_{\Gamma_d})$ is relatively hyperbolic (see \cite[Theorem A']{MR3450952}). The polynomial $r^d$ is a function in the divergence spectrum of $G_{\Gamma_d}$ due to the following proposition. %\begin{rem}
%\label{r1}
%Let $\alpha$ and $\beta$ be two geodesic rays in a $\CAT(0)$ space with the same initial point $x_0$. Assume that $\Div_{\alpha,\beta}(r)\geq f(r)$. Using the fact that projections do not increase distances, we can show that if $\eta$ is a path outside $B(x_0,r)$ connecting two points on $\alpha$ and $\beta$, then $\ell(\eta)\geq f(r)$. 
%\end{rem}

%\begin{lem}
%\label{b5} 
%For each $d\geq 2$, there is a polynomial $f_d$ of degree $d$ such that the following hold. Let $C_{\Gamma_d}$ be the 1-skeleton of $\Sigma_{\Gamma_d}$. Let $\alpha_d$ be a bi-infinite geodesic with the initial point $e$ and labeled by $\cdots b_da_db_da_d\cdots$ or $\cdots a_db_da_db_d\cdots$. Let $\beta_d$ be a geodesic ray with the initial point $e$ and labeled by $b_{d-1}a_{d-1}b_{d-1}a_{d-1}\cdots$ or $a_{d-1}b_{d-1}a_{d-1}b_{d-1}\cdots$. Let $\gamma$ be an arbitrary path that lies outside the open ball with radius $r$ about $e$ and connects two points on $\alpha_d$ and $\beta_d$. Then, the length of $\gamma$ is bounded below by $f_d(r)$.
%\end{lem}
%The reader can see the proof of the above lemma from the proof of Proposition 5.3 in \cite{MR3314816} and Remark \ref{r1}. 

\begin{prop}[Proposition 3.19 in \cite{MR3451473}] 
\label{propo}
For each $d\geq 2$, let $C_{\Gamma_d}$ be the 1-skeleton of $\Sigma_{\Gamma_d}$. Let $\alpha_d$ be a bi-infinite geodesic containing $e$ and labeled by $a_db_da_db_d\cdots$. Then the lower divergence of $\alpha_d$ is equivalent to the polynomial of degree $d$. Thus, the polynomial $r^d$ is a function in the divergence spectrum of $G_{\Gamma_d}$.
\end{prop}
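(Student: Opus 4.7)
The plan is to prove $ldiv_{\alpha_d} \sim r^d$ by establishing matching upper and lower bounds, exploiting the fact that $\alpha_d$ is a periodic geodesic and that Dani--Thomas have already determined the divergence of $G_{\Gamma_d}$.

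First, for the upper bound, I would invoke Remarks \ref{ra} and \ref{rma1}: since $C_{\Gamma_d}$ is one-ended and has the geodesic extension property, the divergence $Div_{C_{\Gamma_d}}$ can be regarded as a single function up to $\sim$, and it upper-bounds the divergence of every pair of geodesic rays sharing a basepoint. Applied to the two rays of $\alpha_d$ emanating from $e$, this gives $Div_{\alpha_d}\preceq Div_{C_{\Gamma_d}}\sim r^d$ (the last equivalence being the Dani--Thomas theorem). Since by definition $ldiv_{\alpha_d}(r)\leq \rho_{\alpha_d}(r,0)\leq Div_{\alpha_d}(r)$, the upper bound $ldiv_{\alpha_d}\preceq r^d$ follows at once.

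Next, for the lower bound, I would exploit periodicity. The element $w=a_d b_d$ has infinite order in $G_{\Gamma_d}$ (since $a_d$ and $b_d$ are non-adjacent in $\Gamma_d$) and, under the chosen labeling of $\alpha_d$ by $a_d b_d a_d b_d\cdots$, acts on $\alpha_d$ by translation. Thus $\alpha_d$ is the axis of $w$, so it is a periodic bi-infinite geodesic in the sense of the remark following the definition of $Div_{\gamma}$. By that remark, $ldiv_{\alpha_d}\sim Div_{\alpha_d}$, and it suffices to prove $Div_{\alpha_d}\succeq r^d$.

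The main task, and the hard part, is therefore to show that the particular pair of rays of $\alpha_d$ actually realizes the polynomial divergence of $C_{\Gamma_d}$. I would follow the strategy used by Dani--Thomas to produce polynomial lower bounds: induct on $d$ using the recursive structure of $\Gamma_d$ (each $\Gamma_{d}$ contains an isometrically embedded copy of $\Gamma_{d-1}$ via the special subgroup $\langle a_0,b_0,a_1,b_1,\ldots,a_{d-1},b_{d-1}\rangle$, cf.\ Remark \ref{ra}) and show that any path from $\alpha_d(-r)$ to $\alpha_d(r)$ avoiding $B(e,r)$ must successively ``peel off'' at each level $i\leq d$, picking up a linear factor from each level and yielding a total length of order $r^d$. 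The key obstacle is to carry out this inductive decomposition carefully: one must argue that a detour path cannot shortcut between levels and that the restriction of the detour to each level-$i$ subcomplex is itself a valid detour for a pair of rays of the form $(a_ib_i)^\infty$, thereby invoking the inductive hypothesis. Once that is in hand, combining the two bounds yields $ldiv_{\alpha_d}\sim r^d$ and hence that $r^d$ belongs to the divergence spectrum of $G_{\Gamma_d}$ (using that $\alpha_d$, being the axis of the hyperbolic isometry $w$, is automatically Morse in the CAT(0) space $\Sigma_{\Gamma_d}$).
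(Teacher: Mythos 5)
Your two-sided strategy is the natural one and is consistent with how this paper actually handles the statement: Proposition \ref{propo} is not proved here at all but is quoted from \cite{MR3451473}, whose proof the author explicitly describes as based mostly on the work of Dani--Thomas in \cite{MR3314816}. Your upper bound $ldiv_{\alpha_d}\preceq Div_{\alpha_d}\preceq Div_{C_{\Gamma_d}}\sim r^d$ (via Remark \ref{rma1} and the Dani--Thomas computation) and your use of the periodicity of $\alpha_d$ under $w=a_db_d$ to get $ldiv_{\alpha_d}\sim Div_{\alpha_d}$ are both correct, and they are exactly the reductions the remarks in Sections \ref{prelim} and \ref{conex} are set up to provide. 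However, there is a genuine error in your last step: being the axis of a hyperbolic isometry of a $\CAT(0)$ space does \emph{not} imply Morse. Every geodesic in $\E^2$ is the axis of a translation, and the axis of a central element of $F_2\times\Z$ is non-Morse; more to the point, the analogous geodesic $\alpha_1$ labeled by $a_1b_1a_1b_1\cdots$ in $C_{\Gamma_1}$ is an axis of $a_1b_1$ but is not Morse (which is why the proposition starts at $d\geq 2$). Without Morseness the final sentence of the proposition fails, since the divergence spectrum records lower divergences of \emph{Morse} geodesics only. The correct route is the Charney--Sultan characterization in \cite{MR3339446}: in a $\CAT(0)$ space a bi-infinite geodesic is Morse if and only if its lower divergence is (at least) quadratic, so once $ldiv_{\alpha_d}\sim r^d$ with $d\geq 2$ is established, Morseness follows (after transferring between $C_{\Gamma_d}$ and $\Sigma_{\Gamma_d}$, e.g.\ via Lemma \ref{b1}).

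The second issue is that essentially all of the mathematical content lies in the lower bound $Div_{\alpha_d}\succeq r^d$ for this \emph{specific} pair of rays, and your proposal only gestures at it (``peel off at each level, picking up a linear factor''). That inductive decomposition is indeed what Dani--Thomas carry out --- their lower bound for $Div_{C_{\Gamma_d}}$ is obtained precisely by estimating the divergence of the rays labeled by $(a_db_d)^{\infty}$, which is why \cite{MR3451473} can quote it --- but as written your argument neither executes the induction nor verifies the key claim that a detour cannot shortcut between levels. Either cite the precise Dani--Thomas statement for this pair of rays, or supply the induction; at present the lower bound is asserted rather than proved.
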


We remark that the above proposition was proved in \cite{MR3451473} but the proof is based mostly on the work of Dani-Thomas in \cite{MR3314816}. Before stating the main theorem of this section, we remind the reader that each group $G_{\Omega_d}$ is finitely presented, one-ended, and relatively hyperbolic. Therefore, the group divergence of each group $G_{\Omega_d}$ is exactly exponential (see \cite{Sisto}). However, we will show that two groups $G_{\Omega_d}$ and $G_{\Omega_{d'}}$ have different divergence spectra for each $d\neq d'$ (see the following theorem).

\begin{thm}
Let $d$ and $d'$ be two different positive integers. Then $G_{\Omega_d}$ and $G_{\Omega_{d'}}$ have different divergence spectra. Therefore, they are not quasi-isometric.
\end{thm}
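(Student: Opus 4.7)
Without loss of generality assume $d<d'$. The plan is to exhibit a function, namely the polynomial $r^{d'}$, which lies in the divergence spectrum $S_{G_{\Omega_{d'}}}$ but not in $S_{G_{\Omega_d}}$, and then invoke the quasi-isometry invariance of the divergence spectrum proved in Section~\ref{spectrum}. The two ingredients are the embedding theorem (Theorem~\ref{tm1}) and the gap theorem (Theorem~\ref{tm2}) for divergence spectra of relatively hyperbolic groups; the setup in both cases is that $(G_{\Omega_k},\{G_{\Gamma_k}\})$ is relatively hyperbolic, with $G_{\Gamma_k}$ a special (hence canonically embedded) subgroup.

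For the positive side, I would take the generating set $S$ of $G_{\Omega_{d'}}$ to be the vertex set of $\Omega_{d'}$; then $T=S\cap G_{\Gamma_{d'}}$ is the vertex set of $\Gamma_{d'}$ and generates the peripheral subgroup $G_{\Gamma_{d'}}$. By Dani--Thomas, the divergence of $G_{\Gamma_{d'}}$ is equivalent to the polynomial $r^{d'}$, so in particular $S_{G_{\Gamma_{d'}}}$ consists entirely of subexponential functions. Hence Theorem~\ref{tm1} applies and gives $S_{G_{\Gamma_{d'}}}\subset S_{G_{\Omega_{d'}}}$. Proposition~\ref{propo} then exhibits an explicit bi-infinite geodesic (labeled by $a_{d'}b_{d'}a_{d'}b_{d'}\cdots$) whose lower divergence is equivalent to $r^{d'}$, so $r^{d'}\in S_{G_{\Omega_{d'}}}$.

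For the negative side, I would apply Theorem~\ref{tm2} to $(G_{\Omega_d},\{G_{\Gamma_d}\})$ with the analogous generating set. The hypotheses need checking: $S\cap G_{\Gamma_d}$ generates $G_{\Gamma_d}$ (immediate), the Cayley graph $\Gamma(G_{\Gamma_d},S\cap G_{\Gamma_d})$ has the geodesic extension property (Remark~\ref{ra}), and is one-ended (which follows from Dani--Thomas, since a group whose divergence is a genuine polynomial of degree $\geq 2$ is one-ended). The gap theorem then forces every function in $S_{G_{\Omega_d}}$ to be either at least exponential or dominated by $f=Div_{\Gamma(G_{\Gamma_d},S\cap G_{\Gamma_d}),e}\sim r^d$. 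Since $r^{d'}$ is polynomial, hence not at least exponential, the only way $r^{d'}$ could lie in $S_{G_{\Omega_d}}$ is if $r^{d'}\preceq r^d$; but $d'>d$ rules this out, so $r^{d'}\notin S_{G_{\Omega_d}}$.

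Combining the two steps yields $S_{G_{\Omega_d}}\neq S_{G_{\Omega_{d'}}}$, and quasi-isometry invariance of the divergence spectrum immediately implies $G_{\Omega_d}$ and $G_{\Omega_{d'}}$ are not quasi-isometric. The only nontrivial obstacle in this plan is verifying the hypotheses of Theorems~\ref{tm1} and \ref{tm2} for the specific graphs $\Omega_k$, i.e.\ confirming the relatively hyperbolic structure of $(G_{\Omega_k},\{G_{\Gamma_k}\})$ (invoking \cite{MR3450952}) and the one-endedness plus geodesic extension property of the peripheral Cayley graphs; once these are in place, the argument is essentially a three-line inclusion--exclusion on divergence spectra.
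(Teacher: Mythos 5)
Your proposal is correct and follows essentially the same route as the paper: the paper likewise combines Theorem~\ref{tm1} with Proposition~\ref{propo} to place the larger-degree polynomial in one spectrum, and Theorem~\ref{tm2} with Remark~\ref{ra} to exclude it from the other, concluding by quasi-isometry invariance. The only cosmetic difference is the choice of which index is larger (the paper takes $d'<d$ and distinguishes via $r^d$), and your extra care in checking one-endedness and the geodesic extension property of the peripheral Cayley graphs is a welcome, if implicit in the paper, addition.
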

\begin{proof}
We assume that $d'<d$. It is obvious that $r^d$ and $r^{d'}$ are both subexponential. Also, $r^{d'}$ is strictly dominated by $r^d$. By Theorem \ref{tm1} and Proposition \ref{propo}, $r^d$ is a function in the divergence spectrum of $G_{\Omega_d}$. By the Theorem \ref{tm2}, Remark \ref{ra}, the divergence spectrum of $G_{\Omega_{d'}}$ only contains functions which are dominated by $r^{d'}$ or at least exponential. Thus, the divergence spectrum of $G_{\Omega_{d'}}$ does not contain $r^d$. Then $G_{\Omega_d}$ and $G_{\Omega_{d'}}$ have different divergence spectra. Therefore, they are not quasi-isometric. 
\end{proof}

\begin{rem}
\label{aw}
We remark that there is an alternate (shorter) way to classify groups $G_{\Omega_d}$ without computing their divergence spectra. Dani-Thomas \cite{MR3314816} proved that the divergence of each peripheral subgroup $G_{\Gamma_d}$ is polynomials of degree $d$. Therefore, each group $G_{\Gamma_d}$ is not relatively hyperbolic with respect to any collection of proper subgroups by Theorem 1.3 \cite{Sisto}. Moreover, two groups $G_{\Gamma_{d_1}}$ and $G_{\Gamma_{d_2}}$ ($d_1\neq d_2$) are not quasi-isometric because they have different divergence functions. Therefore, two groups $G_{\Omega_{d_1}}$ and $G_{\Omega_{d_2}}$ ($d_1\neq d_2$) are also not quasi-isometric by Theorem 4.1 \cite{MR2501302}. 
\end{rem}

\section{Morse boundaries}
\label{mbdr}
In this section, we review the concept of Morse boundary in \cite{MC}. We will show that for each finitely generated relatively hyperbolic group $(G,\PP)$, the inclusion map of each peripheral subgroup $P$ induces a topological embedding of the Morse boundary of $P$ into the Morse boundary of $G$. 

\begin{defn}
Let $\mathcal{M}$ be the set of all Morse gauges. We put a partial ordering on $\mathcal{M}$ so that for two Morse gauges $N, N' \in \mathcal{M}$, we say $N \leq N'$ if and only if $N(K,L) \leq N'(K, L)$ for all $K, L$.
\end{defn}

\begin{defn}
Let $X$ be a proper geodesic space. The \emph{Morse boundary} of $X$ with basepoint p, denoted $\partial_M {X_p}$, is defined to be the set of all equivalence classes of Morse geodesic rays in $X$ with initial point $p$, where two rays $\alpha,\alpha'\!:[0,\infty)\rightarrow X$ are equivalent if there exists a constant $K$ such that $d_X\bigl(\alpha(t), \alpha′(t)\bigr) < K$ for all $t > 0$. We denote the equivalence class of a ray $\alpha$ in $\partial_M {X_p}$ by $[\alpha]$.

On $\partial_M {X_p}$, we build a topology as follows :

Consider the subset of the Morse boundary 
\[\partial_M^N {X_p}=\set{x}{\text{The class $x$ contains an $N$--Morse geodesic ray $\alpha$ with $\alpha(0)=p$}}.\]

We define convergence in $\partial_M^N {X_p}$ by: $x_n \rightarrow x$ as $n \rightarrow \infty$ if and only if there exists $N$--Morse geodesic rays $\alpha_n$ with $\alpha_n(0) = p$ and $[\alpha_n] = x_n$ such that every subsequence of ${\alpha_n}$ contains a subsequence that converges uniformly on compact sets to a geodesic ray $\alpha$ with $[\alpha]=x$. The closed subsets $F$ in $\partial_M {X_p}$ are those satisfying the condition
\[\bigl[\{x_n\} \subset F \text{ and } x_n \rightarrow x\bigr] \implies x\in F.\]
We equip the Morse boundary $\partial_M {X_p}$ with the direct limit topology $$\partial_M {X_p}=\lim_{\overrightarrow{\mathcal{M}}} \partial_M^N {X_p}.$$
\end{defn}

\begin{rem}
The direct limit topology on $\partial_M {X_p}$ is independent of basepoint $p$ (see Proposition 3.5 in \cite{MC}). Therefore, we can assume the
basepoint is fixed, suppress it from the notation and write $\partial_M {X}$. Moreover, the Morse boundary is a quasi-isometry invariant (see Proposition 3.7 \cite{MC}). Therefore, we define the \emph{Morse boundary} of a finitely generated group $G$, denoted $\partial_M G$, as the Morse boundary of its Cayley graph.

We define \emph{an action} of $G$ on $\partial_M G$ as follows. For each element $g$ in $G$ and $[\alpha]$ in $\partial_M G$, $g[\alpha]=[\beta]$, where $\alpha$ and $\beta$ are two rays at the basepoint in some Cayley graph of $G$ such that the Hausdorff distance between $g\alpha$ and $\beta$ is finite.
\end{rem}

\begin{defn}
Let $X$ and $Y$ be proper geodesic metric spaces and $p \in X$, $p' \in Y$.
We say that $f\!:\partial_M {X_p} \rightarrow \partial_M {Y_{p'}}$ is \emph{Morse preserving} if given $N$ in $\mathcal{M}$ there exists
an $N'$ in $\mathcal{M}$ such that $f$ injectively maps $\partial_M^N {X_p}$ to $\partial_M^{N'} {Y_{p'}}$.

Let $G$ and $H$ be two finitely generated groups. Let $\phi\!:H\rightarrow G$ be a quasi-isometric embedding. Let $\Phi: \Gamma(H,T)\rightarrow \Gamma(G,S)$ be an extension of $\phi$ for some (any) generating sets $T$ and $S$. Then $\phi$ induces a \emph{Morse preserving map} if the induced map $\partial_M\Phi$ is Morse preserving. We call $\partial_M\Phi$ the Morse preserving map induced by $\phi$, denoted $\partial_M\phi$
\end{defn}

\begin{prop}[Proposition 4.2 in \cite{MC}]
\label{Corte} 
If $\Phi\!: X \rightarrow Y$ is a quasi-isometric embedding that induces a Morse preserving map, then the induced map $\partial_M \Phi: \partial_M X \rightarrow \partial_M Y$ is a topological embedding.
\end{prop}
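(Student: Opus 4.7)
The plan is to reduce the topological embedding statement to the filtered pieces $\partial_M^N X_p$, which are compact by Cordes' work, and to combine the hypothesis ``Morse preserving'' (forward control of Morse gauges) with the observation that a quasi-isometric embedding also controls Morse gauges \emph{backwards}. Fix a $(K,L)$-quasi-isometric embedding $\Phi\!:X\to Y$. First I note that $\partial_M\Phi$ is well defined: if $\alpha$ is a Morse geodesic ray in $X$, then $\Phi\circ\alpha$ is a Morse quasi-geodesic in $Y$, and the ray analogue of Lemma \ref{b1}(3) produces a Morse geodesic ray $\beta$ from $p'$ at finite Hausdorff distance from $\Phi\circ\alpha$, with $[\beta]$ depending only on $[\alpha]$. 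Injectivity of $\partial_M\Phi$ follows from Lemma \ref{b2}(1): if $\partial_M\Phi[\alpha_1]=\partial_M\Phi[\alpha_2]$ then $\Phi\circ\alpha_1$ and $\Phi\circ\alpha_2$ lie at finite Hausdorff distance, hence so do $\alpha_1,\alpha_2$ in $X$, and since both rays emanate from $p$ we conclude $[\alpha_1]=[\alpha_2]$.

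For continuity of $\partial_M\Phi$ I would use the universal property of the direct limit: it suffices to show that for every Morse gauge $N$ the restriction of $\partial_M\Phi$ to $\partial_M^N X_p$ is continuous into $\partial_M Y_{p'}$. By the Morse-preserving hypothesis the image lies in $\partial_M^{N'} Y_{p'}$ for some $N'=N'(N)$. Given $[\alpha_n]\to[\alpha]$ in $\partial_M^N X_p$ with $N$-Morse representatives converging uniformly on compacta, the uniform Morse gauge of the $\alpha_n$ makes the Hausdorff distances between $\Phi\circ\alpha_n$ and the representing geodesics $\beta_n$ uniformly bounded. Thus every subsequence of $\{\beta_n\}$ admits, by Arzel\`a--Ascoli and properness, a further subsequence converging uniformly on compacta to some $N'$-Morse geodesic ray $\beta'$, and the quasi-isometric embedding inequality combined with $\alpha_n\to\alpha$ forces $\beta'$ to remain at bounded distance from $\Phi\circ\alpha$, so $[\beta']=\partial_M\Phi[\alpha]$.

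The hard part is continuity of $(\partial_M\Phi)^{-1}$ on the image. The missing ingredient is \emph{backward} Morse-control: for every gauge $N'$ there exists a gauge $N_0=N_0(N',K,L)$ such that every geodesic ray $\alpha$ in $X$ with $\partial_M\Phi[\alpha]\in\partial_M^{N'} Y_{p'}$ is itself $N_0$-Morse. This follows from a standard push-forward/pull-back argument using Lemma \ref{b2}(1): given a $(K_0,L_0)$-quasi-geodesic with endpoints on $\alpha$, push it forward by $\Phi$ to a quasi-geodesic with endpoints at bounded distance from the $N'$-Morse representative $\beta$, apply the Morse property of $\beta$, and pull back. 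Now suppose $\partial_M\Phi[\alpha_n]\to \partial_M\Phi[\alpha]$ in $\partial_M Y_{p'}$; direct-limit convergence puts these classes eventually in some $\partial_M^{N'} Y_{p'}$, and backward Morse-control places $[\alpha_n],[\alpha]$ in $\partial_M^{N_0} X_p$. Compactness of $\partial_M^{N_0} X_p$ combined with continuity from the previous paragraph, injectivity, and the Hausdorffness of $\partial_M^{N'(N_0)} Y_{p'}$ yields, by the usual subsequence argument, that $[\alpha_n]\to[\alpha]$ in $\partial_M^{N_0} X_p$, and hence in $\partial_M X_p$. The main obstacle is precisely this matching of forward and backward Morse control across the two filtrations.
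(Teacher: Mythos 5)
This proposition is not proved in the paper at all: it is quoted verbatim from Cordes \cite{MC} (Proposition 4.2 there), so there is no in-paper argument to compare against, only the citation. Your reconstruction follows the same strategy as the cited source: work stratum by stratum, use that each $\partial_M^N X_p$ is compact and each $\partial_M^{N'} Y_{p'}$ is Hausdorff so that a continuous injection of strata is a homeomorphism onto its image, and then pass to the direct limit. The two technical points you isolate are indeed where the content lies, and both hold: the \emph{backward} gauge control works because the Hausdorff distance between $\Phi\circ\alpha$ (a $(K,L)$--quasi-geodesic ray) and its asymptotic $N'$--Morse geodesic representative $\beta$ is bounded by a constant depending only on $N'$, $K$, $L$ and the basepoint offset (a standard lemma in \cite{MC}), after which the push-forward/pull-back argument gives a gauge $N_0(N',K,L)$; and the assertion that a convergent sequence in the target eventually lies in, and converges within, a single $\partial_M^{N'} Y_{p'}$ is not automatic for an abstract direct limit but is exactly what the sequential definition of closed sets used here hands you when testing relative closedness of $\partial_M\Phi(F)$. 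Two minor remarks: injectivity need not be re-derived, since it is built into the definition of a Morse preserving map on each stratum (and any two classes lie in a common stratum because Morse gauges are directed); and in the final step one should say explicitly that every subsequence of $\{[\alpha_n]\}$ having a further subsequence converging to $[\alpha]$ forces $[\alpha_n]\to[\alpha]$, which holds in any topological space. With those caveats the argument is sound.
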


The following theorem is a corollary of Proposition \ref{p1} and Proposition ~\ref{Corte}.

\begin{thm}
Let $(G,\PP)$ be a finitely generated relatively hyperbolic group. Then for each peripheral subgroup $P$ in $\PP$ the inclusion $i_P\!:P\hookrightarrow G$ induces a Morse preserving map. Therefore, $\partial_M i_P: \partial_M P \rightarrow \partial_M G$ is a topological embedding.
\end{thm}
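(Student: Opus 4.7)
The plan is to combine Proposition \ref{p1} with Proposition \ref{Corte} in a straightforward way. First I would choose a finite generating set $S$ of $G$ such that $T = S \cap P$ generates $P$ (such an $S$ can always be arranged by enlarging any given generating set). By Lemma \ref{thm:PeripheralQuasiconvex}, the inclusion $\Gamma(P,T) \hookrightarrow \Gamma(G,S)$ is a quasi-isometric embedding, so $i_P$ is a quasi-isometric embedding at the level of Cayley graphs, and hence extends to a map $\Phi$ whose induced boundary map $\partial_M i_P = \partial_M \Phi$ we want to study.

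Next I would verify the Morse-preserving condition. Fix a basepoint $e$. Given a Morse gauge $N$ on $P$, I need to produce a Morse gauge $N'$ on $G$ such that $\partial_M i_P$ injectively sends $\partial_M^N P_e$ into $\partial_M^{N'} G_e$. Let $[\beta] \in \partial_M^N P_e$ with $\beta$ an $N$-Morse geodesic ray in $\Gamma(P,T)$ starting at $e$. By Proposition \ref{p1}, there is a Morse gauge $N'$ depending only on $N$ (and the geometry of $\Gamma(G,S)$) and an $N'$-Morse geodesic ray $\alpha$ in $\Gamma(G,S)$ starting at $e$ such that the Hausdorff distance between $\alpha$ and the image of $\beta$ in $\Gamma(G,S)$ is finite. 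By the definition of the action of $i_P$ on Morse boundaries, $\partial_M i_P([\beta]) = [\alpha] \in \partial_M^{N'} G_e$. Independence of representative follows from the fact that finite Hausdorff distance is preserved under the inclusion.

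To finish I would check injectivity of the restriction $\partial_M i_P: \partial_M^N P_e \to \partial_M^{N'} G_e$. If $[\beta_1] \neq [\beta_2]$ in $\partial_M P_e$, then the Hausdorff distance between $\beta_1$ and $\beta_2$ in $\Gamma(P,T)$ is infinite, equivalently $d_T(\beta_1(t),\beta_2(t))$ is unbounded. Since $d_T(u,v) \leq L\, d_S(u,v) + L$ for all $u,v \in P$ by the quasi-isometric embedding, $d_S(\beta_1(t),\beta_2(t))$ is also unbounded, so the images of $\beta_1$ and $\beta_2$ have infinite Hausdorff distance in $\Gamma(G,S)$. Consequently the corresponding rays $\alpha_1$ and $\alpha_2$ constructed via Proposition \ref{p1} also have infinite Hausdorff distance, so $[\alpha_1] \neq [\alpha_2]$. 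This shows $\partial_M i_P$ is Morse preserving, and then Proposition \ref{Corte} immediately yields that $\partial_M i_P: \partial_M P \to \partial_M G$ is a topological embedding.

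The main substance of the argument is already isolated in Proposition \ref{p1}, so the proof itself is essentially a citation-level assembly; the only points that require a moment of care are the choice of a compatible generating set making $T = S\cap P$ generate $P$, and the injectivity check using the quasi-isometric nature of the inclusion. I do not anticipate any genuine obstacle beyond those two bookkeeping steps.
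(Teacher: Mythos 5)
Your proposal is correct and follows exactly the paper's route: the paper derives this theorem as an immediate corollary of Proposition \ref{p1} and Proposition \ref{Corte}, stating no further details. The bookkeeping you supply (choosing $S$ with $S\cap P$ generating $P$, and the injectivity check via the quasi-isometric embedding of $\Gamma(P,T)$ into $\Gamma(G,S)$) is exactly what the paper leaves implicit.
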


\begin{rem}
By the above theorem, we can consider the Morse boundary of each peripheral subgroup $P$ in a finitely generated relatively hyperbolic group $(G,\PP)$ is a subspace of the Morse boundary of $G$. 

For each peripheral left coset $gP$, we define its boundary in $\partial_M G$, denoted $\partial_M {gP}$, to be the set of all $[\gamma]\in\partial_M G$ where $\gamma \subset N_R(gP)$ for some $R$. Each element in $\partial_M gP$ for some peripheral left coset $gP$ is said to be a \emph{peripheral limit point}. Each element $x$ in $\partial_M G$ that does not lie in any $\partial_M gP$ is said to be a \emph{non-peripheral limit point}. It is not hard to see that $\partial_M gP=g\bigl(\partial_M P\bigr)$. Two peripheral limit points are said to be of \emph{the same type} if they both lie in $\partial_M gP$ for some peripheral left coset $gP$.
\end{rem}

\section{Bowditch boundaries and some connection to Morse boundaries}
\label{bbvsmb}
In this section, we review the concept of Bowditch boundary in \cite{MR2922380}. We will show a connection between Morse boundary and Bowditch boundary for a finitely generated relatively hyperbolic group $(G,\PP)$.

Suppose $(G,\PP)$ is relatively hyperbolic with a finite generating set $S$. Let $\hat\Gamma (G,S,\PP)$ be a coned-off Cayley graph of $(G,\PP)$. If everything is clear from context, we can use the notation $\hat{\Gamma}$ instead of using $\hat\Gamma (G,S,\PP)$. Since $(G,\PP)$ is relatively hyperbolic, $\hat{\Gamma}$ is a connected, fine, and hyperbolic graph. Let $V_{\infty}(\hat{\Gamma})$ be the set of all peripheral vertices of $\hat{\Gamma}$ and $\partial \hat{\Gamma}$ be the usual hyperbolic boundary of $\hat{\Gamma}$. The set $\Delta_{\infty}(\hat{\Gamma})=V_{\infty}(\hat{\Gamma})\bigcup\partial \hat{\Gamma}$ is \emph{the infinite closure} of $\hat{\Gamma}$. The \emph{Bowditch boundary} of $(G,\PP)$, denoted $\partial (G,\PP)$, is defined as $\Delta_{\infty}(\hat{\Gamma})$ and we put a topology on $\Delta_{\infty}(\hat{\Gamma})$ as follows:

For each $a\in\Delta_{\infty}(\hat{\Gamma})$ and $A$ a finite set of $V(\hat{\Gamma})$ that does not contain $a$, we define $M(a,A)$ to be the set of points $b\in\Delta_{\infty}(\hat{\Gamma})$ such that there is at least one geodesic $\alpha$ in $\hat{\Gamma}$ from $a$ to $b$ that does not meet $A$. Then the collection of all such sets $M(a,A)$ form a basis of a topology on $\Delta_{\infty}(\hat{\Gamma})$ (see \cite{MR2922380}). We define a set $U\subset \Delta_{\infty}(\hat{\Gamma})$ to be open if for all $a\in U$, there is a finite subset $A\subset V(\hat{\Gamma})$ that does not contain $a$ such that $M(a,A)\subset U$.

\begin{rem}
Bowditch has shown that the Bowditch boundary does not depend on the choice of finite generating set (see \cite{MR2922380}).

Each element in $V_{\infty}(\hat{\Gamma})$ is said to be a \emph{parabolic point} in $\partial (G,\PP)$ and each element in $\partial \hat{\Gamma}$ is said to be a \emph{non-parabolic point} in $\partial (G,\PP)$.
\end{rem}

We now state some topological properties of $\partial (G,\PP)$ from \cite{MR2922380}. For each $\lambda\geq 1, c\geq 0$, $a\in\Delta_{\infty}(\hat{\Gamma})$ and a finite set $A$ of $V(\hat{\Gamma})$ that does not contain $a$, we define $M_{(\lambda,c)}(a,A)$ to be the set of points $b\in\Delta_{\infty}(\hat{\Gamma})$ such that there is at least one $(\lambda,c)$--quasi-geodesic arc $\alpha$ in $\hat{\Gamma}$ from $a$ to $b$ that does not meet $A$.

\begin{lem}[\cite{MR2922380}] Let $(G,\PP)$ be a finitely generated relatively hyperbolic group. Then:
\begin{enumerate}
\item For each $\lambda\geq 1,c\geq 0$, the collection of all sets of the form $M_{(\lambda,c)}(a,A)$ forms a basis for the topology of the Bowditch boundary $\partial (G,\PP)$.
\item The Bowditch boundary $\partial (G,\PP)$ is compact and Hausdorff. 
\end{enumerate}
\end{lem}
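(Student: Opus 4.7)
The plan is to exploit Bowditch's machinery for fine hyperbolic graphs, with quasi-geodesic stability as the central tool for part (1) and a fineness-based pigeonhole argument for part (2). Throughout, the key difficulty is that $\hat\Gamma(G,S,\PP)$ is not locally finite, so the usual arguments for Gromov boundaries of proper hyperbolic spaces do not apply directly; fineness is the substitute hypothesis.

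For part (1), I would first note that any geodesic in $\hat\Gamma$ is trivially a $(\lambda,c)$-quasi-geodesic for every $\lambda\geq 1$ and $c\geq 0$, so $M(a,A)\subseteq M_{(\lambda,c)}(a,A)$ for every choice of parameters. Thus each original basic neighborhood $M(a,A)$ witnesses that any $M_{(\lambda,c)}(a,A)$ contains an original basic neighborhood around each of its points. The core step is the reverse: showing each $M_{(\lambda,c)}(a,A)$ is open in the Bowditch topology, i.e.\ that around every $b\in M_{(\lambda,c)}(a,A)$ I can produce a finite $A'\subset V(\hat\Gamma)$ with $b\in M(b,A')\subseteq M_{(\lambda,c)}(a,A)$. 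Here I would invoke the stability theorem for quasi-geodesics in (possibly non-proper) $\delta$-hyperbolic graphs: every $(\lambda,c)$-quasi-geodesic arc lies in a bounded Hausdorff neighborhood, of size $H=H(\lambda,c,\delta)$, of any geodesic joining its endpoints. Fineness then guarantees that only finitely many vertices within this $H$-neighborhood are close to $A$, so a suitable finite enlargement $A'$ of $A$ forces every geodesic from $a$ to $b'$ (for $b'$ close to $b$) to avoid $A$.

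For part (2), Hausdorffness proceeds by picking distinct $a,b\in\partial(G,\PP)$, taking a bi-infinite geodesic $\gamma$ in $\hat\Gamma$ joining them, and selecting an interior vertex $v$ of $\gamma$. Quasi-geodesic stability ensures any geodesic between a point near $a$ and a point near $b$ passes through a bounded neighborhood of $v$; enlarging $\{v\}$ to a finite set $A_v$ that meets every such neighborhood, the sets $M(a,A_v)$ and $M(b,A_v)$ are disjoint open neighborhoods.

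For compactness, I would argue sequentially: given $(x_n)\subset\partial(G,\PP)$ and a fixed basepoint $o\in V(\hat\Gamma)$, choose a geodesic $\gamma_n$ from $o$ to $x_n$. Since fineness implies that each vertex is incident to only finitely many edges that lie on geodesics of any bounded length, a diagonal extraction produces a subsequence whose geodesics agree on increasingly long initial segments, yielding a limit geodesic whose endpoint in $\Delta_\infty(\hat\Gamma)$ is the desired accumulation point. Combined with part (1), convergence in the basis topology follows. The main obstacle, as indicated, is arranging the diagonal extraction in the non-proper setting: at each stage one must show that only finitely many continuations of a given initial segment are realized by the $\gamma_n$, which is exactly where fineness (applied to both edges of $\hat\Gamma$ and to the peripheral vertices whose cone-edges have unbounded multiplicity) must be used carefully, distinguishing geodesic segments passing through a peripheral vertex from those avoiding it.
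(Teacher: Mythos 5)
First, a point of comparison: the paper does not prove this lemma at all --- it is stated with the attribution \cite{MR2922380} and no argument is given, so there is no in-paper proof to measure your sketch against; you are in effect reproving a result of Bowditch's. Your outline does identify the right tools (stability of quasi-geodesics in a $\delta$--hyperbolic graph together with fineness), but it has genuine gaps. For part (1), showing that the sets $M_{(\lambda,c)}(a,A)$ are open in the Bowditch topology is only half of what ``forms a basis'' requires. Since $M(a,A)\subseteq M_{(\lambda,c)}(a,A)$, the new sets are \emph{larger} than the old ones, so you must also prove the refinement direction: for every $a$ and every finite $A_0$ there is a finite $A$ with $M_{(\lambda,c)}(a,A)\subseteq M(a,A_0)$. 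This is the substantive half, and it is exactly where fineness does real work: given a $(\lambda,c)$--quasi-geodesic from $a$ to $b$ avoiding $A$, stability only says that a geodesic from $a$ to $b$ stays in the $H$--neighborhood of it, and that neighborhood contains infinitely many vertices because $\hat{\Gamma}$ is not locally finite; one needs the finiteness statement that only finitely many vertices near $A_0$ can actually occur on the relevant geodesics. Your closing claim in the openness direction (``every geodesic from $a$ to $b'$ avoids $A$'') is likewise stronger than what is needed and is not justified as stated.

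For part (2), the compactness argument does not go through as written. Sequential compactness does not imply compactness without first establishing second countability or metrizability of $\Delta_{\infty}(\hat{\Gamma})$, which you have not done. More seriously, the diagonal extraction fails at cone vertices: a peripheral vertex $v_{gP}$ has infinite valence, so the geodesics $\gamma_n$ can agree up to $v_{gP}$ and then continue through infinitely many distinct elements of $gP$; there are genuinely infinitely many geodesic segments of a given length issuing from the basepoint once a cone vertex is traversed. You flag exactly this difficulty in your last sentence but do not resolve it; the resolution (that in this situation $v_{gP}$ itself is an accumulation point of the sequence, which needs its own argument) is the heart of the compactness proof. The Hausdorff argument is right in spirit, but ``enlarging $\{v\}$ to a finite set $A_v$ that meets every such neighborhood'' cannot be done naively, since the $H$--ball about $v$ is infinite; again one must first invoke fineness to cut the relevant vertex set down to a finite one, and the cases where one or both of $a,b$ are parabolic (cone) vertices rather than ideal points require separate treatment.
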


The following lemma is a well-known property of geodesics in $\delta$--hyperbolic spaces.

\begin{lem}
\label{l14}
For each choice of positive constants $\delta$ and $\sigma$, there is a positive number $R=R(\delta,\sigma)$ such that the following holds. Let $\alpha$ and $\alpha'$ be two equivalent geodesic rays in a $\delta$--hyperbolic space such that $d(\alpha_+,\alpha'_+)\leq\sigma$ or let $\alpha$ and $\alpha'$ be two geodesic segments such that $d(\alpha_+,\alpha'_+)\leq\sigma$ and $d(\alpha_-,\alpha'_-)\leq\sigma$. Then the Hausdorff distance between them is at most $R$.
\end{lem}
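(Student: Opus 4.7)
The plan is to reduce both cases to the standard $2\delta$-thin quadrilateral property in a $\delta$-hyperbolic space: every side of a geodesic quadrilateral lies in the $2\delta$-neighborhood of the union of the other three sides (an immediate consequence of splitting the quadrilateral into two thin triangles).

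\textbf{Segment case.} Suppose $\alpha,\alpha'$ are geodesic segments with $d(\alpha_-,\alpha'_-)\le\sigma$ and $d(\alpha_+,\alpha'_+)\le\sigma$. I would form the geodesic quadrilateral whose four sides are $\alpha,\alpha'$ and two short geodesic segments $\beta_-,\beta_+$ joining the corresponding endpoints, each of length at most $\sigma$. By $2\delta$-thinness, any point on $\alpha$ lies within $2\delta$ of $\beta_-\cup\alpha'\cup\beta_+$, hence within $2\delta+\sigma$ of $\alpha'$; by symmetry the Hausdorff distance is at most $R_0:=2\delta+\sigma$, which depends only on $\delta$ and $\sigma$.

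\textbf{Ray case.} Let $\alpha,\alpha'$ be equivalent rays with basepoints at distance at most $\sigma$. Equivalence gives a (possibly large) constant $K$ with the two rays at Hausdorff distance at most $K$, but I must ensure the final bound $R$ does \emph{not} depend on $K$. Given $p=\alpha(t_0)$, I pick a large parameter $T>t_0+K+2\delta+1$ and a matching $T'$ with $d(\alpha(T),\alpha'(T'))\le K$. I then apply the segment argument to the geodesic sub-segments $\alpha|_{[0,T]}$ and $\alpha'|_{[0,T']}$, using a quadrilateral whose short sides have lengths at most $\sigma$ (near the basepoints) and at most $K$ (near $\alpha(T)$). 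By $2\delta$-thinness, $p$ lies within $2\delta$ of one of the three non-$\alpha$ sides; however, any point on the short side near $\alpha(T)$ is at distance at least $(T-t_0)-K>2\delta$ from $p$, so this side is ruled out. Therefore $p$ lies within $2\delta+\sigma$ of $\alpha'$, and by symmetry the Hausdorff distance is again at most $R:=2\delta+\sigma$.

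\textbf{Main obstacle.} The result is classical and the computations are routine; the only real subtlety is producing a bound independent of the equivalence constant $K$ of the two rays. This is exactly what the choice of an auxiliary parameter $T$ large enough (as a function of $t_0$ and $K$) accomplishes: it pushes the ``$K$-short'' side of the auxiliary quadrilateral outside the $2\delta$-neighborhood of $p$, so $K$ enters only into the location of that side and not into the final fellow-traveling constant.
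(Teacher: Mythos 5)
Your argument is correct. The paper offers no proof of this lemma (it is stated as a well-known property of $\delta$--hyperbolic spaces), and your thin-quadrilateral argument is the standard one: the segment case is immediate from $2\delta$--thinness of quadrilaterals, and in the ray case you correctly handle the one genuine subtlety, namely choosing the truncation parameter $T$ large enough that the ``$K$--short'' side is excluded from the $2\delta$--neighborhood of the given point, so the final constant $R=2\delta+\sigma$ is independent of the equivalence constant $K$.
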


We now review some geometric connections between the Cayley graph and the coned off Cayley graph. From now, we denote the metric in $\Gamma(G,S)$ by $d_S$, the metric in $\hat{\Gamma}(G,S,\PP)$ by $d$. 
\begin{lem}[Lemma 4.13 in \cite{MR3143594}]
\label{l8}
Let $c$ and $c'$ be two equivalent geodesic rays in $\hat{\Gamma}(G,S,\PP)$ (i.e. the Hausdorff distance between $c$ and $c'$ is finite with respect to d) with the same initial point $h_0$. Suppose that $(g_n)$ and $(g'_n)$ are the sequences of all $G$--vertices of $c$ and $c'$ respectively. Then the Hausdorff distance between $(g_n)$ and $(g'_n)$ is finite with respect to the metric $d_S$.
\end{lem}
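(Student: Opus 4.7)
The plan is to combine the $\delta$-hyperbolicity of $\hat{\Gamma}$ with the correspondence between $G$-vertices on $\hat{\Gamma}$-geodesics and transition points on $\Gamma(G,S)$-geodesics supplied by Lemma~\ref{lemma5b}.

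First I would apply Lemma~\ref{l14} with $\sigma = 0$ to the equivalent rays $c$ and $c'$ in the $\delta$-hyperbolic space $\hat{\Gamma}$: since they share the initial point $h_0$, the Hausdorff distance between $c$ and $c'$ in the metric $d$ is bounded by some $R_0 := R(\delta, 0)$. Consequently, for each $G$-vertex $g_n$ on $c$ one can find a $G$-vertex $g'_{m(n)}$ on $c'$ with $d(g_n, g'_{m(n)}) \leq R_0 + 1$ (the nearest point on $c'$ is at $\hat{\Gamma}$-distance at most $R_0$, and it is within one unit of a $G$-vertex of $c'$). However, this bound in $d$ does not directly translate into a bound in $d_S$, because $g_n$ and $g'_{m(n)}$ may lie in a common peripheral coset while being very far apart in the Cayley metric. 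The task is therefore to show that some (not necessarily the $\hat{\Gamma}$-closest) $G$-vertex of $c'$ is within uniformly bounded $d_S$-distance of $g_n$.

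To achieve this, I would fix a large $N > n$ and compare the $\hat{\Gamma}$-geodesic subsegments $c[h_0, g_N]$ and $c'[h_0, g'_{m(N)}]$, whose endpoints are pairwise $\hat{\Gamma}$-close. Let $\sigma$ and $\sigma'$ denote $\Gamma(G,S)$-geodesics from $h_0$ to $g_N$ and from $h_0$ to $g'_{m(N)}$, respectively. Applying Lemma~\ref{lemma5b} to each, the $G$-vertices of $c[h_0, g_N]$ and $c'[h_0, g'_{m(N)}]$ lie, in the metric $d_S$, within Hausdorff distance $L$ of the $(\epsilon, R)$-transition points of $\sigma$ and $\sigma'$. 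So it suffices to compare these two transition point sequences in $d_S$.

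The main obstacle is precisely this last comparison: although $\sigma$ and $\sigma'$ share the initial point $h_0$ and have $\hat{\Gamma}$-close terminal points, their terminal points may differ by a peripheral shortcut rather than by a bounded $d_S$-distance. The key observation is that such a shortcut can introduce at most one additional deep component, and Lemma~\ref{lemma5a} uniquely assigns each deep component to a peripheral coset, forcing matched deep components of $\sigma$ and $\sigma'$ to lie in the same coset; Lemma~\ref{mainlemma} applied to the resulting short bigons then bounds the $d_S$-drift between corresponding transition points uniformly. Chaining the three bounds (Lemma~\ref{lemma5b} for $c$, the matching of transition points, and Lemma~\ref{lemma5b} for $c'$) yields a uniform bound on $d_S\bigl(g_n, \{g'_m\}_m\bigr)$ for every $n \leq N$; letting $N \to \infty$ and running the symmetric argument starting from a $G$-vertex of $c'$ completes the proof.
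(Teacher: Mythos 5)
The paper does not actually prove this statement: it is imported verbatim as Lemma 4.13 of \cite{MR3143594}, where it is established using the bounded coset penetration property for quasi-geodesics without backtracking in $\hat{\Gamma}$ (Osin/Farb), which yields directly that the phase ($G$--)vertices of two relative (quasi-)geodesics with common endpoints lie within uniform $d_S$--distance of each other. Your opening moves are sound and correctly identify the difficulty: thinness of the ideal triangle gives $\hat{\Gamma}$--closeness of the rays, and Lemma \ref{lemma5b} legitimately converts the problem into comparing the $(\epsilon,R)$--transition points of two $\Gamma(G,S)$--geodesics $\sigma$, $\sigma'$ issuing from $h_0$ with $\hat{\Gamma}$--close terminal points.

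The gap is in the last paragraph, which is where all the content lives. First, Lemma \ref{lemma5a} is a statement about a \emph{single} geodesic (each deep component of one geodesic determines a unique peripheral coset); it says nothing about two different geodesics, and in particular cannot ``force matched deep components of $\sigma$ and $\sigma'$ to lie in the same coset.'' That matching --- two geodesics with controlled endpoints penetrate essentially the same peripheral cosets, with $d_S$--close entry and exit points --- is precisely the bounded coset penetration phenomenon, i.e.\ it is the substance of the lemma you are trying to prove, so invoking it here is circular. Second, Lemma \ref{mainlemma} cannot close the loop: its conclusion is a bound of the form $K\log_2\abs{p}$ depending on the length of the comparison path, so it is uniform only if the ``bigons'' are already known to have uniformly bounded length --- but whether corresponding transition points drift apart is exactly what is in question, and in your setup $\abs{p}$ grows like $N$. (There is also the unaddressed issue that the terminal points $g_N$ and $g'_{m(N)}$ may be arbitrarily far apart in $d_S$, so $\sigma$ and $\sigma'$ are not a bigon in $\Gamma(G,S)$ at all.) To complete the argument you need a genuinely stronger input than anything quoted in this paper --- Osin's Theorem 3.23, Farb's BCP, or Hruska's stability of transition points for geodesics with $d_S$--close (or relatively close) endpoints --- and once you have that, the detour through $\sigma$ and $\sigma'$ becomes unnecessary.
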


\begin{lem}[Lemma 4.16 in \cite{MR3143594}] 
\label{l5}
There is a positive constant $A$ such that the following holds. Let $\alpha$ be a geodesic ray in $\Gamma(G,S)$ such that $\alpha$ is not contained in $N_R(gP)$ for any peripheral left coset $gP$ and any positive number $R$. Then there is a geodesic ray $c$ in $\hat{\Gamma}(G,S,\PP)$ such that $c$ and $\alpha$ have the same initial point, all $G$--vertices of $c$ lie in the $A$--neighborhood of $\alpha$ with respect to the metric $d_S$ and $\alpha$ lies in the $A$--neighborhood of the $c$ with respect to the metric $d$.
\end{lem}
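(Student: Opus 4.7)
The plan is to build $c$ as a limit of $\hat{\Gamma}$-geodesics from the basepoint $\alpha(0)$ to the successive points $\alpha(n)\in\alpha$, and then use the transition-point description of Lemma~\ref{lemma5b} to verify the two Hausdorff bounds. Fix constants $\epsilon$, $R$, $L$ as in Lemmas~\ref{lemma5a} and~\ref{lemma5b}. For each $n\in\NN$, let $\alpha_n:=\alpha|_{[0,n]}$ and let $\hat{c}_n$ be an $\hat{\Gamma}$-geodesic from $\alpha(0)$ to $\alpha(n)$. By Lemma~\ref{lemma5b}, the set of $G$-vertices of $\hat{c}_n$ is at $d_S$-Hausdorff distance at most $L$ from the set of $(\epsilon,R)$-transition points of $\alpha_n$.

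The next step is to show that $d\bigl(\alpha(0),\alpha(n)\bigr)\to\infty$. If instead some subsequence satisfied $d\bigl(\alpha(0),\alpha(n_k)\bigr)\leq M$, each $\hat{c}_{n_k}$ would have at most $M+1$ vertices, giving a uniform bound on the number of $(\epsilon,R)$-transition points of $\alpha_{n_k}$. Combined with Lemma~\ref{lemma5a}, this would force $\alpha|_{[T,\infty)}$, for some fixed $T$, to lie in a single $(\epsilon,R)$-deep component associated to one peripheral left coset $gP$, so that $\alpha\subset N_{T+R}(gP)$, contradicting the hypothesis on $\alpha$.

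Once the $\hat{\Gamma}$-distances to $\alpha(n)$ are unbounded, I would extract a limit ray $c$ by a diagonal argument: in a $\delta$-hyperbolic graph any two geodesics from a common endpoint $\delta$-fellow travel along all but the final $\delta$ of the shorter one, and fineness of $\hat{\Gamma}$ bounds, for each length $\ell$, the number of geodesic segments of length $\ell$ based at $\alpha(0)$ meeting any fixed finite set of vertices. This yields a subsequence whose initial portions stabilize up to $\delta$-perturbation, and the limit is a geodesic ray $c$ in $\hat{\Gamma}$ based at $\alpha(0)$. For the first Hausdorff bound, every $G$-vertex of $c$ is the $\delta$-fellow of a $G$-vertex on some approximating $\hat{c}_n$, hence within $d_S$-distance $L+\delta$ of a transition point on $\alpha$. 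For the second, a point $x\in\alpha$ is either an $(\epsilon,R)$-transition point, in which case $d(x,c)\leq d_S(x,c)$ is controlled by Lemma~\ref{lemma5b}, or it is $(\epsilon,R)$-deep in a unique coset $gP$ (by Lemma~\ref{lemma5a}), in which case $d(x,v_{gP})\leq\epsilon+\tfrac12$ and the bracketing transition points translate into $G$-vertices of $\hat{c}_n$ that are $d$-close to $v_{gP}$, so $v_{gP}$ is $d$-close to $c$. The main obstacle is the extraction step: $\hat{\Gamma}$ is not locally finite, so Arzel\`a--Ascoli does not apply directly, and one must combine fineness with hyperbolic fellow-travelling to ensure the subsequential limit exists.
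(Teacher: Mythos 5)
First, a point of reference: this paper does not prove Lemma~\ref{l5}; it is imported verbatim from Lemma 4.16 of \cite{MR3143594}, so there is no in-paper proof to compare against. That said, your overall architecture --- connect $\alpha(0)$ to $\alpha(n)$ by $\hat{\Gamma}$--geodesics $\hat{c}_n$, control their $G$--vertices via the transition-point statement of Lemma~\ref{lemma5b}, rule out boundedness of $d\bigl(\alpha(0),\alpha(n)\bigr)$ using the hypothesis that $\alpha$ escapes every neighborhood of every peripheral left coset, and pass to a limit --- is the natural route and matches the strategy of the source. The unboundedness step and the verification of the two Hausdorff bounds for the limit ray are fine in outline, modulo a harmless imprecision: the set of $(\epsilon,R)$--transition points of $\alpha_{n_k}$ is not finite (every point within $R$ of an endpoint is one); what is actually bounded is the number of parameter intervals of length about $2L$ needed to cover them, which still forces a tail of $\alpha$ into a single deep component and yields the contradiction you want.

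The genuine gap is exactly where you located it: the extraction of the limit ray, and both mechanisms you invoke for it are false as stated. (i) Two geodesics in a $\delta$--hyperbolic space with a common endpoint do \emph{not} fellow travel along all but the final $\delta$ of the shorter one; they fellow travel only up to roughly the Gromov product of their far endpoints (consider a tripod). To use this you must first show $\bigl(\alpha(n)\mid\alpha(m)\bigr)_{\alpha(0)}\to\infty$ in $\hat{\Gamma}$, which is itself a nontrivial piece of the proof. (ii) Fineness does not bound the number of geodesic segments of length $\ell$ based at $\alpha(0)$ that meet a fixed finite vertex set: a cone vertex $v_{gP}$ has infinite valence, so infinitely many $\hat{\Gamma}$--geodesics of length $1$ issuing from a point of $gP$ all pass through the single vertex $v_{gP}$ and then fan out over $gP$. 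What fineness actually gives (Bowditch) is finiteness of the set of arcs of bounded length between two \emph{fixed} vertices. The extraction therefore has to be run differently: show that the $G$--vertices of the initial portion of each $\hat{c}_n$ lie within $d_S$--distance $L$ of a uniformly bounded initial segment of $\alpha$ (this uniformity in $n$ needs its own argument relating the order of $G$--vertices along $\hat{c}_n$ to the order of the corresponding transition points along $\alpha$), hence range over a fixed finite subset of the locally finite graph $\Gamma(G,S)$; the cone vertices of $\hat{c}_n$ then interpolate between two specified $G$--vertices of that finite set, and fineness bounds the possible choices there. A diagonal argument then stabilizes initial segments and produces $c$. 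Without these repairs the limit ray is not known to exist, so the proof as written does not go through.
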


\begin{lem}[Lemma 4.17 in \cite{MR3143594}] 
\label{l15}
There is a positive constant $B$ such that the following holds. Let $\alpha$ be a geodesic ray in $\Gamma(G,S)$ such that $\alpha$ is contained in $N_R(g^*P^*)$ for some peripheral left coset $g^*P^*$ and some positive number $R$. Let $c$ be a geodesic in $\hat{\Gamma}(G,S,\PP)$ that connects the initial point of $\alpha$ and $v_{g^*P^*}$. Then all $G$--vertices of $c$ lie in the $B$--neighborhood of $\alpha$ with respect to the metric $d_S$ and $\alpha$ lies in the $B$--neighborhood of the $c$ with respect to the metric $d$.
\end{lem}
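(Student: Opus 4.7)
The plan is to prove both inclusions using the fact that the $\hat{\Gamma}$-length of $c$ is bounded. First, since $h_0 \in N_R(g^*P^*)$ with respect to $d_S$, I would pick $x_0 \in g^*P^*$ with $d_S(h_0, x_0) \leq R$ and concatenate with the peripheral edge $x_0 \to v_{g^*P^*}$ of length $1/2$ to obtain $d(h_0, v_{g^*P^*}) \leq R + 1/2$. Thus $c$ has $\hat{\Gamma}$-length at most $R + 1/2$ and only finitely many $G$-vertices $h_0 = g_0, g_1, \dots, g_k$, where $g_k \in g^*P^*$ is the vertex adjacent to $v_{g^*P^*}$ along $c$.

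The inclusion $\alpha \subset N_B(c)$ with respect to $d$ is the easier direction. For any $y \in \alpha$, the hypothesis gives some $x_y \in g^*P^*$ with $d_S(y, x_y) \leq R$, so $d(y, v_{g^*P^*}) \leq R + 1$; since $v_{g^*P^*}$ is an endpoint of $c$, this controls $d(y, c)$. The reverse inclusion of the $G$-vertices of $c$ into $N_B(\alpha)$ with respect to $d_S$ is the substantive part. The strategy is: for each $G$-vertex $g_i$ of $c$, apply Lemma~\ref{mainlemma} to the $\hat{\Gamma}$-geodesic subpath of $c$ from $h_0$ to $g_i$ and to a carefully constructed comparison $\hat{\Gamma}$-path $p$ from $h_0$ to $g_i$ built from $\alpha$. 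This path $p$ is assembled by following $\alpha$ from $h_0$ up to some $\alpha(t)$ lying deep in $N_R(g^*P^*)$, making a $d_S$-short excursion of length $\leq R$ to a point $y_t \in g^*P^*$, and then traversing the peripheral edges through $v_{g^*P^*}$ to reach $g_i$. Lemma~\ref{mainlemma} then forces each such $g_i$ to lie within $d_S$-distance $K \log_2 \abs{p} + K$ of $p$, and hence close to $\alpha$.

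The main obstacle is controlling those $G$-vertices of $c$ that may lie in peripheral cosets other than $g^*P^*$, which can be introduced by peripheral shortcuts along $c$ when $R$ is large. To deal with this I would invoke the quasi-convexity of peripheral cosets (Lemma~\ref{thm:PeripheralQuasiconvex}) together with the transition-point structure (Lemma~\ref{lemma5b}) to localize the $G$-vertices of $c$ either near $h_0$ or near the entry vertex $g_k \in g^*P^*$. Both locations are then shown to be $d_S$-close to $\alpha$: the first because $h_0 \in \alpha$, and the second because the infinitude of $\alpha$ as a geodesic ray in $N_R(g^*P^*)$ forces $g_k$ to lie within bounded $d_S$-distance of the initial segment of $\alpha$, by comparing $\alpha$ with a $\Gamma(G,S)$-geodesic from $h_0$ to $g_k$ and again applying Lemma~\ref{lemma5b}.
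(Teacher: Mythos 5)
First, a point of comparison: the paper does not prove this statement at all --- it is quoted verbatim as Lemma 4.17 of \cite{MR3143594}, so there is no in-paper proof to measure your argument against. Judged on its own terms, your proposal assembles the right toolbox (Lemma \ref{mainlemma}, Lemma \ref{thm:PeripheralQuasiconvex}, Lemma \ref{lemma5b}) but misses the actual content of the lemma, which is that $B$ is \emph{uniform}: it is quantified before $R$, and the paper genuinely needs this (in the continuity proofs of Section \ref{bbvsmb}, the rays $\alpha_n$ lie in $N_{R_n}(gP)$ with $R_n$ uncontrolled in $n$, while $B$ is fixed once and for all before $t_0$ and $C$ are chosen). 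Every bound you produce depends on $R$. Your ``easier direction'' gives $d\bigl(y,v_{g^*P^*}\bigr)\leq R+1$, which is not a bound by $B$. Your main direction applies Lemma \ref{mainlemma} to a comparison path $p$ containing $\alpha\bigl([0,t]\bigr)$ plus an excursion of length $\leq R$, so the conclusion is $d_S(w,v)\leq K\log_2\abs{p}\geq K\log_2 t$ (or at best $K\log_2(R+1)$ if $t=0$); in either case the bound blows up with $R$ and with how far along $\alpha$ one must travel to get ``deep.'' A correct proof must first upgrade the hypothesis: using Lemma \ref{lemman1}(2) one finds points of $\alpha$ within the uniform constant $A$ of $g^*P^*$ at bounded gaps, and then Lemma \ref{thm:PeripheralQuasiconvex} forces a terminal subray $\alpha\bigl([s_0,\infty)\bigr)$ into a \emph{uniform} neighborhood $N_{A_1}(g^*P^*)$; only then do the distances to $v_{g^*P^*}$ become independent of $R$, and the initial segment $\alpha\bigl([0,s_0]\bigr)$ must be handled separately by comparing it with $c$ itself, not with the cone point.

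There is also a structural error in your treatment of the $G$--vertices of $c$: they are \emph{not} localized near $h_0$ and near the entry vertex $g_k\in g^*P^*$. Take $G$ free on $\{a,b,c\}$ with $\PP=\{\langle a\rangle\}$ and $h_0=bc^Nb$: the ray $\alpha$ backtracks through $bc^N,\dots,b,1$ and then runs along $\langle a\rangle$, and the $\hat{\Gamma}$--geodesic $c$ from $h_0$ to $v_{\langle a\rangle}$ has $G$--vertices spread along that entire backtrack, far from both $h_0$ and $g_k=1$. (They happen to lie on $\alpha$, which is the point of the lemma.) The correct mechanism, via Lemma \ref{lemma5b} applied to $c$ minus its final cone edge and a $\Gamma(G,S)$--geodesic $\sigma$ from $h_0$ to $g_k$, is that the $G$--vertices of $c$ lie uniformly close to the transition points of $\sigma$; one must then show that the \emph{whole} of $\sigma$, not merely its endpoints, lies uniformly $d_S$--close to $\alpha$ (using that $g_k\in g^*P^*$, that $\alpha$ eventually enters $N_{A_1}(g^*P^*)$, and relative thinness of the triangle with vertices $h_0$, $g_k$, $\alpha(s_0)$). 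As written, your argument neither establishes that comparison nor produces any $R$--independent constant, so it does not prove the statement.
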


Now, we build the map $f\colon \partial_M \Gamma\to \Delta_{\infty}(\hat{\Gamma})$ between the Morse boundary of the Cayley graph $\Gamma(G,S)$ and the infinite hyperbolic closure of $\hat{\Gamma}(G,S,\PP)$ as follows:

We fix $e$ in $\Gamma(G,S)$ as a basepoint when referring to equivalence classes of rays in $\partial_M \Gamma$. Let $[\alpha]$ be a point in $\partial_M \Gamma$. If $[\alpha]\in \bigcup_{gP\in\Pi} \partial_M{gP}$, then there is a unique peripheral left coset $g_0P_0$ such that $[\alpha]\in \partial_M{g_0P_0}$. We define $f\bigl([\alpha]\bigr)=v_{g_0P_0}$. If $[\alpha]\notin \bigcup_{gP\in\Pi} \partial_M {gP}$, then there is a geodesic ray $c$ in $\hat{\Gamma}(G,S,\PP)$ such that $c$ and $\alpha$ have the same initial point, all $G$--vertices of $c$ lie in the some neighborhood of $\alpha$ with respect to the metric $d_S$ and $\alpha$ lies in some neighborhood of the $c$ with respect to the metric $d$ (see Lemma \ref{l5}). We define $f\bigl([\alpha]\bigr)=[c]$.

\begin{lem}
The map $f$ is well-defined and $G$--equivariant.
\end{lem}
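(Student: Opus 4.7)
The plan is to prove well-definedness by splitting into the peripheral and non-peripheral cases, and then to verify $G$-equivariance directly from the definitions.

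For well-definedness in the peripheral case, I need to show that the peripheral left coset $g_0 P_0$ with $[\alpha] \in \partial_M{g_0 P_0}$ is unique. If $[\alpha] \in \partial_M{gP} \cap \partial_M{g'P'}$, then some representative ray $\alpha$ lies simultaneously in $N_R(gP)$ and $N_R(g'P')$ for some $R$. I would invoke the standard consequence of fineness of $\hat{\Gamma}(G,S,\PP)$ together with Lemma \ref{lemma2}: whenever $gP \ne g'P'$ the set $N_R(gP) \cap N_R(g'P')$ has $d_S$-diameter bounded by a constant depending only on $R$. Since $\alpha$ is an infinite ray, this forces $gP = g'P'$, so $v_{g_0 P_0}$ is unambiguous.

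For well-definedness in the non-peripheral case, I need to show that the class $[c] \in \partial \hat{\Gamma}$ does not depend on the choice of representative $\alpha$ or on the choice of geodesic ray $c$ produced by Lemma \ref{l5}. Suppose $\alpha, \alpha'$ are equivalent Morse rays (so their $d_S$-Hausdorff distance is finite) and $c, c'$ are corresponding rays in $\hat{\Gamma}$ as in Lemma \ref{l5}. The $G$-vertices of $c$ (resp.\ $c'$) lie in an $A$-neighborhood of $\alpha$ (resp.\ $\alpha'$) in the $d_S$-metric, so the $G$-vertex sequences of $c$ and $c'$ are within finite $d_S$-distance of each other, and a fortiori within finite $d$-distance since $d \le d_S$ on $G$. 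Every non-$G$-vertex on a geodesic of $\hat{\Gamma}$ is a peripheral vertex that is $d$-adjacent (at distance $1/2$) to an adjacent $G$-vertex on the same geodesic, so $c$ and $c'$ themselves have finite Hausdorff $d$-distance, which in a hyperbolic space means they represent the same point of $\partial \hat{\Gamma}$. The image genuinely lies in $\partial \hat{\Gamma}$ rather than $V_\infty(\hat{\Gamma})$ because $c$ is an infinite geodesic ray. The same argument, applied with $\alpha' = \alpha$, handles two different choices of $c$ for a single $\alpha$.

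For $G$-equivariance, note that left multiplication by $g \in G$ is a simplicial isometry of both $\Gamma(G,S)$ and $\hat{\Gamma}(G,S,\PP)$; it sends the peripheral vertex $v_{g_0 P_0}$ to $v_{g g_0 P_0}$ and induces the standard action on $\partial \hat{\Gamma}$. If $[\alpha] \in \partial_M {g_0 P_0}$, then $g\alpha$ lies in a neighborhood of $g g_0 P_0$, so $g[\alpha] \in \partial_M(g g_0 P_0)$ and $f(g[\alpha]) = v_{g g_0 P_0} = g\, f([\alpha])$. If $[\alpha]$ is non-peripheral and $c$ is its associated ray in $\hat{\Gamma}$, then $gc$ satisfies the conclusion of Lemma \ref{l5} for $g\alpha$, so $f(g[\alpha]) = [gc] = g[c] = g\, f([\alpha])$. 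The main obstacle I anticipate is the book-keeping in the non-peripheral well-definedness step, where one must carefully track how the $d_S$-tracking between $\alpha$ and the $G$-vertices of $c$ from Lemma \ref{l5} upgrades to a $d$-tracking between the full rays $c$ and $c'$; everything else is a direct consequence of known properties of relatively hyperbolic groups.
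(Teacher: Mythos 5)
Your proof is correct and follows essentially the same route as the paper's: uniqueness of the peripheral left coset for peripheral limit points, Lemma~\ref{l5} plus a finite--Hausdorff--distance argument in the coned-off metric for non-peripheral ones, and a direct check of $G$--equivariance via the isometric action on both graphs; you in fact supply more detail than the paper does on the coarse-intersection argument for uniqueness of the coset. One small imprecision: Lemma~\ref{l5} only guarantees that $\alpha'$ lies $d$--close (not $d_S$--close) to $c'$, so the $G$--vertex sets of $c$ and $c'$ need not be within finite $d_S$--Hausdorff distance as you assert; however, the chain from $c$ to its $G$--vertices to $\alpha$ to $\alpha'$ to $c'$ still bounds the Hausdorff distance in the metric $d$, which is all that is needed to conclude $[c]=[c']$.
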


\begin{proof}
Suppose $[\alpha_1]$= $[\alpha_2]$ in $\partial_M \Gamma$, where $\alpha_1$ and $\alpha_2$ are two Morse geodesic rays in $\Gamma(G,S)$ with the same endpoint $e$. If one of them belongs to $\bigcup_{gP\in\Pi} \partial_M{gP}$\ then there is a unique peripheral left coset $g_0P_0$ such that $[\alpha_1] = [\alpha_2] \in \partial_M{g_0P_0}$. Therefore, $f\bigl([\alpha_1]\bigr)= f\bigl([\alpha_2]\bigr) = v_{g_0P_0}$. Suppose that [$\alpha_1$]= [$\alpha_2$] lies in $\partial_M \Gamma-\bigcup_{gP\in\Pi}\partial_M{gP}$. Let $c_1$ be a geodesic ray in $\hat{\Gamma}(G,S,\PP)$ such that $c_1$ and $\alpha_1$ have the same initial point, all $G$--vertices of $c_1$ lie in the some neighborhood of $\alpha_1$ with respect to the metric $d_S$ and $\alpha_1$ lies in some neighborhood of the $c_1$ with respect to the metric $d$. We choose a similar geodesic $c_2$ in $\hat{\Gamma}(G,S,\PP)$ for $\alpha_2$. This implies that the Hausdorff distance between $c_1$ and $c_2$ is finite with respect to the metric $d$. Therefore, $[c_1]=[c_2]$. This implies that the $f$ is well-defined. Also, the group $G$ acts geometrically on both Cayley graph $\Gamma(G,S)$ and coned-off Cayley graph $\hat{\Gamma}(G,S,\PP)$, $g\bigl(\partial_M hP\bigr)=\partial_M ghP$ and $gv_{hP}=v_{ghP}$ for each element $g$ in $G$ and each peripheral left coset $hP$. Therefore, the map $f$ is $G$--equivariant.
\end{proof}

\begin{lem}
The map $f$ maps $\partial_M\Gamma-\bigcup_{gP\in\Pi}\partial_M{gP}$ injectively into $\partial \hat{\Gamma}$.
\end{lem}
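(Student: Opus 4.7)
The plan is to break the statement into two independent claims: (a) on non-peripheral classes $[\alpha]$, the value $f([\alpha])$ lies in $\partial\hat\Gamma$ rather than at a parabolic point; and (b) $f$ restricted to $\partial_M\Gamma-\bigcup_{gP\in\Pi}\partial_M{gP}$ is injective. Claim (a) is essentially by construction: the definition of $f$ on a non-peripheral class produces, via Lemma \ref{l5}, a genuine geodesic \emph{ray} $c$ in $\hat\Gamma$ starting at $e$, and by definition $[c]\in\partial\hat\Gamma$.

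For claim (b), suppose $[\alpha_1],[\alpha_2]$ are non-peripheral with $f([\alpha_1])=f([\alpha_2])$. Let $c_1,c_2$ be the associated geodesic rays in $\hat\Gamma$, so $[c_1]=[c_2]$ in $\partial\hat\Gamma$. Since $\hat\Gamma$ is $\delta$-hyperbolic, Lemma \ref{l14} gives a uniform bound on the $d$-Hausdorff distance between $c_1$ and $c_2$, and then Lemma \ref{l8} gives a uniform bound $K$ on the $d_S$-Hausdorff distance between the $G$-vertex sequences $(g_n)\subset c_1$ and $(g'_n)\subset c_2$. The first half of Lemma \ref{l5} furnishes constants and points $x_n\in\alpha_1$, $y_n\in\alpha_2$ with $d_S(g_n,x_n)\leq A$ and $d_S(g'_n,y_n)\leq A$, and rearranging pairs yields $d_S(x_n,y_n)\leq 2A+K$ for a cofinal sequence.

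The key observation making these pairs useful is the inequality $\hat{d}(u,v)\leq d_S(u,v)$ for any two $G$--vertices $u,v$ (since $\hat\Gamma$ is obtained from $\Gamma(G,S)$ by adding vertices and edges only). Because $\hat{d}(e,g_n)\to\infty$ along the ray $c_1$, we get $d_S(e,g_n)\to\infty$ and hence $d_S(e,x_n)\to\infty$; likewise $d_S(e,y_n)\to\infty$. Parametrizing $\alpha_1,\alpha_2$ by arc length, we obtain $t_n,s_n\to\infty$ with $x_n=\alpha_1(t_n)$, $y_n=\alpha_2(s_n)$, and $|t_n-s_n|\leq 2A+K$ by the triangle inequality.

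To upgrade from closeness at the points $x_n,y_n$ to finite Hausdorff distance between the whole rays, I would invoke the Morse property of $\alpha_1$. For each $n$, concatenate $\alpha_2|_{[0,s_n]}$ with a geodesic from $\alpha_2(s_n)=y_n$ to $\alpha_1(t_n)=x_n$; the result is a $(K',L')$--quasi-geodesic (with constants depending only on $2A+K$) from $e$ to $x_n$, both lying on $\alpha_1$. By the Morse gauge of $\alpha_1$, this path stays in a uniform neighborhood of $\alpha_1$, so $\alpha_2|_{[0,s_n]}\subset\mathcal{N}_{M'}(\alpha_1)$; letting $n\to\infty$ shows $\alpha_2\subset\mathcal{N}_{M'}(\alpha_1)$, and by symmetry (using the Morse gauge of $\alpha_2$) the reverse inclusion holds. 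Hence $\alpha_1$ and $\alpha_2$ are at finite Hausdorff distance, proving $[\alpha_1]=[\alpha_2]$. The main obstacle is the last step: the $G$-vertex tracking only produces a sparse set of aligned points, so without the Morse hypothesis one could not conclude that the rays themselves fellow travel; this is where the restriction to Morse classes is essential.
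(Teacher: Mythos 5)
Your proposal is correct and follows essentially the same route as the paper: pass to the associated rays $c_1,c_2$ in $\hat\Gamma$, apply Lemma \ref{l8} to get a uniform $d_S$--bound between their $G$--vertex sequences, transfer this to points on $\alpha_1,\alpha_2$ via Lemma \ref{l5}, and use the Morse property of $\alpha_1,\alpha_2$ to promote coarse agreement at a cofinal sequence of points to finite Hausdorff distance between the rays. The only differences are cosmetic: your invocation of Lemma \ref{l14} is redundant (Lemma \ref{l8} only needs the rays to be equivalent), and you usefully spell out the quasi-geodesic concatenation argument that the paper compresses into its final sentence.
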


\begin{proof}
Suppose that $f\bigl([\alpha_1]\bigr)=f\bigl([\alpha_2]\bigr)$, where $\alpha_1$ and $\alpha_2$ are two Morse geodesic rays in $\Gamma(G,S)$ with the same endpoint $e$ such that both $\alpha_1$ and $\alpha_2$ do not lie in any finite neighborhood of any peripheral subgroup. Let $c_1$ be a geodesic ray in $\hat{\Gamma}(G,S,\PP)$ such that $c_1$ and $\alpha_1$ have the same initial point, all $G$--vertices of $c_1$ lie in the some neighborhood of $\alpha_1$ with respect to the metric $d_S$ and $\alpha_1$ lies in some neighborhood of the $c_1$ with respect to the metric $d$. We choose a similar geodesic $c_2$ in $\hat{\Gamma}(G,S,\PP)$ for $\alpha_2$. Then, $f\bigl([\alpha_1]\bigr)=[c_1]$ and $f\bigl([\alpha_2]\bigr)=[c_2]$. Therefore, $c_1$ and $c_2$ are two equivalent geodesic rays in $\hat{\Gamma}(G,S,\PP)$. By Lemma \ref{l8}, the Hausdorff distance with respect to the metric $d_S$ between all $G$--vertices of $c_1$ and all $G$--vertices of $c_2$ is finite. Also, all $G$--vertices of $c_1$ lie in the some neighborhood of $\alpha_1$, all $G$--vertices of $c_2$ lie in the some neighborhood of $\alpha_2$ with respect to the metric $d_S$, and $\alpha_1$, $\alpha_2$ are Morse geodesics rays. Therefore, the Hausdorff distance with respect to the metric $d_S$ between $\alpha_1$ and $\alpha_2$ is finite. This implies that $[\alpha_1] = [\alpha_2]$. 
\end{proof}

We now prove the map $f$ is continuous. Since we equip $\partial_M \Gamma$ with the direct limit topology induced by the collection of spaces $\{\partial_M^N \Gamma\}_{N\in\mathcal{M}}$, it is sufficient to show the restriction of $f$ on each $\partial_M^N \Gamma$ is continuous. 

\begin{prop}
The restriction of $f$ on each $\partial_M^N \Gamma$ is continuous.
\end{prop}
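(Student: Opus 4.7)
The plan is to verify sequential continuity. Fix the Morse gauge $N$ and a convergent sequence $[\alpha_n]\to[\alpha]$ in $\partial_M^N\Gamma$; after passing to a subsequence and relabelling, we may choose $N$--Morse representatives $\alpha_n,\alpha$ based at $e$ with $\alpha_n\to\alpha$ uniformly on compact subsets of $[0,\infty)$. Writing $a:=f([\alpha])$, the goal is to show $f([\alpha_n])\to a$ in $\Delta_\infty(\hat\Gamma)$.

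The topology of $\partial(G,\PP)$ admits, for any fixed $(\lambda_0,c_0)$, the basis $\{M_{(\lambda_0,c_0)}(a,A)\}$, so it suffices to produce universal constants $\lambda_0,c_0$ (depending only on $N$ and on the data $(G,\PP,S)$) such that for every finite vertex set $A\subset V(\hat\Gamma)$ with $a\notin A$, there is $n_0$ so that for all $n\ge n_0$ some $(\lambda_0,c_0)$--quasi-geodesic in $\hat\Gamma$ runs from $a$ to $f([\alpha_n])$ while avoiding $A$. Such a quasi-geodesic will be built by concatenating three pieces: a long initial portion of a \emph{shadow} of $\alpha$ in $\hat\Gamma$, a short bridge supplied by uniform convergence on compacta, and a shadow of the tail of $\alpha_n$.

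The shadows are produced by Lemmas \ref{l5} and \ref{l15}: for any $N$--Morse ray $\beta$ from a $G$--vertex $v$ in $\Gamma$, either $\beta$ fails to lie in any bounded neighborhood of a peripheral coset and Lemma \ref{l5} yields a geodesic ray in $\hat\Gamma$ from $v$ whose $G$--vertices track $\beta$ (in $d_S$) and which is tracked by $\beta$ (in $d$), or $\beta\subset N_R(g_0P_0)$ and Lemma \ref{l15} gives a finite geodesic in $\hat\Gamma$ from $v$ to $v_{g_0P_0}$ with the analogous tracking; in both cases the tracking constants are universal. Applying this to $\alpha$ produces a path $c_\alpha$ in $\hat\Gamma$ from $e$ to $a$. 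Using hyperbolicity of $\hat\Gamma$ together with Lemma \ref{l14}, one chooses $T_0$ so large that every geodesic in $\hat\Gamma$ from $a$ to any vertex within a uniformly bounded $d$--distance of the tracking image of $\alpha(T_0)$ avoids the finite set $A$; this is possible because $A$ is finite and in the peripheral case $c_\alpha$ terminates at $v_{g_0P_0}\notin A$ while in the non-peripheral case the tail of $c_\alpha$ escapes every compact. By uniform convergence on compacta, for $n$ large enough the distance $d_S\bigl(\alpha(T_0),\alpha_n(T_0)\bigr)\le 1$, giving a $d$--bounded bridge between these two points; applying the shadow construction to the tail of $\alpha_n$ starting at $\alpha_n(T_0)$ yields the remaining path to $f([\alpha_n])$. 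Concatenation then produces the required quasi-geodesic.

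The main obstacle is uniformity in $n$. The tail of $\alpha_n$ may change type (peripheral versus non-peripheral, and the relevant peripheral coset may itself vary) as $n$ grows, so one must check that the constants emerging from Lemmas \ref{l5} and \ref{l15}, as well as the bounds from Lemmas \ref{lemma5a} and \ref{lemma5b} controlling how deeply an $N$--Morse ray may penetrate any peripheral neighborhood before leaving it, depend only on $N$ and not on $n$. Once this uniform dependence is in hand, verifying that the concatenation is a $(\lambda_0,c_0)$--quasi-geodesic for universal $\lambda_0,c_0$ and that it avoids $A$ (by the choice of $T_0$) becomes a routine tripartite computation.
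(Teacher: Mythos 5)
Your overall architecture matches the paper's: verify sequential continuity on $\partial_M^N\Gamma$, pass to a subsequence converging uniformly on compacta, split into cases according to whether the limit (and each $f([\alpha_n])$) is peripheral or non-peripheral, and use the shadow rays of Lemmas \ref{l5} and \ref{l15} together with a short bridge at a well-chosen depth $T_0$. Your worry about uniformity in $n$ is actually a non-issue: the tracking constants $A$ and $B$ in those lemmas are universal, independent of the ray, so nothing there depends on $n$.

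The genuine gap is elsewhere, in the non-peripheral case: the three-piece concatenation you describe is \emph{not} a $(\lambda_0,c_0)$--quasi-geodesic with constants independent of $n$. Write $a=f([\alpha])\in\partial\hat\Gamma$ and $b_n=f([\alpha_n])$. Your path runs from $a$ inward along the shadow of $\alpha$ to a junction at the fixed depth $T_0$, crosses the bridge, and runs back out to $b_n$. But the whole point of the proposition is that $b_n\to a$, so the Gromov product $(a\cdot b_n)_e$ tends to infinity; for large $n$ the two shadow rays fellow-travel in $\hat\Gamma$ far beyond depth $T_0$, and your concatenation travels inward and back outward through a fellow-traveling region of unbounded length. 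Two points at parameter roughly $(a\cdot b_n)_e$ on the two rays are within $O(\delta)$ of each other in $\hat\Gamma$, yet your path between them has length about $2\bigl((a\cdot b_n)_e-T_0\bigr)$, which is unbounded in $n$. So no fixed $(\lambda_0,c_0)$ works, and the claim that $b_n\in M_{(\lambda_0,c_0)}(a,A)$ does not follow from this path. (Your stated condition on $T_0$ also concerns geodesics from $a$ to \emph{vertices} near the image of $\alpha(T_0)$, whereas the relevant geodesic ends at the ideal point $b_n$.) The paper sidesteps this precisely by not building a quasi-geodesic in the non-peripheral case: it takes an arbitrary genuine geodesic $\pi$ from $f(x)$ to $f(x_n)$, locates points $z_1\in\pi$, $z_1'\in\pi$ within $R(\delta,\sigma)$ of the far-out vertices $g_0$, $g_1$ on the two shadow rays (Lemma \ref{l14}), and computes that the closest point of $\pi$ to $e$ is at distance greater than $\sigma\geq\max_{v\in D}d(e,v)$, so $\pi$ misses $D$ and $f(x_n)\in M\bigl(f(x),D\bigr)$ for the plain geodesic basis. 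Your concatenation idea is sound, and is exactly what the paper does, only in the peripheral case, where the piece attached to $a=v_{g_0P_0}$ is a single cone edge plus a path of length at most $R+1$, so the concatenation is a geodesic with a bounded tail and hence a $(1,R+2)$--quasi-geodesic. To repair your argument you would need to replace the non-peripheral construction with a thin-triangle estimate on the actual geodesic, as the paper does.
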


\begin{proof}
We prove the restriction of $f$ on each $\partial_M^N \Gamma$ is continuous by showing that the preimages of the closed sets in $\Delta_{\infty}(\hat{\Gamma})$ via the map $f_{|\partial_M^N \Gamma}$ are closed in $\partial_M^N \Gamma$. Let $F$ be an arbitrary closed set in $\Delta_{\infty}(\hat{\Gamma})$. We will prove $f^{-1}_{|\partial_M^N \Gamma}(F)$ is closed in $\partial_M^N \Gamma$ by showing that for each sequence $\{x_n\}$ in $f^{-1}_{|\partial_M^N \Gamma}(F)$ that converges to $x$ in $\partial_M^N \Gamma$, then $x$ lies in $f^{-1}_{|\partial_M^N \Gamma}(F)$ (i.e. $f_{|\partial_M^N \Gamma}(x)$ lies in $F$). Since the sequence $\{x_n\}$ converges to $x$, there exists $N$--Morse geodesic rays $\alpha_n$ with $\alpha_n(0) = e$ and $[\alpha_n] = x_n$ such that every subsequence of ${\alpha_n}$ contains a subsequence that converges uniformly on compact sets to a geodesic ray $\alpha$ with $[\alpha]=x$. By passing to some subsequence we may assume that ${\alpha_n}$ converges uniformly on compact sets to a geodesic ray $\alpha$. We now prove $x$ lies in $f^{-1}_{|\partial_M^N \Gamma}(F)$ by using the closeness of $F$ and showing that the sequence $f(x_n)$ converges to $f(x)$. The following two lemmas will help us finish the proof of this proposition. 
\end{proof}

\begin{lem}
If $x$ is a non-peripheral limit point, then $f(x_n)$ converges to $f(x)$.
\end{lem}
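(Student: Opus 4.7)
The goal is to show $f(x_n) \to [c]$ in $\Delta_\infty(\hat\Gamma)$, where $c$ is the geodesic ray in $\hat\Gamma$ from $e$ associated to $\alpha$ by Lemma \ref{l5}, so $f(x) = [c] \in \partial\hat\Gamma$. The Bowditch topology at $[c]$ has a basis of sets $M([c],F)$ with $F \subset V(\hat\Gamma)$ finite and $[c] \notin F$, so it suffices to show that for each such $F$, eventually some geodesic in $\hat\Gamma$ from $[c]$ to $f(x_n)$ avoids $F$. By the $\delta$-hyperbolicity of $\hat\Gamma$, this will follow once I establish that the Gromov products $(f(x_n) \mid [c])_e$ computed in $\hat\Gamma$ tend to infinity: the standard thin-triangle estimate then forces any geodesic realising the pair $([c], f(x_n))$ to make its closest approach to $e$ at $d$-distance $\geq (f(x_n) \mid [c])_e - O(\delta)$, eventually exceeding $\max_{v \in F} d(e, v)$.

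For each $n$, attach a path $c_n$ in $\hat\Gamma$ from $e$ to $f(x_n)$: take $c_n$ as the geodesic ray from Lemma \ref{l5} if $x_n$ is non-peripheral, or as the geodesic segment from Lemma \ref{l15} ending at $v_{g_n P_n}$ if $x_n \in \partial_M g_n P_n$. A uniform constant $B$ then makes each $G$-vertex of $c_n$ lie within $d_S$-distance $B$ of $\alpha_n$, and places $\alpha_n$ within $d$-distance $B$ of $c_n$, and analogously for $c$ and $\alpha$. The core claim is that for every $K > 0$ and for $n$ sufficiently large, the portions of $c$ and $c_n$ within $d$-distance $K$ of $e$ are Hausdorff close in $d$ by a constant depending only on $B$, $\delta$, and the relatively hyperbolic data. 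By Lemma \ref{lemma5b}, the $G$-vertices of $c$ (resp.\ $c_n$) are at $d_S$-Hausdorff distance at most $L$ from the $(\epsilon,R)$-transition points of $\alpha$ (resp.\ $\alpha_n$). Choose $T$ so large that every $G$-vertex of $c$ within $d$-distance $K$ of $e$ corresponds to a transition point $\alpha(s)$ with $s \in [R, T-R]$. Uniform convergence $\alpha_n \to \alpha$ on $d_S$-compact sets then gives, for $n$ large, coincidence of $\alpha$ and $\alpha_n$ as edge paths on $[0, T]$; since the $(\epsilon,R)$-deep/transition classification of $\alpha(s)$ depends only on $\alpha \cap B_R(\alpha(s)) = \alpha|_{(s-R, s+R)}$, the transition points of $\alpha$ and of $\alpha_n$ in $[R, T-R]$ agree. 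Pulling back through Lemma \ref{lemma5b} yields initial $d_S$- (hence $d$-) closeness of the $G$-vertex sequences of $c$ and $c_n$, giving the required fellow-travelling and hence $(f(x_n) \mid [c])_e \geq K - O(\delta, B)$.

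The main obstacle lies in the peripheral case: when $x_n \in \partial_M g_n P_n$ the path $c_n$ is finite of $d$-length $d(e, v_{g_n P_n})$, so the tracking claim is only useful once $d(e, v_{g_n P_n}) \geq K$. I verify this by ruling out bounded accumulation. Only finitely many peripheral cosets $gP$ meet any bounded $d_S$-ball about $e$ (since $G$ is finitely generated), so if $d(e, v_{g_n P_n})$ remained bounded then after passing to a subsequence $g_n P_n \equiv gP$ would be a single coset. But by Theorem \ref{i4} the subspace $\partial_M gP$ embeds topologically into $\partial_M G$, and Cordes's compactness of $\partial_M^N P$ together with the Hausdorffness of $\partial_M^N G$ (both from \cite{MC}) show that $\partial_M gP \cap \partial_M^N G$ is closed in $\partial_M^N G$; the convergence $x_n \to x$ with $x_n$ in this closed set would then force $x \in \partial_M gP$, contradicting the non-peripherality of $x$. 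Hence $d(e, v_{g_n P_n}) \to \infty$, and the tracking argument applies uniformly to yield $(f(x_n) \mid [c])_e \to \infty$, completing the proof.
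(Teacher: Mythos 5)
Your overall strategy coincides with the paper's: both proofs reduce convergence in $\Delta_\infty(\hat{\Gamma})$ to showing that $c$ and $c_n$ pass uniformly close to each other at a point whose $d$--distance from $e$ can be made arbitrarily large, and then invoke hyperbolicity of $\hat{\Gamma}$ (you via Gromov products, the paper via the explicit stability estimate of Lemma \ref{l14} applied to a geodesic $\pi$ joining $f(x)$ and $f(x_n)$). Where you diverge is in how that far-out close pair is produced. The paper does it in one step: pick a $G$--vertex $g_0$ on $c$ far from $e$, let $t_0$ satisfy $d_S(\alpha(t_0),g_0)<A$, use uniform convergence only at the single time $t_0$ to get $d_S(\alpha_n(t_0),\alpha(t_0))<1$, and then use the tracking of Lemma \ref{l5} (or \ref{l15}) to find $g_1\in c_n$ with $d(g_0,g_1)\leq 2A+1$ (resp. $\leq A+B+1$). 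You instead route through Lemma \ref{lemma5b} and a matching of $(\epsilon,R)$--transition points of $\alpha$ and $\alpha_n$ on an initial segment. This works in the one direction you actually need (a far-out vertex of $c$ is $d_S$--close to a transition point of $\alpha$ at parameter in $[R,T-R]$, which is also a transition point of $\alpha_n$, hence $d_S$--close to a vertex of $c_n$), but it is heavier than necessary: it requires the ray version of Lemma \ref{lemma5b} (the paper packages this as Lemma \ref{l5}) and literal coincidence of $\alpha_n$ and $\alpha$ on $[0,T]$, which is more than uniform convergence formally gives (though it does hold at integer times since vertices are $1$--separated).

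The one step I would not accept as written is the closedness argument in your last paragraph. To conclude that $\partial_M gP\cap\partial_M^N G$ is closed from compactness of $\partial_M^{N'}P$, you need a \emph{single} gauge $N'$ with $\partial_M gP\cap\partial_M^N G$ contained in the image of $\partial_M^{N'}P$; but an element of $\partial_M gP$ is only required to have a representative in $N_R(gP)$ for \emph{some} $R$ depending on the element, and nothing you cite bounds $R$, hence $N'$, uniformly in terms of $N$. Fortunately the conclusion you want, $d(e,v_{g_nP_n})\to\infty$, is immediate from machinery you have already set up: by Lemma \ref{l15} the segment $c_n$ passes within $d$--distance $B$ of $\alpha_n(t_0)$, and $\alpha_n(t_0)$ is within $d$--distance $1$ of $\alpha(t_0)$, which is $d$--far from $e$ because $c$ tracks $\alpha$ and $g_0$ was chosen far out; since $c_n$ is a $\hat{\Gamma}$--geodesic from $e$ to $v_{g_nP_n}$, its endpoint is at least that far from $e$. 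This is exactly how the paper disposes of the peripheral case (``by using a similar argument as above''), so replace the compactness detour with that observation and your proof is sound.
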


\begin{proof}
Let $M\bigl(f(x),D\bigr)$ be a neighborhood of $f(x)$, where $D$ is a finite subset of $V(\hat{\Gamma})$. We need to prove that there is a positive integer $n_0$ such that $f(x_n)$ lies in $M\bigl(f(x),D\bigr)$ for each $n>n_0$. 

Let $A$ be the constant in Lemma \ref{l5} and $B$ the constant in Lemma \ref{l15}. Let $\sigma =\max\set{d(e,a)}{a\in D}$. Let $R=R(\delta,\sigma)$ be the constant in Lemma \ref{l14}, where $\delta$ is the hyperbolic constant of $\hat{\Gamma}(G,S,\PP)$. Let $c$ be a geodesic in $\hat{\Gamma}(G,S,\PP)$ with the initial point $e$ such that all $G$--vertices of $c$ lie in the $A$--neighborhood of $\alpha$ with respect to the metric $d_S$ and $\alpha$ lies in the $A$--neighborhood of the $c$ with respect to the metric $d$. Then $f(x)=[c]$ by the construction of $f$. Let $g_0$ be a $G$--vertex in $c$ such that $d(g_0,e)\geq 3A+3B+4R+\sigma+2$ and let $t_0$ in $[0,\infty)$ such that $d_S\bigl(\alpha(t_0),g_0\bigr)<A$. Let $n_0$ be a positive number such that $d_S\bigl(\alpha_n(t_0),\alpha(t_0)\bigr)<1$ for each $n>n_0$. We now prove that $f(x_n)$ lies in $M\bigl(f(x),D\bigr)$ for $n>n_0$. In fact, let $\pi$ be a geodesic in $\hat{\Gamma}(G,S,\PP)$ connecting $f(x_n)$ and $f(x)$, we need to prove that $\pi \cap D=\emptyset$.

We first assume that $x_n$ is a non-peripheral limit point. Let $c_n$ be a geodesic in $\hat{\Gamma}(G,S,\PP)$ with the initial point $e$ such that all $G$--vertices of $c_n$ lie in the $A$--neighborhood of $\alpha_n$ with respect to the metric $d_S$ and $\alpha_n$ lies in the $A$--neighborhood of the $c_n$ with respect to the metric $d$. Then $f(x_n)=[c_n]$ by the construction of $f$. Let $g_1$ be a point in $c_n$ such that the distance between $g_1$ and $\alpha_n(t_0)$ is less than $A$ with respect to the metric $d$. Therefore, the distance between $g_0$ and $g_1$ is less than $2A+1$ with respect to the metric $d$. We now assume for the contradiction that $\pi \cap D\neq\emptyset$. Then we could choose $z$ in $\pi\cap D$ such that $d(e,z)=d(e,\pi)\leq \sigma$. We could consider $\pi$ as the union of two rays $\pi^+$, $\pi^-$ with the same initial point $z$ such that $[\pi^+]=f(x)$ and $[\pi^-]=f(x_n)$. 

Choose $z_1$ in $\pi^+$ and $z'_1$ in $\pi^-$ such that $d(g_0, z_1)\leq R$ and $d(g_1, z'_1)\leq R$. Obviously, $z$ lies between $z_1$ and $z'_1$ (i.e., $d(z'_1,z_1)=d(z'_1,z)+d(z,z_1)$). This implies that \[d(z,e)\geq d(g_0,e)-d(g_0,z_1)-d(z_1, z)\geq d(g_0,e)-d(z_1,z)-R\]

and \[d(z,e)\geq d(g_1,e)-d(g_1,z'_1)-d(z'_1, z)\geq d(g_1,e)-d(z'_1,z)-R.\]
Also, \[d(z'_1,z_1)=d(z'_1,z)+d(z,z_1).\] Then, \[2d(z,e)\geq d(g_0,e)+d(g_1,e)-d(z_1,z'_1)-2R.\] 

We have \[d(g_0,e)+d(g_1,e)\geq 2d(g_0,e)-d(g_0,g_1)\geq 2d(g_0,e)-2A-1\]
and \[d(z_1,z'_1)\leq d(z_1,g_0)+d(g_0,g_1)+d(g_1,z'_1)\leq 2A+2R+1.\]
Therefore, \[2d(z,e)\geq 2d(g_0,e)-4A-4R-2>2\sigma.\]
This is a contradiction. Thus, $\pi \cap D=\emptyset$ and $f(x_n)$ lies in $M\bigl(f(x),D\bigr)$.

We now assume that $x_n$ is a peripheral limit point. Then $f(x_n)=v_{gP}$, where $\alpha_n$ lies in some neighborhood of the peripheral left coset $gP$. Let $c_n$ be a geodesic in $\hat{\Gamma}(G,S,\PP)$ connecting $e$ and $v_{gP}$. By Lemma \ref{l15}, all $G$--vertices of $c_n$ lie in the $B$--neighborhood of $\alpha_n$ with respect to the metric $d_S$ and $\alpha_n$ lies in the $B$--neighborhood of the $c_n$ with respect to the metric $d$. By using a similar argument as above, we can prove that $\pi \cap D=\emptyset$ and $f(x_n)$ lies in $M\bigl(f(x),D\bigr)$.
\end{proof}

\begin{lem}
If $x$ is a peripheral limit point, then $f(x_n)$ converges to $f(x)$.
\end{lem}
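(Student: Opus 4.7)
The plan is to establish the convergence $f(x_n)\to v_{gP}$ by testing it against the basis of neighborhoods $M_{(\lambda,c)}(v_{gP},D)$ provided by the Bowditch lemma cited earlier in this section. Concretely, for a fixed finite set $D\subset V(\hat\Gamma)$ with $v_{gP}\notin D$ and fixed constants $\lambda\geq 1$, $c\geq 0$, I aim to exhibit for every sufficiently large $n$ a $(\lambda,c)$-quasi-geodesic in $\hat\Gamma$ from $v_{gP}$ to $f(x_n)$ that meets no vertex of $D$. Since $[\alpha]\in\partial_M gP$, after enlarging $R_0$ if needed the representative $\alpha$ itself satisfies $\alpha\subset N_{R_0}(gP)$; set $\sigma_S=\max\{d_S(e,a):a\in D\cap G\}$ and fix a parameter $t_0$ much larger than $\sigma_S$ and $R_0$. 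The uniform-on-compacts convergence $\alpha_n\to\alpha$ gives $d_S(\alpha_n(t),\alpha(t))<1$ on $[0,t_0]$ for all sufficiently large $n$, so $\alpha_n(t_0)$ lies within $R_0+1$ of $gP$ in the metric $d_S$.

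The quasi-geodesic $\pi_n$ I would build runs through three stages. First is the length-$1/2$ cone edge from $v_{gP}$ to a vertex $y_n\in gP$ closest in $d_S$ to $\alpha(t_0)$, so that $d_S(y_n,\alpha(t_0))\leq R_0$. Second is a geodesic in $\Gamma(G,S)$ of length at most $R_0+1$ from $y_n$ to $\alpha_n(t_0)$. Third is the subray $\alpha_n|_{[t_0,\infty)}$ viewed inside $\hat\Gamma$: in Case B (where $x_n$ is non-peripheral and $f(x_n)=[c_n]\in\partial\hat\Gamma$) this is a quasi-geodesic ray asymptotic to $f(x_n)$ by Lemma \ref{l5}; in Case A (where $x_n\in\partial_M g_nP_n$ and $f(x_n)=v_{g_nP_n}$) I would prolong it by a geodesic of length at most $R_n$ from a late point of $\alpha_n$ to a nearest vertex of $g_nP_n$, followed by the length-$1/2$ cone edge into $v_{g_nP_n}$, which is possible because $\alpha_n\subset N_{R_n}(g_nP_n)$. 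Each stage is either a genuine geodesic or a quasi-geodesic with constants controlled by $R_0$ and the constants of Lemmas \ref{l5} and \ref{l15}, so the concatenation $\pi_n$ is a $(\lambda,c)$-quasi-geodesic for a uniform choice of $(\lambda,c)$.

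The avoidance of $D$ is clear on the $G$-vertices of $\pi_n$: each such vertex lies at $d_S$-distance at least $t_0-2R_0-1$ from $e$, which exceeds $\sigma_S$ once $t_0$ is large enough, so it cannot belong to $D\cap G$. Among peripheral vertices, only $v_{gP}$ and (in Case A) $v_{g_nP_n}$ appear on $\pi_n$, and $v_{gP}\notin D$ by hypothesis. The main obstacle, and the crux of the proof, is therefore to show that in Case A the terminal peripheral vertex $v_{g_nP_n}$ also lies outside $D$ for all large $n$. If this failed, finiteness of $D$ would let me pass to a subsequence on which $g_nP_n=g^*P^*$ is a fixed coset distinct from $gP$, and then the two containments $\alpha_n\subset N_{R_n}(g^*P^*)$ and $\alpha_n|_{[0,T]}\subset N_{R_0+1}(gP)$ (valid for arbitrarily large $T$ and all sufficiently large $n$) would conflict with the bounded intersection of distinct peripheral cosets in a finitely generated relatively hyperbolic group. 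Making this last step rigorous requires leveraging the uniform Morse gauge of the sequence $\alpha_n\in\partial_M^N\Gamma$ to control the depth constants $R_n$; it is precisely this interplay of the Morse hypothesis with the bounded-intersection property that I expect to be the technical heart of the argument.
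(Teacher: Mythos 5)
Your overall strategy---testing convergence against the quasi-geodesic neighborhood basis $M_{(\lambda,c)}(v_{gP},D)$ and routing a connecting path through $\alpha(t_0)\approx\alpha_n(t_0)$ near $gP$---is exactly the paper's, but the construction of $\pi_n$ has a genuine gap: its third stage, the subray $\alpha_n|_{[t_0,\infty)}$ ``viewed inside $\hat\Gamma$,'' is not a $(\lambda,c)$--quasi-geodesic of $\hat\Gamma$ for any uniform constants. Lemma \ref{l5} does not assert that $\alpha_n$ is itself a quasi-geodesic in $\hat\Gamma$; it only provides a \emph{separate} genuine $\hat\Gamma$--geodesic whose $G$--vertices fellow-travel $\alpha_n$. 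The failure is most stark in your Case A: since $\alpha_n\subset N_{R_n}(g_nP_n)$, every point of $\alpha_n$ is within $\hat\Gamma$--distance $R_n+\tfrac12$ of the cone point $v_{g_nP_n}$, so $\alpha_n|_{[t_0,\infty)}$ is an infinite-length path of bounded diameter in $\hat\Gamma$ and cannot be a quasi-ray for any constants; even in Case B, $\alpha_n$ may have arbitrarily long excursions into peripheral neighborhoods, so the lower quasi-geodesic inequality fails without a uniform bound. The fix is the one the paper uses: take the long piece to be the genuine $\hat\Gamma$--geodesic $c'_n$ from $\alpha_n(t_0)$ to $f(x_n)$ supplied by Lemma \ref{l5} (resp.\ Lemma \ref{l15}), so that the concatenation with the short $\Gamma(G,S)$--path and the cone edge is a $(1,R+2)$--quasi-geodesic. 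But this substitution invalidates your claim that ``only $v_{gP}$ and $v_{g_nP_n}$ appear among peripheral vertices'': a $\hat\Gamma$--geodesic typically passes through many cone points, so you must additionally rule out that $c'_n$ meets a vertex of $D$.

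That last point is also where your proposed resolution of what you call the crux falls short: you defer to an unproven subsequence/bounded-coarse-intersection argument involving the depth constants $R_n$. The paper handles all of this at once and without any compactness argument, by choosing $t_0$ so large that $\alpha(t_0)$ lies outside the $(C+1)$--neighborhood of every peripheral coset named in $D$, where $C$ comes from quasiconvexity of peripheral cosets (Lemma \ref{lemma2}) applied with the constants $A$, $B$, and $r_2=\max\{d_S(e,gP):v_{gP}\in D\}$. Then if some $G$--vertex $u$ of $c'_n$ lay in a coset $hQ$ with $v_{hQ}\in D$ (in particular, if $c'_n$ passed through $v_{hQ}$, or if $f(x_n)=v_{hQ}$), both $e$ and the point of $\alpha_n$ tracking $u$ would be uniformly close to $hQ$, forcing the whole segment $\alpha_n([0,t])$---and hence $\alpha_n(t_0)$ and then $\alpha(t_0)$---into the $(C+1)$--neighborhood of $hQ$, contradicting the choice of $t_0$. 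I recommend reworking your argument along these lines: replace the raw subray by the coned-off geodesic, and replace the subsequence argument by the quantitative choice of $t_0$.
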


\begin{proof}
Since $x$ is a peripheral limit point, there is a peripheral coset $g_0P_0$ such that $\alpha$ lies in the $R$--neighborhood of $g_0P_0$ for some $R$. Therefore, $f(x)=v_{g_0P_0}$ by the construction of $f$. Let $M_{(1, R+2)}\bigl(f(x),D\bigr)$ be a neighborhood of $f(x)$, where $D$ is a finite subset of $V(\hat{\Gamma})$ that does not contain $v_{g_0P_0}$. We need to prove that there is a positive integer $n_0$ such that $f(x_n)$ lies in $M_{(1, R+2)}\bigl(f(x),D\bigr)$ for each $n>n_0$.

Let $A$ be the constant in Lemma \ref{l5} and $B$ the constant in Lemma \ref{l15}. Let $D_1$ be the set of all $G$--vertices in $D$ and $D_2$ the set of all peripheral vertex in $D$. Let $r_1 =\max\set{d_S(e,a)}{a\in D_1}$ and $r_2 =\max\set{d_S(e,gP)}{v_{gP}\in D_2}$. Let $C$ be a positive constant such that each geodesic $\gamma$ in $\Gamma(G,S)$ with endpoints in $(A+B+r_2)$--neighborhood of some peripheral left coset $gP$ must lie entirely in $gP$. Let $t_0\geq R+r_1+r_2+A+B+C$ such that $\alpha(t_0)$ lies outside all $(C+1)$--neighborhoods of peripheral left cosets $gP$, where $v_{gP}$ in $D_2$. Let $n_0$ be a positive number such that $d_S\bigl(\alpha_n(t_0),\alpha(t_0)\bigr)<1$ for each $n>n_0$. We now prove that $f(x_n)$ lies in $M_{(1, R+2)}\bigl(f(x),D\bigr)$ for $n>n_0$.

We first assume that $x_n$ is a non-peripheral limit point. Let $c'_n$ be a geodesic in $\hat{\Gamma}(G,S,\PP)$ with the initial point $\alpha_n(t_0)$ such that all $G$--vertices of $c'_n$ lie in the $A$--neighborhood of $\alpha_n\bigl([t_0,\infty)\bigr)$ with respect to the metric $d_S$ and $\alpha_n\bigl([t_0,\infty)\bigr)$ lies in the $A$--neighborhood of the $c'_n$ with respect to the metric $d$. Then $c'_n$ connects $\alpha_n(t_0)$ and $f(x_n)$. We now prove that $c'_n\cap D = \emptyset$.

Assume for the contradiction that $c'_n\cap D \neq \emptyset$. Then there is a $G$--vertex $u$ in $c'_n$ such that $u\in D_1$ or $u\in gP$ for some $v_{gP}\in D_2$. Let $t\in [t_0,\infty)$ such that the distance between $u$ and $\alpha_n(t)$ is less than $A$ with respect to the metric $d_S$. If $u$ lies in $D_1$, then
\[t=d_S\bigl(\alpha_n(t),e\bigr)\leq d_S\bigl(\alpha_n(t),u\bigr)+d_S(u,e)\leq A+r_1<t_0.\]
This is a contradiction. Therefore, we now assume $u$ lies in some peripheral left coset $gP$, where $v_{gP}\in D_2$. Since $\alpha_n(0)$ and $\alpha_n(t)$ both lie in the $(A+B+r_2)$--neighborhood of $gP$, then $\alpha_n\bigl([0,t]\bigr)$ lies entirely in the $C$--neighborhood of $gP$. In particular, $\alpha_n(t_0)$ lies in the $C$--neighborhood of $gP$. Also, the distance between $\alpha_n(t_0)$ and $\alpha(t_0)$ is less than 1 with respect to the metric $d_S$. Therefore, $\alpha(t_0)$ lies in the $(C+1)$--neighborhood of $gP$. This contradicts the choice of $t_0$. Therefore, $c'_n\cap D = \emptyset$. 

Since $d_S\bigl(\alpha_n(t_0),\alpha(t_0)\bigr)<1$ and $d_S\bigl(\alpha(t_0),g_0P_0)<R$, then there is a path $\ell$ in $\Gamma(G,S)$ with length less than $R+1$ connecting $\alpha_n(t_0)$ and some point $g^*$ in $g_0P_0$. Since $\ell$ is a path in $\Gamma(G,S)$, then $\ell\cap D_2\neq \emptyset$ obviously. If $\ell\cap D_2 \neq \emptyset$, then $t_0=d_S\bigl(\alpha_n(t_0),e\bigr)\leq R+1+r_1$. This contradicts the choice of $t_0$. Therefore, $\ell \cap D=\emptyset$. Let $\pi=c'_n\cup\ell\cup e(g*,v_{g_0P_0})$ then $\pi$ is an $(1,R+2)$--quasi-geodesic connecting $f(x_n)$, $f(x)$ and $\pi\cap D=\emptyset$. Therefore, $f(x_n)$ lies in $M_{(1, R+2)}\bigl(f(x),D\bigr)$.

We now assume that $x_n$ is a peripheral limit point. Then $f(x_n)=v_{gP}$, where $\alpha_n$ lies in some neighborhood of the peripheral left coset $gP$. In this case, we let $c'_n$ be a geodesic in $\hat{\Gamma}(G,S,\PP)$ connecting $\alpha_n(t_0)$ and $v_{gP}$. By Lemma \ref{l15}, all $G$--vertices of $c'_n$ lie in the $B$--neighborhood of $\alpha_n\bigl([t_0,\infty)\bigr)$ with respect to the metric $d_S$ and $\alpha_n\bigl([t_0,\infty)\bigr)$ lies in the $B$--neighborhood of the $c'_n$ with respect to the metric $d$. By using a similar argument as above, we can prove that $f(x_n)$ lies in $M_{(1, R+2)}\bigl(f(x),D\bigr)$.
\end{proof}

\begin{thm}Let $(G,\PP)$ be a finitely generated relatively hyperbolic group. Then there is a $G$--equivariant continuous map $f$ from the Morse boundary $\partial_M G$ to the Bowditch boundary $\partial (G,\PP)$ with the following properties:
\begin{enumerate}
\item The map $f$ maps the set of non-peripheral limit points of $\partial_M G$ injectively into the set of non-parabolic points of $\partial (G,\PP)$.
\item The map $f$ maps peripheral limit points of the same type in $\partial_M G$ to the same parabolic point in $\partial (G,\PP)$.
\end{enumerate}
In particular, if the Morse boundary of each peripheral subgroup is empty, then $f$ maps the Morse boundary $\partial_M G$ injectively into the set of non-parabolic points of $\partial (G,\PP)$.
\end{thm}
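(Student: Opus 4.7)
The plan is to use the map $f$ constructed in the discussion preceding the theorem, and to verify each asserted property by assembling the lemmas above. First I would confirm that $f$ is well defined and $G$-equivariant: for a non-peripheral class $[\alpha]$, any two tracking rays $c_1, c_2$ in $\hat{\Gamma}$ produced by Lemma \ref{l5} have $G$-vertex sequences at finite $d_S$-Hausdorff distance from $\alpha$, hence from each other, so they are asymptotic in $\hat{\Gamma}$ by Lemma \ref{l8}; for a peripheral class $[\alpha] \in \partial_M g_0P_0$, the coset $g_0P_0$ is uniquely determined by the class. Equivariance is immediate from the compatibility of the $G$-actions on $\Gamma(G,S)$ and on $\hat{\Gamma}(G,S,\PP)$, together with the equalities $g \partial_M hP = \partial_M ghP$ and $g v_{hP} = v_{ghP}$.

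Property (1) is established as follows. Suppose $[\alpha_1], [\alpha_2]$ are non-peripheral limit points with $f([\alpha_1]) = f([\alpha_2])$. Choose tracking rays $c_1, c_2$ as in Lemma \ref{l5}; then $c_1$ and $c_2$ are equivalent in $\hat{\Gamma}$, and by Lemma \ref{l8} their $G$-vertex sequences are $d_S$-Hausdorff close. Since each $\alpha_i$ is $d_S$-Hausdorff close to the $G$-vertices of $c_i$, the rays $\alpha_1, \alpha_2$ are $d_S$-Hausdorff close and so define the same class in $\partial_M G$. By construction $f$ sends every non-peripheral class into $\partial \hat{\Gamma}$, the set of non-parabolic points. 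Property (2) is immediate: two peripheral limit points of the same type both map to the parabolic vertex of the common peripheral coset.

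For continuity, I would exploit the direct limit topology: it suffices to check that $f$ restricted to each stratum $\partial_M^N G$ is continuous, which I would do sequentially. Given a sequence $x_n \to x$ with $N$-Morse representatives $\alpha_n$ converging uniformly on compact sets to a representative $\alpha$ of $x$, I would verify $f(x_n) \to f(x)$ by testing against the basis neighborhoods of $f(x)$ in $\partial(G,\PP)$, splitting into two cases. When $x$ is non-peripheral, a neighborhood $M(f(x), D)$ avoids a finite vertex set $D$; using the tracking ray $c$ for $\alpha$, a deep $G$-vertex $g_0 \in c$, and thin-quadrilateral geometry in $\hat{\Gamma}$ (Lemma \ref{l14}), one shows any $\hat{\Gamma}$-geodesic from $f(x_n)$ to $f(x)$ stays far from $D$ for $n$ large. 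When $x$ is peripheral with $f(x) = v_{g_0P_0}$, I would work with the coarser $(1, R+2)$-quasi-geodesic basis $M_{(1,R+2)}(f(x), D)$ and build an explicit path from $f(x_n)$ to $v_{g_0P_0}$ by concatenating the tracking geodesic for $\alpha_n([t_0, \infty))$ (from Lemma \ref{l5} or Lemma \ref{l15}, according to whether $x_n$ is non-peripheral or peripheral), a short bridge in $\Gamma(G,S)$ from $\alpha_n(t_0)$ into $g_0P_0$, and a peripheral edge to $v_{g_0P_0}$.

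The hardest step is the peripheral case of continuity. The key idea is to choose $t_0$ simultaneously past the $d_S$-radii of the $G$-vertices in $D$ and outside the $(C+1)$-neighborhoods of all peripheral cosets indexed by $D$, where $C$ bounds the diameter of peripheral penetration of geodesics whose endpoints lie near a coset; this forces the tracking geodesic beyond $\alpha_n(t_0)$ to avoid $D$ entirely, for otherwise $\alpha_n$ (and, for $n$ large, $\alpha$) would intrude too deeply into one of the forbidden cosets. The final special case follows immediately from (1): if every peripheral subgroup has empty Morse boundary then $\partial_M G$ contains no peripheral limit points, so $f$ is injective into the non-parabolic locus.
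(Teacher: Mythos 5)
Your proposal follows essentially the same route as the paper: the same construction of $f$ via Lemmas \ref{l5} and \ref{l15}, the same use of Lemma \ref{l8} for well-definedness and for injectivity on non-peripheral limit points, and the same stratum-by-stratum continuity argument split into the non-peripheral and peripheral cases, with the same choice of deep $G$--vertex $g_0$, cutoff time $t_0$, and the constant $C$ controlling peripheral penetration. The only point worth making explicit is that in the injectivity step the passage from ``the $G$--vertices of $c_1$ and $c_2$ are mutually $d_S$--close and lie near $\alpha_1$, $\alpha_2$ respectively'' to ``$\alpha_1$ and $\alpha_2$ are $d_S$--Hausdorff close'' uses the Morse property of the rays, which the paper invokes at exactly that moment.
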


\begin{rem}
By the above theorem, the map $f$ constructed as above is injective if the Morse boundary of each peripheral subgroup is empty. However, the map $f$ is not a topological embedding in general even when the Morse boundary of each peripheral subgroup is empty. In fact, let $G$ be a finitely generated group with the presentation $G=\langle a,b,c| ab=ba\rangle$. Then the group $G$ is relatively hyperbolic with respect to the abelian subgroup $H$ generated by $a$ and $b$. Obviously, the Morse boundary of each peripheral subgroup $H$ is empty. 

Let $X_G$ be the Cayley complex of $G$ with respect to the above presentation and $\tilde{X}_G$ the universal covering space of $X_G$. It is well-known that $\tilde{X}_G$ can be equipped a $\CAT(0)$ metric and $G$ acts geometrically on $\tilde{X}_G$. For each $n$, let $\alpha_n$ be the geodesic in $\tilde{X}_G$ with initial point $e$ labeled by $c^na^nccccc\cdots$. Let $\alpha$ be a geodesic ray with initial point at $e$ labeled by $ccccc\cdots$. Obviously, the sequence $\{[\alpha_n]\}$ converges to $[\alpha]$ in the $\CAT(0)$ boundary of $\tilde{X}_G$. Therefore, the sequence $\{[\alpha_n]\}$ converges to $[\alpha]$ in the Bowditch boundary $\partial(G,\PP)$ (see Theorem 1.1 in \cite{MR3143594}). Thus, the set $F =\{[\alpha_n]\mid n\geq 1\}$ is not closed in $\partial(G,\PP)$. However, the set $F$ is closed in the Morse boundary $\partial_M G$ since $F \cap \partial_M^N \tilde{X}_G$ is finite for each $N$ in $\mathcal{M}$. This implies that $f$ is not a topological embedding. 
\end{rem}

%%%%%%%%%%%%%%%%%%%%%%%%%%%%%%%%%%%%%%%%%%%%%%%%%%%%%%%%%%%
%%            BIBLIOGRAPHY
%%%%%%%%%%%%%%%%%%%%%%%%%%%%%%%%%%%%%%%%%%%%%%%%%%%%%%%%%%%
\bibliographystyle{alpha}
\bibliography{Tran}

\def\cprime{$'$}
\begin{thebibliography}{BDM09}

\bibitem[BC12]{MR2874959}
Jason Behrstock and Ruth Charney.
\newblock Divergence and quasimorphisms of right-angled {A}rtin groups.
\newblock {\em Math. Ann.}, 352(2):339--356, 2012.

\bibitem[BDM09]{MR2501302}
Jason Behrstock, Cornelia Dru{\c{t}}u, and Lee Mosher.
\newblock Thick metric spaces, relative hyperbolicity, and quasi-isometric
  rigidity.
\newblock {\em Math. Ann.}, 344(3):543--595, 2009.

\bibitem[BHS]{BHSC}
Jason Behrstock, Mark~F. Hagen, and Alessandro Sisto.
\newblock Thickness, relative hyperbolicity, and randomness in coxeter groups.
\newblock With an appendix written jointly with Pierre-Emmanuel Caprace,
  Preprint. arXiv:1312.4789.

\bibitem[Bow12]{MR2922380}
B.~H. Bowditch.
\newblock Relatively hyperbolic groups.
\newblock {\em Internat. J. Algebra Comput.}, 22(3):1250016, 66, 2012.

\bibitem[Cap15]{MR3450952}
Pierre-Emmanuel Caprace.
\newblock Erratum to ``{B}uildings with isolated subspaces and relatively
  hyperbolic {C}oxeter groups'' [ {MR}2665193].
\newblock {\em Innov. Incidence Geom.}, 14:77--79, 2015.

\bibitem[Cor]{MC}
Matthew Cordes.
\newblock Morse boundaries of proper geodesic metric spaces.
\newblock Preprint. arXiv:1502.04376.

\bibitem[CS15]{MR3339446}
Ruth Charney and Harold Sultan.
\newblock Contracting boundaries of {$\rm CAT(0)$} spaces.
\newblock {\em J. Topol.}, 8(1):93--117, 2015.

\bibitem[Dav08]{MR2360474}
Michael~W. Davis.
\newblock {\em The geometry and topology of {C}oxeter groups}, volume~32 of
  {\em London Mathematical Society Monographs Series}.
\newblock Princeton University Press, Princeton, NJ, 2008.

\bibitem[DMS10]{MR2584607}
Cornelia Dru{\c{t}}u, Shahar Mozes, and Mark Sapir.
\newblock Divergence in lattices in semisimple {L}ie groups and graphs of
  groups.
\newblock {\em Trans. Amer. Math. Soc.}, 362(5):2451--2505, 2010.

\bibitem[DR09]{MR2563768}
Moon Duchin and Kasra Rafi.
\newblock Divergence of geodesics in {T}eichm\"uller space and the mapping
  class group.
\newblock {\em Geom. Funct. Anal.}, 19(3):722--742, 2009.

\bibitem[DS05]{MR2153979}
Cornelia Dru{\c{t}}u and Mark Sapir.
\newblock Tree-graded spaces and asymptotic cones of groups.
\newblock {\em Topology}, 44(5):959--1058, 2005.
\newblock With an appendix by Denis Osin and Sapir.

\bibitem[DT15]{MR3314816}
Pallavi Dani and Anne Thomas.
\newblock Divergence in right-angled {C}oxeter groups.
\newblock {\em Trans. Amer. Math. Soc.}, 367(5):3549--3577, 2015.

\bibitem[Ger94a]{MR1302334}
S.~M. Gersten.
\newblock Divergence in {$3$}-manifold groups.
\newblock {\em Geom. Funct. Anal.}, 4(6):633--647, 1994.

\bibitem[Ger94b]{MR1254309}
S.~M. Gersten.
\newblock Quadratic divergence of geodesics in {${\rm CAT}(0)$} spaces.
\newblock {\em Geom. Funct. Anal.}, 4(1):37--51, 1994.

\bibitem[Hru10]{Hruska10}
G.C. Hruska.
\newblock Relative hyperbolicity and relative quasiconvexity for countable
  groups.
\newblock {\em Algebr. Geom. Topol.}, 10(3):1807--1856, 2010.

\bibitem[Mac13]{MR3032700}
Nata{\v{s}}a Macura.
\newblock C{AT}(0) spaces with polynomial divergence of geodesics.
\newblock {\em Geom. Dedicata}, 163:361--378, 2013.

\bibitem[Sis]{Sisto}
Alessandro Sisto.
\newblock On metric relative hyperbolicity.
\newblock Preprint. arXiv:1210.8081.

\bibitem[Tra13]{MR3143594}
Hung~Cong Tran.
\newblock Relations between various boundaries of relatively hyperbolic groups.
\newblock {\em Internat. J. Algebra Comput.}, 23(7):1551--1572, 2013.

\bibitem[Tra15]{MR3361149}
Hung~Cong Tran.
\newblock Relative divergence of finitely generated groups.
\newblock {\em Algebr. Geom. Topol.}, 15(3):1717--1769, 2015.

\bibitem[Tra16]{MR3451473}
Hung~Cong Tran.
\newblock Divergence of {M}orse geodesics.
\newblock {\em Geom. Dedicata}, 180:385--397, 2016.

\end{thebibliography}
\end{document}